\newcommand\myshade{85}
\colorlet{mylinkcolor}{violet}
\colorlet{mycitecolor}{YellowOrange}
\colorlet{myurlcolor}{Aquamarine}
\numberwithin{equation}{section}
\newtheorem{theorem}{Theorem}[section]
\newtheorem{lemma}[theorem]{Lemma}
\theoremstyle{definition}
\newtheorem{definition}[theorem]{Definition}
\theoremstyle{remark}
\newtheorem{remark}[theorem]{Remark}
\newtheorem{ex}[theorem]{Example}
\newcommand{\R}{\mathbb{R}}
\newcommand{\C}{\mathbb{C}}
\newcommand{\N}{\mathbb N}
\numberwithin{equation}{section}
\DeclareMathOperator*{\supp}{supp}
\DeclareSymbolFont{rsfs}{U}{rsfs}{m}{n}
\DeclareSymbolFontAlphabet{\mathscr}{rsfs}
\newcommand{\Hs}{H^{s}(\R^N)}
\newcommand{\Hr}{H^{s}_r(\R^N)}
\newcommand{\ldue}{L^{2}(\R^N)}
\newcommand{\p}{\mathcal{P}}
\newcommand{\Pm}{\mathcal{P}_m}
\DeclareMathOperator*{\dist}{dist}
\begin{document}

\title{Normalized solutions for the fractional NLS with mass
  supercritical nonlinearity\footnote{Both authors are supported by
    GNAMPA, project ``Equazioni alle derivate parziali: problemi e
    modelli.''}}

\author{Luigi Appolloni\thanks{Dipartimento di Matematica e
    Applicazioni, Universit\`a degli Studi di Milano Bicocca. Email:
    \href{mailto:l.appolloni1@campus.unimib.it}{l.appolloni1@campus.unimib.it}}
  \qquad Simone Secchi\thanks{Dipartimento di Matematica e
    Applicazioni, Universit\`a degli Studi di Milano Bicocca. Email:
    \href{mailto:simone.secchi@unimib.it}{simone.secchi@unimib.it}}}

\date{\DTMnow}

\maketitle

\begin{abstract}
  We investigate the existence of solutions to the fractional
  nonlinear Schr\"{o}dinger equation \((-\Delta)^s u = f(u)-\mu u\) with
  prescribed \(L^2\)-norm \(\int_{\mathbb{R}^N} |u|^2 \, dx =m\) in
  the Sobolev space \(H^s(\mathbb{R}^N)\). Under fairly general
  assumptions on the nonlinearity \(f\), we prove the existence of a
  ground state solution and a multiplicity result in the radially
  symmetric case.
\end{abstract}


\section{Introduction}

In this paper we investigate the existence of solutions to the
fractional Nonlinear Schr\"{o}dinger Equation (NLS in the sequel)
\begin{gather}
  \label{eq:evol}
  \mathrm{i}\frac{\partial \psi}{\partial t}=(-\Delta)^s \psi -
  V(|\psi|)\psi,
\end{gather}
where $\mathrm{i}$ denotes the imaginary unit and
$\psi=\psi(x,t)\colon \R^N \times (0,\infty) \to \C$. This type of
Schr\"{o}dinger equation was introduced by Laskin in \cite{MR1948569},
and the interest in its analysis has grown over the years. An important
family of solutions, known under the name of \emph{travelling} or
\emph{standing waves}, is characterized by the \emph{ansatz}
\begin{gather} \label{eq:ansatz}
\psi(x,t)=e^{\mathrm{i} \mu t}u(x)
\end{gather}
for some (unknown) function \(u \colon \R^N \to \R\). These
solutions have the remarkable property that they conserve their mass
along time, i.e. \(\Vert \psi(t) \Vert_{L^2(\R^N)}\) is a constant function of \(t \in (0,+\infty)\).
It is therefore natural and meaningful to
seek solutions having a \emph{prescribed} \(L^2\)-norm. 

Coupling \eqref{eq:evol} with \eqref{eq:ansatz}, we arrive at the
problem
\begin{equation*}
  \begin{cases} (-\Delta)^s u = V(|u|)u - \mu u &\hbox{in}\ \R^N, \\
    \| u \|_{\ldue}^2=m, 
\end{cases}
\end{equation*}
where $s \in (0,1)$, $N > 2s$, $\mu \in \R$, \(m>0\) is a prescribed
parameter, and $(-\Delta)^s$ denotes the usual fractional
laplacian. We recall that
\begin{gather*}
  (-\Delta)^su(x) =C(N,s) \, \lim_{\epsilon \to
    0^+}\int_{\R^N\setminus B_\epsilon(0)}
  \frac{u(x)-u(y)}{|x-y|^{N+2s}} \, dy,
\end{gather*}
where
\begin{gather*}
  C(N,s)=\left( \int_{\R^N}\frac{1-\cos\zeta_1}{|\zeta|^{n+2s}} \, d
    \zeta \right)^{-1}.
\end{gather*}
For further details on the fractional laplacian we refer to
\cite{MR2944369}. For our purposes, and since the parameter \(s\) is
kept fixed, we will always work with a \emph{rescaled} fractional
operator, in such a way that $C(N,s)=1$.

In order to ease notation, we will write \(f(u)=V(|u|)u\), and study
the problem
\begin{equation} \label{eq:Pm}
\tag{$P_m$}
\begin{cases} (-\Delta)^s u = f(u) - \mu u &\hbox{in}\ \R^N, \\
  \| u \|_{\ldue}^2=m.
\end{cases}
\end{equation}
The r\^{o}le of the real number~\(\mu\) is twofold: it can either be
\emph{prescribed}, or it can arise as a \emph{suitable} parameter
during the analysis of \eqref{eq:Pm}.  In the present work we will
choose the second option, and \(\mu\) will arise as a Lagrange
multiplier.

Since we are looking for \emph{bound-state} solutions whose
\(L^2\)-norm must be finite, it is natural to build a 
variational setting for \eqref{eq:Pm}. Since this is by now standard,
we will be sketchy. We introduce the fractional Sobolev space
\begin{gather*}
  H^s(\R^N) = \left\lbrace u \in L^2 (\R^N) \mid [u]_{H^s(\R^N)}^2 <
    +\infty \right\rbrace,
\end{gather*}
where
\begin{gather*}
  \left[ u \right]_{\Hs}^2=\int_{\R^{N} \times
    \R^N}\frac{|u(x)-u(y)|^2}{|x-y|^{N+2s}} dx\, dy
\end{gather*}
is the so-called Gagliardo semi-norm. The norm in $H^s(\R^N)$ is
defined by
\begin{gather*}
  \Vert u \Vert = \sqrt{\strut \|u\|_{L^2}^2 + [u]^2_{H^s(\R^N)}},
\end{gather*}
which naturally arises from an inner product.
We then (formally) introduce the energy functional
\begin{gather*}
  I(u)=\frac{1}{2} \left[u\right]_{\Hs}^2-\int_{\R^N} F(u) \, dx
\end{gather*}
where $F(t)= \int_0^t f(\sigma) \, d\sigma$.  A standard approach for
studying \eqref{eq:Pm} consists in looking for critical points of~$I$
constrained on the sphere
\begin{gather*}
  S_m = \left\lbrace u \in H^s(\R^N) \mid \int_{\R^N} |u|^2 \, dx = m
  \right\rbrace.
\end{gather*}
The convenience of this variational approach depends strongly on the
behavior of the nonlinearity~\(f\). If \(f(t)\) grows slower than
\(|t|^{1+\frac{4s}{N}}\) as \(t \to +\infty\), then \(I\) is coercive
and bounded from below on \(S_m\): this is the \emph{mass subcritical
  case}, and the minimization problem
\begin{gather*}
\min \left\lbrace I(u) \mid u \in S_m \right\rbrace
\end{gather*}
is the natural approach.
On the other hand, if \(f(t)\) grows faster than
\(|t|^{1+\frac{4s}{N}}\) as \(t \to +\infty\) then \(I\) is unbounded
from below on \(S_m\), and we are in the \emph{mass supercritical
  case}. Since constrained minimizers of~\(I\) on~\(S_m\) cannot
exist, we have to find critical points at higher levels.

\bigskip

When \(s=1\), i.e. when the fractional Laplace operator
\((-\Delta)^s\) reduces to the \emph{local} differential operator
\(-\Delta\), the literature for \eqref{eq:Pm} is huge. The particular
case of a combined nonlinearity of power type, namely
\(f(t)=t^{p-2}+\mu t^{q-2}\) with \(2<q<p<2N/(N-2)\) has been widely
investigated. The interplay of the parameters \(p\) and \(q\) add some
richness to the structure of the problem.

The situation is different when~\(0<s<1\), and few results
are available. Feng \emph{et al.} in \cite{Feng2019NormalizedGS} deal
with particular nonlinearities. Stanislavova \emph{et al.} in
\cite{Stanislavova2020} add the further complication of a
trapping potential. In the recent paper~\cite{MR4095416} the author
proves some existence and asymptotic results for the fractional NLS
when a lower order perturbation to a mass supercritical pure power in
the nonlinearity is added. It is also worth mentioning
\cite{MR4135640}, where Zhang \emph{et al.} studied the problem when
the nonlinear term consists in the sum of two pure powers of different
order. They provide some existence and non-existence results analysing
separately what happens in the mass subcritical and supercritical case
for both the leading term and the lower order perturbation.

\medskip

Very recently, Jeanjean \emph{et al.} in~\cite{MR4150876}
provided a thorough treatment of the local case \(s=1\) via a careful
analysis based on the Pohozaev identity. In the present paper we partially
extend their results to the non-local case \(0<s<1\). Since we
deal with a fractional operator, our conditions on \(f\) must be
adapted correspondingly.

\medskip

We collect here our standing assumptions about the nonlinearity \(f\);
we recall that
\begin{gather*}
  F(t)=\int_0^tf(\sigma)\, d\sigma
\end{gather*}
and define the auxiliary function
\begin{gather*}
  \tilde{F}(t)=f(t)t-2F(t).
\end{gather*}
\begin{description}
\item[$(f_0)$] $f\colon \R \to \R$ is an odd and continuous function;
\item[$(f_1)$] $\lim\limits_{t \to 0} \dfrac{f(t)}{|t|^{1+4s/N}}=0$;
\item[$(f_2)$] $\lim\limits_{t \to +\infty} \dfrac{f(t)}{|t|^{(N+2s)/(N-2s)}}=0$;
\item[$(f_3)$] $\lim\limits_{t \to +\infty} \dfrac{F(t)}{|t|^{2 + 4s/N}}=+\infty$;
\item[$(f_4)$] The
  function~$t \mapsto \dfrac{\tilde{F}(t)}{|t|^{2+4s/N}}$ is strictly
  decreasing on $(-\infty,0)$ and strictly increasing on
  $(0, +\infty)$;
\item[$(f_5)$] $f(t)t< \frac{2N}{N-2s}F(t)$ for all
  $t \in \R \setminus \lbrace 0 \rbrace$;
\item[$(f_6)$] $\lim\limits_{t \to 0} \dfrac{tf(t)}{|t|^{2N/(N-2s)}}=+\infty$.
\end{description}
\begin{remark}
  The oddness of \(f\) is necessary in order to use the classical
  genus theory and to get a desired property on the fiber map that we
  will introduce in detail in the next section (see for instance Lemma
  \ref{lemma4} below). Assumption~$(f_2)$ guarantees a Sobolev
  subcritical growth, whereas $(f_3)$ characterises the problem as
  mass supercritical. At one point we will need $(f_5)$ to establish
  the strict positivity of the Lagrange multiplier $\mu$.
\end{remark}
\begin{ex}
As suggested in \cite{MR4150876}, an explicit example can be
constructed as follows. Set \(\alpha_{N,s}=\frac{4s^2}{N(N-2s)}\) for
simplicity, and define
\begin{gather*}
  f(t) = \left( \left( 2 + \frac{4s}{N} \right) \log \left(
      1+|t|^{\alpha_{N,s}} \right) + \frac{\alpha_{N,s} |t|^{\alpha_{N,s}}}{1+|t|^{\alpha_{N,s }}}
    \right) |t|^{\frac{4s}{N}} t
\end{gather*}
\end{ex}
We briefly outline our results. Firstly, we show that the ground state
level is attained with a strictly positive Lagrange multiplier.
\begin{theorem} \label{th1}
  Assume that $f$ satisfies $(f_0)$-$(f_5)$. Then \eqref{eq:Pm} admits a
  positive ground state for any~$m > 0$. Moreover, for any ground
  state the associated Lagrange multiplier $\mu$ is positive.
\end{theorem}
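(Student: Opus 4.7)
The plan is to adapt to the non-local setting the Pohozaev-constrained approach pioneered by Jeanjean et al.~\cite{MR4150876}. The key tool is the $L^2$-preserving scaling $(t \star u)(x) := t^{N/2} u(tx)$: differentiating the fibre map $\psi_u(t) := I(t \star u)$ at $t = 1$ and invoking the fractional Pohozaev identity, one is led to introduce
\begin{gather*}
  Q(u) := s [u]_{\Hs}^2 - \frac{N}{2}\int_{\R^N}\tilde{F}(u)\,dx
\end{gather*}
and the associated manifold $\Pm := \{u \in S_m : Q(u) = 0\}$. Since every critical point of $I|_{S_m}$ satisfies the fractional Pohozaev identity and thus belongs to $\Pm$, it is natural to set $c_m := \inf_{\Pm} I$ and aim to attain it.

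For each fixed $u \in S_m$ I would analyse $\psi_u$. Assumptions $(f_1)$--$(f_3)$ give that $\psi_u$ is positive near $0$ and diverges to $-\infty$ as $t \to +\infty$, while the monotonicity in $(f_4)$ forces $\psi_u$ to admit a \emph{unique} critical point $t(u) > 0$, necessarily a strict global maximum, with $t(u) \star u \in \Pm$. This fractional counterpart of the fibre lemma in~\cite{MR4150876} yields $c_m > 0$ together with the mountain-pass characterisation
\begin{gather*}
  c_m = \inf_{u \in S_m}\sup_{t > 0} I(t \star u),
\end{gather*}
from which a bounded Palais--Smale sequence $(u_n) \subset S_m$ for $I$ at the level $c_m$, with $Q(u_n) \to 0$, can be produced by a standard deformation argument; boundedness of $(u_n)$ in $\Hs$ follows from $Q(u_n) \to 0$ combined with $(f_4)$.

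Compactness is the delicate step. Applying the symmetric decreasing rearrangement and the Pólya--Szegő inequality for $\Hs$ (cf.~\cite{MR2944369}), I may assume each $u_n$ is non-negative and radial: the $L^2$-norm and the Nemytskii-type integrals $\int F(u_n)\,dx$, $\int f(u_n)u_n\,dx$ are preserved, while $[u_n]_{\Hs}$ does not increase, so the energy does not increase and, after re-projecting via $t(u_n) \star u_n$, the sequence stays minimising on $\Pm$. The compact embedding $\Hr \hookrightarrow L^p(\R^N)$ for $p \in (2, 2N/(N-2s))$, combined with the subcritical growth $(f_2)$, then lets me pass to the weak limit $u \in \Hr$ and identify it as a non-negative radial minimiser of $I|_{\Pm}$ with some Lagrange multiplier $\mu \in \R$; strict positivity $u > 0$ follows from the strong maximum principle for $(-\Delta)^s + \mu$.

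Finally, testing~\eqref{eq:Pm} against $u$ yields $[u]_{\Hs}^2 + \mu m = \int_{\R^N} f(u) u\,dx$, while the fractional Pohozaev identity reads $\tfrac{N-2s}{2}[u]_{\Hs}^2 + \tfrac{N \mu}{2} m = N \int_{\R^N} F(u)\,dx$. Eliminating $[u]_{\Hs}^2$ one obtains
\begin{gather*}
  2 s\, \mu\, m = \int_{\R^N}\bigl(2N\, F(u) - (N-2s)\, f(u)\, u\bigr)\,dx,
\end{gather*}
whose right-hand side is strictly positive on $\{u \neq 0\}$ by exactly~$(f_5)$, so $\mu > 0$. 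I expect the hardest point to be the compactness step: the mass-supercritical character of $f$ forbids minimisation on $S_m$ itself, so the Pohozaev constraint, the rearrangement, and the uniqueness of $t(u)$ must be combined to rule out both vanishing and splitting in the concentration--compactness alternative, and in particular to secure the strict subadditivity $c_m < c_{m_1} + c_{m_2}$ for any $m = m_1 + m_2$ with $m_1, m_2 > 0$.
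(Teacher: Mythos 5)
Your skeleton — Pohozaev constraint $\Pm$, unique fibre maximum $t(u)$, minimax characterisation $c_m=\inf_{S_m}\sup_{t>0}I(t\star u)$, and extraction of the multiplier $\mu>0$ from $(f_5)$ combined with the fractional Pohozaev identity — matches the paper's setup exactly. Where you genuinely diverge is in the compactness step, and this is where both a real alternative and a real gap appear.

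The paper does not restrict to radial functions. Instead it constructs, via Ghoussoub's homotopy-stable minimax principle, a Palais--Smale sequence $(u_n)\subset\Pm$ with $\|u_n^-\|_{L^2}\to0$, and then runs a translation-invariant profile decomposition (Lemma~\ref{lemma17}): non-vanishing is forced by the lower bound on $[\,\cdot\,]_{\Hs}$ along $\Pm$; a Brezis--Lieb splitting produces a weak profile $\omega_1$ satisfying $P(\omega_1)=0$ and solving the equation with multiplier $\mu$; the remainder has non-negative limiting energy by the summability $\sum_\ell[\omega_\ell]^2_{\Hs}<\infty$ against the uniform lower bound on $\Pm$; and then — this is the crucial device — the \emph{non-strict} monotonicity $m\mapsto E_m$ from Lemma~\ref{lemma8} forces $I(\omega_1)=E_m=E_h$ with $h=\|\omega_1\|_{L^2}^2$, while $(f_5)$ gives $\mu>0$ and thus, by the conditional strict monotonicity of Lemma~\ref{lemma10}, $E_h>E_{m'}$ for $m'>h$. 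This rules out $h<m$ without ever proving the strict subadditivity $E_m<E_{m_1}+E_{m_2}$ you invoke at the end. Your route (rearrangement plus compact embedding of $\Hr$) is a legitimate alternative, but it buys less: the compact embedding $\Hr\hookrightarrow L^p(\R^N)$, $p\in(2,2^*_s)$, requires $N\ge2$, whereas the theorem as stated holds for all $N>2s$, including $N=1$ when $s<1/2$; indeed the paper reserves the radial-compactness machinery for the multiplicity result (Theorem~\ref{th3}), which is explicitly stated only for $N>2$.

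Two further points deserve scrutiny. First, rearranging a Palais--Smale sequence does not yield a Palais--Smale sequence; you can salvage the argument by treating $(t(u_n^*)\star u_n^*)$ as a mere minimising sequence on $\Pm$, using the natural-constraint property of $\Pm$ to recover the multiplier from the limit rather than from the PS condition — but this is a different argument than the one you set up, and should be said explicitly. Second, your closing paragraph is internally inconsistent: if you really do have the compact radial embedding, vanishing and dichotomy are both excluded directly and strict subadditivity is superfluous; if instead you need subadditivity, you are implicitly giving up the rearrangement route and you have not proven subadditivity (nor does the paper). Pick one mechanism. The Lagrange-multiplier computation via $(f_5)$ at the end is correct and coincides with the paper's.
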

Furthermore, we can prove some remarkable properties of the ground
state level energy with respect the variable $m$ and its asymptotic
behaviour.
\begin{theorem} \label{th2} Assume that~$f$ satisfies
  $(f_0)$-$(f_6)$. Then the function $m \mapsto E_m$ is positive,
  continuous, strictly decreasing.  Furthermore,
  $\lim_{m \to 0^+} E_m = +\infty$ and $\lim_{m \to \infty} E_m = 0$.
\end{theorem}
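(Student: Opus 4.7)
The plan is to exploit the variational characterization $E_m = \inf_{u \in \mathcal{P}_m} I(u)$, where $\mathcal{P}_m = \{u \in S_m : 2s[u]_{H^s}^2 = N\int\tilde F(u)\,dx\}$ is the Pohozaev manifold, together with the fiber map $u \star t := t^{N/2} u(t\cdot)$; by $(f_4)$ this map preserves the $L^2$-norm and the function $t \mapsto I(u \star t)$ has a unique strict maximum at some $t_u > 0$ with $u \star t_u \in \mathcal{P}_m$.  For positivity, the growth assumptions $(f_1)$--$(f_2)$ give $|F(t)| \leq \epsilon|t|^{2+4s/N} + C_\epsilon|t|^{2N/(N-2s)}$; combining with the fractional Gagliardo--Nirenberg and Sobolev inequalities yields
\[
  I(u) \geq \tfrac{1}{2}[u]_{H^s}^2 - \epsilon C_{GN} m^{2s/N}[u]_{H^s}^2 - C_\epsilon C_S [u]_{H^s}^{2N/(N-2s)},
\]
which is strictly positive on a suitable small sphere of $S_m$.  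Since the Pohozaev identity together with the same growth bounds forces $[u]_{H^s} \geq c(m) > 0$ on $\mathcal{P}_m$, we conclude $E_m \geq \delta(m) > 0$.

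For strict monotonicity, fix $m_1 < m_2$ and a ground state $u_1 \in \mathcal{P}_{m_1}$ with Lagrange multiplier $\mu_1 > 0$ furnished by Theorem~\ref{th1}.  Set $\sigma = (m_2/m_1)^{1/N} > 1$ and $v_\sigma(x) = u_1(x/\sigma)$, so $v_\sigma \in S_{m_2}$.  Using the Pohozaev identity to write $\int F(u_1)\,dx = \tfrac{N-2s}{2N}[u_1]_{H^s}^2 + \tfrac{\mu_1 m_1}{2}$, a direct computation gives
\[
  I(v_\sigma) = \tfrac{1}{2}[u_1]_{H^s}^2\, G(\sigma) - \tfrac{1}{2}\sigma^N \mu_1 m_1,
  \qquad G(\sigma) = \sigma^{N-2s} - \tfrac{N-2s}{N}\sigma^N,
\]
and since $G'(\sigma) = (N-2s)\sigma^{N-2s-1}(1-\sigma^{2s}) < 0$ for $\sigma > 1$ we get $I(v_\sigma) < I(u_1) = E_{m_1}$.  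Using the identity $v_\sigma \star \tau = \sigma^{N/2}(u_1 \star (\tau/\sigma))$ together with the envelope computation $\partial_\sigma[\max_\tau I(v_\sigma \star \tau)]\big|_{\sigma = 1} = -N\mu_1 m_1/2 < 0$ and a continuity/chaining argument in $\sigma$, this upgrades to $\max_\tau I(v_\sigma \star \tau) < E_{m_1}$, yielding $E_{m_2} < E_{m_1}$.  Continuity of $m \mapsto E_m$ follows from monotonicity combined with upper semicontinuity via the same rescaling construction (applied to a ground state at $m_0$ for $m_n \to m_0$) and lower semicontinuity obtained by rescaling near-minimizers at $m_n$ back to mass $m_0$.

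For the asymptotic limits, test-function constructions provide the required bounds.  For $m \to \infty$, take any $u_0 \in H^s(\R^N)$ and set $w_m(x) = u_0(x/\sigma_m)$ with $\sigma_m = (m/\|u_0\|_{L^2}^2)^{1/N}$, so $w_m \in S_m$; a computation of the fiber maximum $\max_\tau I(w_m \star \tau)$, combined with the behavior of $\int F(\eta^{N/2}u_0)\,dy$ for small $\eta$ controlled by $(f_1)$, shows this quantity tends to $0$ as $\sigma_m \to \infty$.  The main obstacle is the limit $m \to 0^+$: assuming for contradiction that $E_{m_n}$ stays bounded along some $m_n \to 0^+$, and picking ground states $u_n \in \mathcal{P}_{m_n}$, the critical-point identity $I(u_n) = \tfrac{s}{N}[u_n]_{H^s}^2 - \tfrac{1}{2}\mu_n m_n$ together with $\mu_n > 0$ from Theorem~\ref{th1} forces $[u_n]_{H^s}$ to remain bounded; interpolation with $\|u_n\|_{L^2}^2 = m_n \to 0$ then gives $\|u_n\|_{L^p} \to 0$ for every $p \in (2, 2N/(N-2s))$.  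Assumption $(f_6)$, which provides the pointwise lower bound $F(t) \gtrsim |t|^{2N/(N-2s)}$ for $|t|$ near $0$, when fed into the Pohozaev identity $[u_n]_{H^s}^2 = \tfrac{N}{2s}\int\tilde F(u_n)\,dx$, yields the desired contradiction: the Pohozaev balance cannot be sustained when the nonlinearity is bounded below by the critical Sobolev power while every subcritical mass of $u_n$ vanishes.
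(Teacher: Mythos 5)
Your overall structure — positivity from coercivity on $\mathcal{P}_m$, strict monotonicity from a dilation exploiting $\mu > 0$, continuity from rescaling, and asymptotics from test constructions — parallels the paper's (Lemmas \ref{lemma5}, \ref{lemma7}, \ref{lemma8}, \ref{lemma10}, \ref{lemma11}, \ref{lemma12}). The monotonicity argument via $v_\sigma(x) = u_1(x/\sigma)$ and $G(\sigma)$ is a nice alternative presentation, although the ``continuity/chaining'' step upgrading $I(v_\sigma) < E_{m_1}$ to $\max_\tau I(v_\sigma \star \tau) < E_{m_1}$ needs to be written out: the envelope derivative at $\sigma = 1$ only gives strict decrease locally, and one must invoke Theorem \ref{th1} at each intermediate mass to chain these local statements. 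The paper handles this by splitting into a global ``non-increasing'' lemma (Lemma \ref{lemma8}) plus a local strict decrease driven by $\mu > 0$ (Lemma \ref{lemma10}).

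However, there is a genuine error in where you invoke $(f_6)$: you have its role reversed. In the paper, Lemma \ref{lemma11} ($E_m \to +\infty$ as $m \to 0^+$) uses only $(f_0)$--$(f_4)$; the argument normalizes $v_n = (-\rho_n) * u_n$ with $[v_n]_{H^s} = 1$, deduces $v_n \to 0$ in $L^{2+4s/N}$ by interpolation, and then pushes the fiber parameter to $+\infty$ to blow up the energy — no Lagrange multiplier or lower growth bound on $F$ is involved. It is Lemma \ref{lemma12} ($E_m \to 0$ as $m \to +\infty$) that requires $(f_6)$: the Pohozaev constraint $P(\rho(m)*u_m) = 0$ only directly yields $\sqrt{m}\,e^{N\rho(m)/2} \to 0$, and one needs the lower bound $\tilde F(t) \gtrsim |t|^{2N/(N-2s)}$ near $0$ from $(f_6)$ to upgrade this to the stronger decay $\sqrt{m}\,e^{s\rho(m)} \to 0$, which is what controls the energy. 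Your claim that the $m \to \infty$ limit is ``controlled by $(f_1)$'' cannot work: $(f_1)$ is an \emph{upper} bound on $f$ near $0$, and an upper bound alone cannot force the fiber-maximal energy on the Pohozaev manifold to vanish; a lower bound on $\tilde F$ is essential to locate $\rho(m)$ sharply enough.

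In addition, your $m \to 0^+$ argument has a gap independent of the $(f_6)$ issue. From $I(u_n) = \tfrac{s}{N}[u_n]^2_{H^s} - \tfrac12 \mu_n m_n$ with $I(u_n)$ bounded and $\mu_n > 0$, you conclude that $[u_n]_{H^s}$ stays bounded — but this requires $\mu_n m_n$ to stay bounded, which does not follow: $\mu_n$ may diverge as $m_n \to 0$. The paper avoids this entirely by working with an arbitrary Pohozaev sequence and the scaling $v_n = (-\rho_n)*u_n$, never invoking ground states or multipliers in Lemma \ref{lemma11}.
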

Finally, we have a multiplicity result for the radially symmetric
case.
\begin{theorem} \label{th3}
  If $(f_0)$-$(f_5)$ hold and \(N >2\), then \eqref{eq:Pm}
  admits infinitely many radial solutions $(u_k)_k$ for any $m>0$. In
  particular,
  \begin{gather*}
    I(u_{k+1})\geq I(u_k)
  \end{gather*}
  for all~$k \in \N$ and $I(u_k) \to +\infty$ as $k \to +\infty$.
\end{theorem}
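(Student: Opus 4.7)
The plan is to follow the Lusternik--Schnirelman / Krasnoselskii genus strategy on the Pohozaev constraint, adapting the approach of Jeanjean \emph{et al.} \cite{MR4150876} to the fractional setting. Since $f$ is odd by $(f_0)$, the energy $I$ is even, and by the principle of symmetric criticality every critical point of $I$ restricted to $S_m \cap \Hr$ yields a radial solution of \eqref{eq:Pm}. I would therefore work throughout in the radial subspace $\Hr$, whose decisive feature is the \emph{compact} embedding $\Hr \hookrightarrow L^p(\R^N)$ for every $p \in (2, 2N/(N-2s))$; this is what the assumption $N > 2$ (together with $N > 2s$) makes available.

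First I would exploit the $L^2$-preserving scaling $(t \star u)(x) = t^{N/2} u(tx)$ and the induced fiber map $\Psi_u(t) = I(t \star u)$, which under $(f_1)$--$(f_4)$ admits a unique strict positive maximum at some $t_u > 0$. This yields an odd homeomorphism $u \mapsto t_u \star u$ from $S_m \cap \Hr$ onto the radial Pohozaev set
\begin{gather*}
\Pm \cap \Hr = \left\lbrace u \in S_m \cap \Hr \mid s\,[u]_{\Hs}^2 = \frac{N}{2}\int_{\R^N} \tilde{F}(u)\, dx \right\rbrace,
\end{gather*}
on which $I$ is bounded below by a positive constant. I would then introduce min-max levels via the Krasnoselskii genus $\gamma$ by
\begin{gather*}
c_k = \inf_{A \in \Gamma_k} \sup_{u \in A} I(t_u \star u),
\end{gather*}
where $\Gamma_k = \lbrace A \subset S_m \cap \Hr \mid A \text{ compact},\ A = -A,\ \gamma(A) \geq k \rbrace$. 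Monotonicity $c_k \leq c_{k+1}$ is immediate from genus monotonicity, and standard Lusternik--Schnirelman topological estimates combined with the mass-supercritical growth enforced by $(f_3)$ give $c_k \to +\infty$.

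The core of the proof, and the main obstacle, is showing that each $c_k$ is attained as a critical value. Following Jeanjean's Pohozaev-augmented deformation scheme, I would construct a $\Z_2$-equivariant flow on $S_m \cap \Hr$ producing a Palais--Smale sequence $(u_n)$ with $I(u_n) \to c_k$, approximate Lagrange multipliers $\mu_n$, and the additional input $P(u_n) \to 0$. The near-Pohozaev relation then bounds $[u_n]_{\Hs}$ uniformly, so along a subsequence $u_n \weakto u$ in $\Hr$; the radial compact embedding together with $(f_1)$--$(f_2)$ passes to the limit in the nonlinear terms, and $\mu_n \to \mu$. The delicate point is upgrading this to strong convergence, since $\Hr \hookrightarrow \ldue$ is \emph{not} compact and mass could in principle escape to infinity. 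Here $(f_5)$ enters through the argument used for Theorem \ref{th1}: it forces the limit multiplier $\mu$ to be strictly positive, which in the limit equation $(-\Delta)^s u + \mu u = f(u)$ rules out the vanishing scenario and yields $\|u\|_{\ldue}^2 = m$, hence strong convergence in $\Hr$ and $I(u) = c_k$.

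Finally, oddness of $f$ together with the standard genus multiplicity argument provides, for each $k$, a radial critical point $u_k$ with $I(u_k) = c_k$; distinct indices $k$ yield geometrically distinct solutions (using the usual genus comparison when two consecutive levels coincide, which produces a whole set of critical points of positive genus). By construction $I(u_{k+1}) = c_{k+1} \geq c_k = I(u_k)$ and $I(u_k) \to +\infty$, completing the proof.
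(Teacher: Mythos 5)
Your proposal follows essentially the same route as the paper: genus min-max levels on the radial Pohozaev set, Ghoussoub-style equivariant homotopy-stable families to produce Palais--Smale sequences in $\Pm\cap\Hr$ with $P(u_n)\to 0$, compactness via the radial embedding together with the strictly positive multiplier $\mu>0$ forced by $(f_5)$, and divergence of the min-max levels to get infinitely many solutions. The one step you leave as ``standard'' --- the divergence $c_k\to+\infty$ --- is precisely where the paper invests its main technical effort (a projection estimate onto finite-dimensional subspaces $V_k$, Lemmas~\ref{lemma21}--\ref{lemma22}), but the ingredients you name are the right ones and the overall strategy coincides with the paper's.
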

Our paper is organised as follows. Section \ref{section2} contains the
proofs of some preliminary lemmas that will be useful during the whole
remaining part of the paper. Moreover, we introduce a fiber map that
will play a crucial role for our purposes.  In Section~\ref{section3}
we define the ground state level energy for a fixed mass $m$ and we
start analysing its asymptotic behaviour near zero and
infinity. Section \ref{section4} is devoted to prove our main
existence theorem. Using a min-max theorem of linking type and the
fiber map cited previously, we construct a Palais-Smale sequence whose
value on the Pohozaev functional is zero and we show that a sequence
of this kind must be necessarily bounded. Finally, in Section
\ref{section5}, for the sake of completeness, we discuss the existence
of radial solutions. Here, we use a variant of the min-max theorem
already cited in Section \ref{section4}, but this time we are helped
by the fact that the space of the radially symmetric functions with
finite fractional derivative is compactly embedded in $L^p(\R^N)$ for
$p \in (2,2^*_s)$.

\section{Preliminary results} \label{section2}

We define the \emph{Pohozaev manifold}
\begin{gather*}
  \mathcal{P}_m = \left\{ u \in S_m \mid P(u)=0 \right\},
\end{gather*}
where
\begin{gather*}
  P(u) = \left[ u \right]_{H^s(\R^N)}^2 - \frac{N}{2s} \int_{\R^N}
  \tilde{F}(u) \, dx.
\end{gather*}
Let us collect some technical results that we will frequently used in
the paper. The first two Lemmas will be proved in the Appendix. We use
the shorthand
\begin{gather*}
  B_m = \left\{ u \in H^s(\mathbb{R}^N) \mid \Vert u
    \Vert^2_{L^2(\mathbb{R}^N)} \leq m \right\}.
\end{gather*}
\begin{lemma} \label{lemma1}
  Assuming $(f_0), \, (f_1), \, (f_2)$, the
  following statements hold
\begin{description}
\item[$(i)$] for every $m >0$ there exists $\delta>0$ such that
\begin{gather*}
\frac{1}{4} \left[ u \right]_{\Hs}^2 \leq I(u) \leq \left[ u \right]_{\Hs}^2 
\end{gather*}
where $u \in B_m$ and $\left[ u \right]_{\Hs} \leq \delta$.
\item[$(ii)$]Let $(u_n)_n$ be a bounded sequence in $\Hs$. If $\lim_{n \to +\infty}\Vert u_n \Vert_{L^{2+4s/N}(\R^N)}=0$ we have that
\begin{gather*}
\lim_{n\to +\infty} \int_{\R^N} F(u_n)\, dx=0=\lim_{n\to +\infty}\int_{\R^N}\tilde{F}(u_n)\, dx.
\end{gather*}
\item[$(iii)$] Let $(u_n)_n$, $(v_n)_n$ two bounded sequences in $\Hs$. 
If $\lim_{n \to +\infty} \Vert v_n \Vert_{L^{2+4s/N}}=0$ then
\begin{gather*}
\lim_{n\to +\infty} \int_{\R^N} f(u_n)v_n \, dx = 0.
\end{gather*}
\end{description}
\end{lemma}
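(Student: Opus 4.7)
\emph{Plan.} All three items rest on a single analytic ingredient: a uniform three-piece growth estimate for $f$, $F$ and $\tilde{F}$. Since $f$ is continuous with $f(0)=0$ and, by $(f_1)$ and $(f_2)$, $f(t)=o(|t|^{1+4s/N})$ as $t\to 0$ while $f(t)=o(|t|^{(N+2s)/(N-2s)})$ as $|t|\to\infty$, I would first prove that for every $\varepsilon>0$ and every auxiliary exponent $q\in(2+\tfrac{4s}{N},\,2^*_s)$, where $2^*_s=2N/(N-2s)$, one has
\begin{gather*}
  |f(t)| \leq \varepsilon |t|^{1+\frac{4s}{N}} + \varepsilon |t|^{2^*_s-1} + C_\varepsilon |t|^{q-1}, \\
  |F(t)| + |\tilde{F}(t)| \leq \varepsilon |t|^{2+\frac{4s}{N}} + \varepsilon |t|^{2^*_s} + C_\varepsilon |t|^{q},
\end{gather*}
obtained by splitting $\R$ into a small neighbourhood of the origin, a neighbourhood of infinity, and a compact middle piece. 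Combined with the Sobolev embedding $\Hs\hookrightarrow L^{2^*_s}(\R^N)$ and the fractional Gagliardo-Nirenberg inequality
\begin{gather*}
  \|u\|_{L^{2+4s/N}}^{2+4s/N} \leq C\,\|u\|_{\ldue}^{4s/N}\,[u]_{\Hs}^{2},
\end{gather*}
this is the only analytic tool required.

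For $(i)$ I would plug the estimate for $F$ into $I(u)=\tfrac{1}{2}[u]_{\Hs}^2-\int_{\R^N}F(u)\,dx$. On $B_m$ with $[u]_{\Hs}\leq\delta$, Gagliardo-Nirenberg controls the first summand by $\varepsilon C m^{2s/N}[u]_{\Hs}^2$, Sobolev controls the second by $\varepsilon C[u]_{\Hs}^{2^*_s}$, while interpolating $L^q$ between $L^2$ and $L^{2^*_s}$ controls the third by $C_\varepsilon' m^{(1-\alpha)q/2}[u]_{\Hs}^{\alpha q}$ with $\alpha q=N(q-2)/(2s)>2$ precisely because $q>2+4s/N$. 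Choosing $\varepsilon$ small enough to absorb the $L^{2+4s/N}$ contribution and then $\delta$ small enough to subordinate the two remaining higher-order terms yields $\left|\int_{\R^N}F(u)\,dx\right|\leq\tfrac{1}{4}[u]_{\Hs}^2$, which immediately gives both bounds of $(i)$.

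Items $(ii)$ and $(iii)$ follow by applying the same bound termwise. For $(ii)$, integrating the estimate for $F$ (and $\tilde{F}$) produces a term $\varepsilon\|u_n\|_{L^{2+4s/N}}^{2+4s/N}\to 0$, a term bounded by $\varepsilon C$ via Sobolev and the $\Hs$-boundedness of $(u_n)_n$, and a term $C_\varepsilon\|u_n\|_{L^q}^q$; the last vanishes because interpolating $L^q$ between $L^{2+4s/N}$ and $L^{2^*_s}$ gives $\|u_n\|_{L^q}\leq\|u_n\|_{L^{2+4s/N}}^{1-\beta}\|u_n\|_{L^{2^*_s}}^{\beta}\to 0$. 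Letting $\varepsilon\to 0^+$ concludes. Item $(iii)$ is the parallel statement: apply the estimate for $|f(u_n)|$ and pair each piece with $|v_n|$ via H\"older's inequality, so that the three resulting integrals are, respectively, small (since $\|v_n\|_{L^{2+4s/N}}\to 0$ while $(u_n)_n$ is bounded in the same space), of size $O(\varepsilon)$, and small (since $\|v_n\|_{L^q}\to 0$ by the same interpolation).

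The main technical point I expect to spend care on is setting up the three-piece growth estimate so that the $\varepsilon/C_\varepsilon$ trade-off is preserved uniformly across all three items and combines cleanly with the fractional Gagliardo-Nirenberg interpolation; once the estimate and the inequality are in place, the rest is routine power-counting.
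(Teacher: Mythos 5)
Your proposal is correct, and it rests on the same analytic pillars as the paper's argument — growth estimates derived from $(f_0)$–$(f_2)$, the fractional Sobolev and Gagliardo–Nirenberg inequalities, and H\"older — but it packages the growth bound differently in a way that is worth noting. The paper uses a \emph{two}-piece bound each time and shifts the $\varepsilon$-factor to whichever extremal power needs it: for $(i)$ it writes $|F(t)|\leq \varepsilon|t|^{2+4s/N}+C_\varepsilon|t|^{2N/(N-2s)}$ (so the mass-critical term, scaling like $[u]_{\Hs}^2$, carries the $\varepsilon$), whereas for $(ii)$ and $(iii)$ it writes $|\tilde F(t)|\leq \varepsilon|t|^{2N/(N-2s)}+C_\varepsilon|t|^{2+4s/N}$ and $|f(t)|\leq \varepsilon|t|^{(N+2s)/(N-2s)}+C_\varepsilon|t|^{1+4s/N}$ (so the Sobolev-critical term, which is merely bounded, carries the $\varepsilon$ and the mass-critical term, which vanishes, carries the constant). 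You instead insert a third intermediate power $q\in(2+4s/N,2^*_s)$ so that \emph{both} extremal powers carry an $\varepsilon$-coefficient simultaneously; this produces one uniform estimate that serves all three items at once, at the cost of an extra interpolation step — $L^q$ between $L^2$ and $L^{2^*_s}$ for $(i)$, giving the super-quadratic exponent $\alpha q=N(q-2)/(2s)>2$, and $L^q$ between $L^{2+4s/N}$ and $L^{2^*_s}$ for $(ii)$, $(iii)$. Both routes are valid and of comparable length: the paper avoids the third power by re-deriving the two-piece bound with $\varepsilon$ in the right place for each item; you avoid that case distinction by paying a one-time interpolation toll.
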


\begin{remark}\label{oss1}
  An inspection of the proof of this Lemma shows that the inequality
  \begin{gather*}
    \int_{\R^N} \tilde{F}(u) \, dx \leq \frac{s}{N}\left[ u
    \right]_{\Hs}^2
  \end{gather*}
  holds true if $u \in B_m$ and $\left[ u \right]_{\Hs} \leq
  \delta$. It follows that
  \begin{gather*}
    P(u) \geq \frac{1}{2}\left[ u \right]_{\Hs}^2
  \end{gather*}
  for every $u \in B_m$ with $\left[ u \right]_{\Hs} \leq
  \delta$.
\end{remark}
In order to prove the next result we introduce for every $u \in \Hs$
and $\rho \in \R$ the scaling map
\begin{gather*}
  (\rho * u)(x)= e^{\frac{N\rho}{2}}u(e^{\rho}x) \quad x \in \R^N.
\end{gather*}
It easy to verify that $\rho*u \in \Hs$ and
$\Vert \rho * u\Vert_{\ldue}=\Vert u \Vert_{\ldue}$.

\begin{lemma}\label{lemma2}
  Assuming $(f_0), \, (f_1), \, (f_2)$ and $(f_3)$, we have:
\begin{description}
\item[$(i)$] $I(\rho * u) \to 0^+$ as $\rho \to - \infty$,
\item[$(ii)$] $I(\rho * u) \to - \infty$ as $ \rho \to \infty$.
\end{description}
\end{lemma}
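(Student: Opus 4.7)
The plan is to compute $I(\rho * u)$ explicitly by exploiting the scaling: a change of variables gives $[\rho * u]_{\Hs}^2 = e^{2s\rho}[u]_{\Hs}^2$ and $\int_{\R^N} F(\rho * u)\,dx = e^{-N\rho}\int_{\R^N} F(e^{N\rho/2}u)\,dx$, so that, writing $t = e^{N\rho/2}$,
\begin{gather*}
I(\rho * u) = \frac{1}{2}\, t^{4s/N}\,[u]_{\Hs}^2 - t^{-2}\int_{\R^N} F(tu)\,dx.
\end{gather*}
Part (i) then corresponds to $t \to 0^+$ and part (ii) to $t \to +\infty$, and it is enough to treat $u \not\equiv 0$, the other case being trivial.

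For (i), I would combine $(f_1)$ and $(f_2)$ with the continuity of $F$ to produce the standard pointwise bound $|F(\sigma)| \leq \ge|\sigma|^{2+4s/N} + C_\ge|\sigma|^{2N/(N-2s)}$ for every $\ge > 0$. Using the Sobolev embedding $\Hs \hookrightarrow L^p(\R^N)$ for $p \in [2, 2N/(N-2s)]$, this yields
\begin{gather*}
t^{-2}\Bigl|\int_{\R^N} F(tu)\,dx\Bigr| \leq \ge\, t^{4s/N}\,\|u\|_{L^{2+4s/N}}^{2+4s/N} + C_\ge\, t^{4s/(N-2s)}\,\|u\|_{L^{2N/(N-2s)}}^{2N/(N-2s)},
\end{gather*}
and since both exponents of $t$ are strictly positive the nonlinear term vanishes, giving $I(\rho*u)\to 0$. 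Positivity of the limit is then immediate from Lemma~\ref{lemma1}(i): once $[\rho*u]_{\Hs}\leq \delta$ (with $\delta$ chosen for $m = \|u\|_{L^2}^2$) one has $I(\rho*u) \geq \tfrac14[\rho*u]_{\Hs}^2 > 0$, so the limit is attained from above.

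For (ii), the key analytic input is
\begin{gather*}
t^{-(2+4s/N)}\int_{\R^N} F(tu)\,dx \to +\infty \qquad\text{as } t\to +\infty,
\end{gather*}
since this makes the factored expression $t^{4s/N}\bigl(\tfrac{1}{2}[u]_{\Hs}^2 - t^{-(2+4s/N)}\int F(tu)\,dx\bigr)$ diverge to $-\infty$. I set $h(\sigma) = F(\sigma)/|\sigma|^{2+4s/N}$ for $\sigma\neq 0$ and $h(0)=0$: the integrated form of $(f_1)$ shows that $h$ is continuous at the origin, while $(f_3)$ gives $h(\sigma)\to +\infty$ as $|\sigma|\to\infty$, hence $h\geq -C$ on $\R$ for some $C>0$. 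Rewriting the nonlinear term as $\int h(tu)\,|u|^{2+4s/N}\,dx$ (with the convention $0$ on $\{u=0\}$), I would apply Fatou's lemma to the nonnegative sequence $(h(tu)+C)\,|u|^{2+4s/N}$, integrable thanks to $\Hs\hookrightarrow L^{2+4s/N}(\R^N)$, and conclude using the pointwise blow-up $h(tu(x))\,|u(x)|^{2+4s/N}\to+\infty$ on the positive-measure set $\{u\neq 0\}$.

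The main obstacle is part (ii): assumptions $(f_0)$--$(f_3)$ do not force $F$ to have a definite sign, so Fatou's lemma demands a uniform, integrable lower bound on the integrand. The uniform inequality $h\geq -C$ is the quantitative bridge that simultaneously exploits the behaviour of $F$ near $0$ (through $(f_1)$) and at infinity (through $(f_3)$).
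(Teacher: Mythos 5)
Your argument is correct and follows essentially the same route as the paper's: part (i) reduces to the quantitative estimate behind Lemma~\ref{lemma1}(i) (the paper simply invokes that lemma for both the upper and lower bounds, whereas you re-derive the upper bound explicitly and cite Lemma~\ref{lemma1}(i) only for positivity), and part (ii) is exactly the paper's Fatou argument, since your $h$ together with the constant $C$ in $h\geq -C$ is the paper's $h_\lambda$ with $\lambda=C$.
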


\begin{remark}\label{oss2}
  Assume $f \in C(\R, \R)$, $(f_1)$ and $(f_4)$. Then the function
  $g\colon \R \to \R$ defined as
\begin{gather*}
g(t)=
\begin{cases}
\frac{f(t)t-2F(t)}{|t|^{2+\frac{4s}{N}}} & t \neq 0 \\
0 & t=0
\end{cases}
\end{gather*}
is continuous, strictly increasing in $(0,\infty)$ and
strictly decreasing in $(-\infty, 0)$.
\end{remark}
\begin{lemma}\label{lemma3}
Assuming $f \in C(\R, \R)$, $(f_1)$, $(f_3)$ and $(f_4)$, we have
\begin{description}
\item[$(i)$] $F(t)>0$ if $t\neq 0$,
\item[$(ii)$] there exists $(\tau_n^+)_n\subset \R^+$ and $(\tau_n^-)_n \subset \R^-$, $|\tau_n^{\pm}|\to 0$ as $n\to +\infty$ such that
\begin{gather*}
f(\tau_n^{\pm})\tau_n^{\pm} >\left(2 + \frac{4s}{N} \right)F(\tau^{\pm}_n) \quad n\geq 1,
\end{gather*}
\item[$(iii)$] there exists $(\sigma^{+}_n)_n\subset \R^+$ and $(\sigma_n^-)_n \subset \R^-$, $|\tau_n^{\pm}|\to \infty$ as $n\to +\infty$ such that
\begin{gather*}
f(\sigma_n^{\pm})\sigma_n^{\pm} >\left(2 + \frac{4s}{N} \right)F(\sigma^{\pm}_n) \quad n\geq 1,
\end{gather*}
\item[$(iv)$]
\begin{gather*}
f(t)t>\left(2+\frac{4s}{N}\right)F(t) \quad t\neq 0.
\end{gather*}
\end{description}
\end{lemma}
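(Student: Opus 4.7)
I will prove (iv) first by means of an integral representation, from which (i)--(iii) follow quickly. The starting point is the elementary identity $(F(\sigma)/\sigma^{2})' = \tilde{F}(\sigma)/\sigma^{3}$ for $\sigma \neq 0$. By $(f_1)$, $f(\sigma) = o(|\sigma|^{1+4s/N})$ as $\sigma \to 0$, hence $F(\sigma) = o(|\sigma|^{2+4s/N})$ and $\tilde{F}(\sigma) = o(|\sigma|^{2+4s/N})$. In particular $F(\sigma)/\sigma^{2} \to 0$ as $\sigma \to 0$, and $\tilde{F}(\sigma)/\sigma^{3} = o(|\sigma|^{-1+4s/N})$ is integrable at the origin. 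Integrating the identity from $0$ to $t$, multiplying by $|t|^{-4s/N}$, and substituting $\sigma = tu$, one arrives at the representation
\[
\frac{F(t)}{|t|^{2+4s/N}} = \int_0^{1} g(tu) \, u^{-1 + \frac{4s}{N}} \, du,
\qquad t \neq 0,
\]
where $g(\tau) = \tilde{F}(\tau)/|\tau|^{2+4s/N}$ is the continuous function from Remark~\ref{oss2}.

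By Remark~\ref{oss2}, $g$ is strictly increasing on $(0,\infty)$ and strictly decreasing on $(-\infty, 0)$. Since $tu$ and $t$ share their sign and $|tu| < |t|$ whenever $u \in (0,1)$, in either case $g(tu) < g(t)$; continuity of $g$ propagates this strict inequality through the integral, yielding
\[
\frac{F(t)}{|t|^{2+4s/N}} < g(t) \int_0^{1} u^{-1 + \frac{4s}{N}} \, du = \frac{N}{4s} \cdot \frac{\tilde{F}(t)}{|t|^{2+4s/N}}.
\]
Clearing the common factor $|t|^{2+4s/N}$ gives $(4s/N) F(t) < \tilde{F}(t) = tf(t) - 2F(t)$, that is, $tf(t) > (2 + 4s/N) F(t)$, which is (iv).

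Statements (ii) and (iii) are immediate consequences of (iv), obtained by choosing any sequences $\tau_n^\pm \to 0^\pm$ and $\sigma_n^\pm \to \pm\infty$. For (i), once (iv) is established, a direct differentiation shows that $t \mapsto F(t)/|t|^{2+4s/N}$ is strictly increasing on $(0,\infty)$ and strictly decreasing on $(-\infty, 0)$; combined with the limit $F(t)/|t|^{2+4s/N} \to 0$ as $t \to 0$, this forces $F(t) > 0$ for all $t \neq 0$, completing the proof.

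\textbf{Expected main obstacle.} The delicate point is the clean derivation of the integral representation: ensuring convergence of $\int_0^t \tilde{F}(\sigma)/\sigma^3 \, d\sigma$ at the origin (where $(f_1)$ is used quantitatively), and correctly handling the orientation of the integral and the change of variables on the negative half-line.
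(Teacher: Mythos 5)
Your proposal is correct, and it takes a genuinely different route from the paper. The paper proves $(i)$, $(ii)$, $(iii)$ separately, each by a contradiction argument based on the sign of the derivative of $t \mapsto F(t)/|t|^{2+4s/N}$; it then proves $(iv)$ by yet another contradiction combining $(ii)$, $(iii)$ with $(f_4)$, and finishes with a direct integral estimate. You instead prove $(iv)$ directly via the representation
\[
\frac{F(t)}{|t|^{2+4s/N}} = \int_0^1 g(tu)\,u^{-1+4s/N}\,du,
\]
obtained by integrating $(F(\sigma)/\sigma^2)'=\tilde{F}(\sigma)/\sigma^3$ from $0$ to $t$ and rescaling, and then deduce $(i)$, $(ii)$, $(iii)$ as immediate corollaries. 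The key technical points you flagged — convergence of $\int_0^t \tilde{F}(\sigma)/\sigma^3\,d\sigma$ at the origin (which follows from $(f_1)$ giving $\tilde{F}(\sigma)=o(|\sigma|^{2+4s/N})$) and the sign/orientation bookkeeping for $t<0$ — all check out; the pointwise inequality $g(tu)<g(t)$ for $u\in[0,1)$ holds because $tu$ and $t$ have the same sign with $|tu|<|t|$, so strict monotonicity of $g$ from Remark~\ref{oss2} applies, and continuity of $g$ makes the integrated inequality strict. Your approach is arguably cleaner and more unifying: a single formula replaces three separate contradiction arguments, and the strict monotonicity of $g$ is used once, transparently, rather than being re-derived in each sub-case. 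The paper's proof, on the other hand, requires fewer explicit integral manipulations and stays closer to the raw hypotheses, at the cost of a longer chain of reductio arguments and a somewhat delicate passage from the weak inequality in $(iv)$ to the strict one.
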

\begin{proof}
  $(i)$ By contradiction suppose $F(t_0)\leq 0$ for some
  $t_0\neq0$. Because of $(f_1)$ and $(f_3)$ the function
  $F(t)/|t|^{2+4s/N}$ must attain its global minimum in a point
  $\tau \neq 0$ such that $F(\tau)\leq 0$. It follows that
\begin{equation} \label{eq5} \left. \frac{d}{dt} \frac{F(t)}{|t|^{2+
        \frac{4s}{N}}}
  \right|_{t=\tau}=\frac{f(\tau)\tau-\left(2+\frac{4s}{N}\right)F(\tau)}{|\tau|^{3+\frac{4s}{N}}\operatorname{sgn}(\tau)}=0.
\end{equation}
From Remark \ref{oss2} it follows that $f(t)t>2F(t)$ if $t\neq
0$. Indeed, were the claim false, there would exists $\overline{t}$
such that $f(\overline{t})\overline{t}\leq 2F(\overline{t})$. Choosing
without loss of generality $\overline{t}<0$, we have that
$g(\overline{t})\leq 0$. This and the fact that $g(0)=0$ show that $g$
must be strictly increasing on an interval between $\overline{t}$ and
$0$. Finally, we can have a contradiction observing that
\begin{gather*}
0<f(\tau)\tau-2F(\tau)=\frac{4s}{N}F(\tau)\leq 0.
\end{gather*}
$(ii)$ We start with the positive case. By contradiction we suppose there is $T_{\alpha}>0$ small enough such that
\begin{gather*}
f(t)t\leq \left(2+\frac{4s}{N}\right)F(t)
\end{gather*}
for every $t \in \left(0,T_{\alpha}\right]$. Remembering the
expression of \eqref{eq5} computed in the step $(i)$ we have that
the derivative of $F(t)/|t|^{2+4s/N}$ is nonpositive on
$\left(0,T_{\alpha}\right]$, then
\begin{gather*}
  \frac{F(t)}{t^{2+ \frac{4s}{N}}}\geq
  \frac{F(T_{\alpha})}{T_{\alpha}^{2+ \frac{4s}{N}}} > 0\quad
  \hbox{for every} \quad t \in \left(0,T_{\alpha}\right],
\end{gather*}
that is in contradiction with $(f_1)$. The negative case is similar. 

$(iii)$ Being the two cases similar, we will prove only the negative one. Again, by contradiction we suppose there is $T_{\gamma}>0$ such that 
\begin{gather*}
f(t)t\leq \left( 2 + \frac{4s}{N}\right)F(t) \quad \hbox{for every} \quad t\leq - T_{\gamma}.
\end{gather*}
Since the derivative of $F(t)/|t|^{2+4s/N}$ is nonnegative on $\left(-\infty, -T_{\gamma}\right]$, we can deduce
\begin{gather*}
  \frac{F(t)}{|t|^{2+ \frac{4s}{N}}} \leq
  \frac{F(-T_{\gamma})}{T_{\gamma}^{2+ \frac{4s}{N}}} \quad \hbox{for
    every} \quad t \in \left(-\infty,-T_{\gamma}\right],
\end{gather*}
which contradicts~$(f_3)$.

$(iv)$ We start proving that the inequality holds weakly. By
contradiction we assume
\begin{gather*}
  f(t_0)t_0<\left(2+\frac{4s}{N}\right)F(t_0)
\end{gather*}
for some $t_0\neq 0$ and without loss of generality we can suppose
$t_0 <0$. By step $(ii)$ and $(iii)$ there are
$\tau_{\mathrm{min}}, \tau_{\mathrm{max}} \in \R$, where
$\tau_{\mathrm{min}}<t_0< \tau_{\mathrm{max}}<0$ such that
\begin{equation}\label{eq6}
  f(t)t<\left(2+\frac{4s}{N}\right)F(t) \quad \hbox{for every} \quad t \in (\tau_{\mathrm{min}},\tau_{\mathrm{max}})
\end{equation}
and
\begin{equation}\label{eq7}
  f(t)t=\left(2+\frac{4s}{N}\right)F(t) \quad \hbox{for every} \quad t \in \lbrace\tau_{\mathrm{min}},\tau_{\mathrm{max}}\rbrace.
\end{equation}
By \eqref{eq6} we have
\begin{equation} \label{eq8}
  \frac{F(\tau_{\mathrm{min}})}{|\tau_{\mathrm{min}}|^{2+\frac{4s}{N}}}<\frac{F(\tau_{\mathrm{max}})}{|\tau_{\mathrm{max}}|^{2+\frac{4s}{N}}}.
\end{equation}
Besides, by \eqref{eq7} and $(f_4)$ must be
\begin{equation} \label{eq9}
  \frac{F(\tau_{\mathrm{min}})}{|\tau_{\mathrm{min}}|^{2+\frac{4s}{N}}}=\frac{N}{4s}\frac{\tilde{F}(\tau_{\mathrm{min}})}{|\tau_{\mathrm{min}}|^{2+\frac{4s}{N}}}>\frac{N}{4s}\frac{\tilde{F}(\tau_{\mathrm{max}})}{|\tau_{\mathrm{max}}|^{2+\frac{4s}{N}}}=\frac{F(\tau_{\mathrm{max}})}{|\tau_{\mathrm{max}}|^{2+\frac{4s}{N}}},
\end{equation}
and clearly \eqref{eq8} and \eqref{eq9} are in contradiction. From
what we have just proved, we have that $F(t)/|t|^{2+4s/N}$ is
non-increasing in $(-\infty,0)$ and non decreasing in
$(0,\infty)$. Hence, by virtue of $(f_4)$ the function
$f(t)/|t|^{1+4s/N}$ must necessarily be strictly increasing in
$(-\infty,0)$ and strictly decreasing in $(0,\infty)$. Then
\begin{align*}
  \left(2+\frac{4s}{N}\right)F(t)&=\left(2+\frac{4s}{N}\right)\int_0^t
                                   \frac{f(\kappa)}{|\kappa|^{1+\frac{4s}{N}}}|\kappa|^{1+\frac{4s}{N}}\,
                                   d\kappa \\
                                 &< \left(2+\frac{4s}{N}\right)\frac{f(t)}{|t|^{1+\frac{4s}{N}}}\int_0^t |\kappa|^{1+\frac{4s}{N}}\, d\kappa=f(t)t
\end{align*}
completes the proof.
\end{proof}
\begin{lemma} \label{lemma4}
  Assume $(f_0)-(f_4)$,
  $u \in \Hs \setminus \lbrace 0 \rbrace$. Then the following hold:
\begin{description}
\item[$(i)$] There is a unique $\rho(u) \in \R$ such that $P(\rho(u)*u)=0$.
\item[$(ii)$] $I(\rho(u) * u) > I(\rho * u)$ for any $\rho \neq
  \rho(u)$. Moreover $I(\rho(u)*u)>0$.
\item[$(iii)$] The map $u \to \rho(u)$ is continuous for every $u \in \Hs$.
\item[$(iv)$] $\rho(u)=\rho(-u)$ and $\rho(u( \cdot + y))=\rho(u)$ for ever $u \in \Hs\setminus \lbrace 0\rbrace$ and $y \in \R^N$. 
\end{description}
\end{lemma}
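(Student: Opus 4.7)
My plan is built around the two scaling identities
\[
  [\rho * u]_{\Hs}^2 = e^{2s\rho}\,[u]_{\Hs}^2, \qquad \int_{\R^N} F(\rho*u)\,dx = e^{-N\rho}\int_{\R^N} F\bigl(e^{N\rho/2}u\bigr)\,dx,
\]
from which a direct differentiation yields the key identity
\[
  \frac{d}{d\rho}\,I(\rho * u) \;=\; s\,P(\rho * u).
\]
Thus the one-variable function $h(\rho) := I(\rho * u)$ is $C^1$ and its critical points coincide exactly with the zeros of $\rho\mapsto P(\rho * u)$; this is the engine of the whole argument.

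For (i) and (ii) I would combine Lemma \ref{lemma2} with the identity above: $h(\rho)\to 0^+$ as $\rho\to -\infty$ and $h(\rho)\to -\infty$ as $\rho\to +\infty$, so $h$ attains a global maximum at some $\rho(u)\in \R$ and $P(\rho(u)*u)=0$ there. For uniqueness and for the strict-maximum claim of (ii), I would rewrite
\[
  P(\rho * u) = e^{2s\rho}\!\left([u]_{\Hs}^2 - \frac{N}{2s}\int_{\R^N}\frac{\tilde{F}(e^{N\rho/2}u(x))}{|e^{N\rho/2}u(x)|^{2+4s/N}}\,|u(x)|^{2+4s/N}\,dx\right)
\]
and invoke Remark \ref{oss2}: under $(f_4)$ the map $t\mapsto \tilde{F}(t)/|t|^{2+4s/N}$ is strictly increasing on $(0,+\infty)$ and strictly decreasing on $(-\infty,0)$, so for every $x$ with $u(x)\neq 0$ the integrand is strictly increasing in $\rho$. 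The bracketed quantity is therefore strictly decreasing in $\rho$ and vanishes at exactly one point, so $\rho(u)$ is unique and $h'=sP(\rho*u)$ changes sign from $+$ to $-$ at $\rho(u)$. This gives $h$ strictly increasing on $(-\infty,\rho(u))$ and strictly decreasing on $(\rho(u),+\infty)$, which is the strict-maximum statement of (ii); positivity of $h(\rho(u))$ follows from $h\to 0^+$ at $-\infty$ combined with strict monotonicity on $(-\infty,\rho(u))$.

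For (iii), let $u_n\to u$ in $\Hs$ with $u\neq 0$ and set $v_n := \rho(u_n)*u_n$. The heart of the matter is to prove that $(\rho(u_n))_n$ is bounded. For a lower bound, if $\rho(u_n)\to -\infty$ then $[v_n]_{\Hs}^2 = e^{2s\rho(u_n)}[u_n]_{\Hs}^2 \to 0$; since $\|v_n\|_{L^2}=\|u_n\|_{L^2}$ is bounded, Remark \ref{oss1} applied inside a common ball $B_m$ gives $P(v_n)\geq \tfrac12 [v_n]_{\Hs}^2 > 0$ eventually, contradicting $P(v_n)=0$. For the upper bound I would exploit part (ii) in the form $I(v_n) > I(u_n)\to I(u)$, so a contradiction follows if I can show $I(v_n)\to -\infty$ when $\rho(u_n)\to +\infty$. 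Given $K>0$, pick $M=M(K)$ from $(f_3)$ so that $F(t)\geq K|t|^{2+4s/N}$ for $|t|\geq M$; using $F\geq 0$ (Lemma \ref{lemma3}) and rescaling,
\[
  \int_{\R^N} F(v_n)\,dx \;\geq\; K\,e^{2s\rho(u_n)}\int_{\{|u_n|\geq M e^{-N\rho(u_n)/2}\}} |u_n|^{2+4s/N}\,dx.
\]
Since $Me^{-N\rho(u_n)/2}\to 0$ and $u_n\to u$ in $L^{2+4s/N}$ by Sobolev embedding, a Chebyshev-type estimate on the complementary set forces the integral on the right to converge to $\|u\|^{2+4s/N}_{L^{2+4s/N}}>0$; as $[v_n]_{\Hs}^2=e^{2s\rho(u_n)}[u_n]_{\Hs}^2$ and $K$ is arbitrary, this gives $\int F(v_n)\,dx/[v_n]_{\Hs}^2\to +\infty$, hence $I(v_n)\to -\infty$. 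With $(\rho(u_n))_n$ bounded, any cluster point $\rho^*$ satisfies $P(\rho^**u)=0$ by joint continuity of $P$ in $(\rho,u)$, and uniqueness from (i) gives $\rho^*=\rho(u)$; the full sequence converges by the standard subsequence argument.

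Part (iv) reduces to symmetries of $P$. Oddness of $f$ implies that $\tilde{F}$ is even, so $P(-w)=P(w)$ for every $w\in\Hs$; applying this to $w=\rho*u$ together with $\rho*(-u)=-(\rho*u)$ gives $P(\rho*(-u))=P(\rho*u)$, and uniqueness yields $\rho(-u)=\rho(u)$. Likewise $\rho*(u(\cdot+y))$ is a spatial translate of $\rho*u$, and both $[\,\cdot\,]_{\Hs}^2$ and $\int\tilde{F}(\cdot)\,dx$ are translation invariant, so $P(\rho*(u(\cdot+y)))=P(\rho*u)$ and $\rho(u(\cdot+y))=\rho(u)$. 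I expect the sole delicate step in this program to be the upper bound $\rho(u_n)\not\to+\infty$ in (iii): making the above estimate uniform in $n$ requires combining the superquadratic lower bound from $(f_3)$ with the rescaling identities and the Sobolev convergence of $u_n$ in a careful way.
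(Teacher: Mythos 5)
Your argument for (i), (ii), and (iv) is essentially the paper's: same key identity $\frac{d}{d\rho}I(\rho*u)=sP(\rho*u)$, same use of Lemma~\ref{lemma2} for the shape of $\rho\mapsto I(\rho*u)$, and the same strict monotonicity of $\rho\mapsto g(e^{N\rho/2}t)=\tilde F(e^{N\rho/2}t)/|e^{N\rho/2}t|^{2+4s/N}$ from Remark~\ref{oss2} and $(f_4)$ to force uniqueness; (iv) is the identical symmetry argument. The genuine variation is in the boundedness step of (iii). For $\rho(u_n)\not\to-\infty$, the paper argues through $I$: $I(\rho_n*u_n)\ge I(\rho(u)*u_n)\to I(\rho(u)*u)>0$, whereas $I(\rho_n*u_n)\le[\rho_n*u_n]_{\Hs}^2\to0$ by Lemma~\ref{lemma1}(i). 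You instead apply Remark~\ref{oss1} directly to $P$ and deduce $P(v_n)\ge\frac12[v_n]_{\Hs}^2>0$ while $P(v_n)=0$; this is a slightly cleaner contradiction that avoids $I$ altogether. For $\rho(u_n)\not\to+\infty$, the paper uses the auxiliary function $h_0$ from the proof of Lemma~\ref{lemma2} plus Fatou's lemma and $I(\rho_n*u_n)\ge0$; you instead extract from $(f_3)$ a quantitative superquadratic lower bound $F(t)\ge K|t|^{2+4s/N}$ for $|t|\ge M$ and show $\int F(v_n)/[v_n]_{\Hs}^2\to+\infty$, hence $I(v_n)\to-\infty$, against $\liminf I(v_n)\ge I(u)$. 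Both routes implement the same idea (superquadraticity of $F$ forces the potential term to dominate under dilation); your version is more explicit but the passage you label a ``Chebyshev-type estimate,'' namely $\int_{\{|u_n|< M e^{-N\rho_n/2}\}}|u_n|^{2+4s/N}\to0$, really rests on a pointwise a.e.\ argument combined with Pratt's generalized dominated convergence (using $|u_n|^{2+4s/N}\to|u|^{2+4s/N}$ in $L^1$): a plain Chebyshev bound does not close it since the sublevel set has infinite measure. You flagged this as the delicate spot, and once spelled out with DCT the proof is sound.
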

\begin{proof}
$(i)$ Since
\begin{gather*}
I(\rho*u)=\frac{1}{2} e^{2 \rho s} \left[ u \right]_{\Hs}^2-e^{-N \rho} \int_{\R^N} F(e^{N \rho}u) \, dx
\end{gather*}
it is easy to check that $I(\rho * u)$ is $C^1$ with respect to $\rho$. Now, computing
\begin{gather*}
\frac{d}{d \rho} I(\rho * u) = \rho e^{2 \rho s}\left[ u \right]_{\Hs}^2-\frac{N}{2}e^{- N \rho}\int_{\R^N} \tilde{F}\left(e^{\frac{N \rho}{2}} u\right) \, dx.
\end{gather*}
and observing that
\begin{gather*}
P(\rho * u)=e^{2 \rho s} \left[ u \right]_{\Hs}^2-\frac{N}{2s}e^{-N\rho}\int_{\R^N} \tilde{F}\left(e^{\frac{N \rho}{2}} u\right) \, dx
\end{gather*}
we deduce
\begin{gather*}
\frac{d}{d\rho} I(\rho * u)=s P(\rho * u).
\end{gather*}
Remembering that by lemma \ref{lemma2}
\begin{gather*}
\lim_{\rho \to -\infty} I(\rho * u) =0^+ \quad \hbox{and} \quad \lim_{\rho \to \infty} I(\rho * u)=-\infty
\end{gather*}
we can conclude that $\rho \mapsto I(\rho * u)$ must reach a global maximum at some point $\rho(u)$; since
\begin{gather*}
0=\frac{d}{d \rho} I(\rho(u) * u)=sP(\rho(u) * u),
\end{gather*}
we conclude that $P(\rho (u) * u)=0$. To check the uniqueness of the point $\rho(u)$, recalling the
function $g$ defined in Remark \ref{oss2}, we observe that
$\tilde{F}(t)=g(t) |t|^{2 + \frac{4s}{N}}$ for every $t \in \R$. Thus
we obtain
\begin{align*}
  P(\rho * u )& = e^{2 \rho s} \left[ u \right]_{\Hs}^2-\frac{N}{2s}e^{2 \rho s} \int_{\R^N} g(e^{\frac{N \rho}{2}}u)|u|^{2+\frac{4s}{N}} \, dx \\
              &=e^{2 \rho s} \left[ \left[ u \right]_{\Hs}^2-\frac{N}{2s} \int_{\R^N} g(e^{\frac{N \rho}{2}}u)|u|^{2+\frac{4s}{N}} \, dx \right]=\frac{1}{s}\frac{d}{d\rho}I(\rho *u).
\end{align*}
Fixing $t \in \R \setminus \lbrace 0 \rbrace$, thanks to Remark
\ref{oss2} and $(f_4)$, we notice that the function
$\rho \mapsto g\left(e^{\frac{N \rho}{2}}t\right)$ is strictly
increasing. Thus, by virtue of the previous computations, it
follows that $\rho(u)$ must be unique.

$(ii)$ This follows at once from \((i)\).

$(iii)$ By step $(i)$ the function $ u \mapsto \rho(u)$ is well
defined. Let $u \in \Hs \setminus \lbrace 0\rbrace$ and
$(u_n)_n \subset \Hs \setminus \lbrace 0\rbrace$ a sequence such that
$u_n \to u$ in $\Hs$ as $n \to +\infty$. We set $\rho_n=\rho(u_n)$ for
any $n \geq 1$. Let us show that up to a subsequence we have
$\rho_n \to \rho(u)$ as $n \to +\infty$.

\textit{Claim}. The sequence $(\rho_n)_n$ is bounded.

We recall that the function $h_{\lambda}$ defined in \eqref{eq4}
noticing that by lemma \ref{lemma3} $(i)$ $h_0(t)\geq 0$ for every
$t \in \R$. We assume by contradiction that up to a subsequence
$\rho_n \to +\infty$. By Fatou's lemma and the fact that $u_n \to u$
a.e. in $\R^N$, we have that
\begin{gather*}
  \lim_{n \to +\infty} \int_{\R^N} h_0\left(e^{\frac{N \rho_n}{2}}u_n
  \right) |u_n|^{2+\frac{4s}{N}} \, dx = \infty.
\end{gather*}
As a consequence of that, by \eqref{eq10} with $\lambda=0$ and step
$(ii)$, we obtain
\begin{equation} \label{eq11} 0 \leq e^{-2 \rho_n
    s}I(\rho_n*u_n)=\frac{1}{2}\left[u_n\right]_{\Hs}^2-\int_{\R^N}h_0
  \left(e^{\frac{N \rho_n}{2}}u_n \right)|u_n|^{2 + \frac{4s}{N}} \,
  dx \to - \infty
\end{equation}
as $n \to +\infty$ that is evidently not possible. Then $(\rho_n)_n$
must be bounded from above. Now we assume, again by contradiction, that
$\rho_n \to -\infty$. By step $(ii)$ we observe that
\begin{gather*}
  I(\rho_n*u_n) \geq I(\rho(u) * u_n),
\end{gather*}
and since $\rho(u)*u_n \to \rho(u) *u$ in $\Hs$, it follows that
\begin{gather*}
  I(\rho(u)*u_n)=I(\rho(u)*u)+o_n(1).
\end{gather*}
We deduce that
\begin{equation} \label{eq12} \liminf_{n \to +\infty} I(\rho_n * u_n)
  \geq I(\rho(u)*u)>0.
\end{equation}
Since we have $\rho_n * u_n \subset B_m$ for $m \gg 1$,
Lemma \ref{lemma1} $(i)$ implies that there exists~$\delta >0$ such that
if $\left[\rho_n * u_n\right]_{\Hs} \leq \delta $, we have
\begin{equation} \label{eq13}
  \frac{1}{4} \left[ \rho_n *
    u_n\right]_{\Hs}^2\leq I(\rho_n*u_n) \leq \left[ \rho_n *
    u_n\right]_{\Hs}^2.
\end{equation}
Since
\begin{gather*}
  \left[ \rho_n * u_n \right]_{Hs}=e^{\rho_n s}\left[ u_n
  \right]_{\Hs},
\end{gather*}
\eqref{eq13} holds for any $n$ sufficiently large. Therefore we obtain
\begin{gather*}
  \liminf_ {n \to +\infty} I(\rho_n * u_n)=0,
\end{gather*}
in contradiction to~\eqref{eq12}. The claim is proved.

The sequence~$(\rho_n)_n$ being bounded, we can assume  that, up to a
subsequence,  $\rho_n \to \rho^*$ for some $\rho^*$ in $\R$. Hence,
$\rho_n * u_n \to \rho^* * u$ in $\Hs$ and since $P(\rho_n * u_n)=0$
we have
\begin{gather*}
P(\rho^* * u)=0.
\end{gather*}
By the uniqueness proved at step $(ii)$ we obtain $\rho^*=\rho(u)$.

$(iv)$ Since $f$ is odd by $(f_0)$, the fact that
\begin{gather*}
P(\rho(u)*(-u))=P\left(-(\rho(u)*u)\right)=P(\rho(u)*u)=0
\end{gather*}
imply $\rho(u)=\rho(-u)$. Similarly, changing the variables in the integral, we can verify that \(\rho\) is invariant under translation, and it is easy to check that
\begin{gather*}
P(\rho(u)*u(\cdot + y))=P(\rho(u)*u)=0,
\end{gather*}
thus $\rho(u(\cdot + y))=\rho(u)$.
\end{proof} 
As we are going to see, the functional $I$ constrained on
\(\mathcal{P}_m\) has some crucial properties.
\begin{lemma} \label{lemma5}
  Assuming $(f_0)-(f_4)$, the following statements are true:
\begin{description}
\item[$(i)$] $\Pm \neq \emptyset$,
\item[$(ii)$] $\inf_{u \in \Pm} \left[u\right]_{\Hs}>0$,
\item[$(iii)$] $\inf_{u \in \Pm} I(u) >0$,
\item[$(iv)$] $I$ is coercive on $\Pm$, i.e. $I(u_n)\to \infty$ if
  $(u_n)_n \subset \Pm$ and $\Vert u_n \Vert_{\Hs} \to \infty$ as
  $n \to +\infty$.
\end{description}
\end{lemma}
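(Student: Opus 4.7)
Parts~(i)--(iii) are short consequences of Lemma~\ref{lemma4} and the preliminary estimates. For~(i), starting from any $u_0\in S_m$, Lemma~\ref{lemma4}(i) produces a scaling $\rho(u_0)*u_0$ at which the Pohozaev functional $P$ vanishes; since $*$-scaling preserves the $L^2$-norm, $\rho(u_0)*u_0\in\mathcal{P}_m$. For~(ii), Remark~\ref{oss1} gives $\delta>0$ with $P(u)\geq \tfrac{1}{2}[u]_{\Hs}^2$ for every $u\in B_m$ with $[u]_{\Hs}\leq \delta$; since $P=0$ on $\mathcal{P}_m$, this forces $[u]_{\Hs}>\delta$ there. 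For~(iii), given $u\in\mathcal{P}_m$, the scaling $\rho_u:=(1/s)\log(\delta/[u]_{\Hs})$ is negative and makes $[\rho_u*u]_{\Hs}=\delta$, so Lemma~\ref{lemma1}(i) yields $I(\rho_u*u)\geq \delta^2/4$. Since $\rho(u)=0$ when $u\in\mathcal{P}_m$, Lemma~\ref{lemma4}(ii) then gives $I(u)\geq I(\rho_u*u)\geq \delta^2/4$.

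Part~(iv) is the main difficulty. I argue by contradiction: suppose $(u_n)\subset\mathcal{P}_m$ with $\|u_n\|_{\Hs}\to\infty$ (hence $[u_n]_{\Hs}\to\infty$, the $L^2$-mass being fixed) but $I(u_n)\leq M$. Since $\rho\mapsto \int_{\R^N}F(\rho*u_n)\,dx$ is continuous and runs from $0^+$ (as $\rho\to-\infty$, by Lemma~\ref{lemma2}) to $+\infty$ (as $\rho\to+\infty$), the intermediate value theorem supplies $\rho_n$ with $v_n:=\rho_n*u_n$ satisfying $\int_{\R^N} F(v_n)\,dx=1$. Applying Lemma~\ref{lemma4}(ii) with $\rho(u_n)=0$ gives $I(u_n)\geq I(v_n)=\tfrac{1}{2}[v_n]_{\Hs}^2-1$, so $[v_n]_{\Hs}^2\leq 2(M+1)$: the sequence $(v_n)$ is bounded in $\Hs$. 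It is also bounded \emph{away from} zero in the Gagliardo semi-norm, for otherwise the fractional Gagliardo--Nirenberg inequality together with Lemma~\ref{lemma1}(ii) would force $\int F(v_n)\to 0$, contradicting the normalization.

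The crux is the reuse of $P(u_n)=0$ on the rescaled functions. Writing $u_n=(-\rho_n)*v_n$ and setting $\tau_n:=e^{-N\rho_n/2}$, a direct change of variables converts the identity $[u_n]_{\Hs}^2=\tfrac{N}{2s}\int\tilde{F}(u_n)\,dx$ into the scale-invariant form
\[
 \int_{\R^N}\tilde{F}(\tau_n v_n)\,dy \;=\; \frac{2s}{N}\,\tau_n^{2+4s/N}\,[v_n]_{\Hs}^2 .
\]
Since $[u_n]_{\Hs}=\tau_n^{2s/N}[v_n]_{\Hs}$ and $[v_n]_{\Hs}$ is bounded while $[u_n]_{\Hs}\to\infty$, necessarily $\tau_n\to\infty$. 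For any $K>0$, assumption~$(f_3)$ combined with Lemma~\ref{lemma3}(iv) gives $\tilde{F}(t)\geq (4s/N)K|t|^{2+4s/N}$ whenever $|t|\geq R_K$. Applying this on the set $\{|v_n|\geq R_K/\tau_n\}$ (which invades $\{v_n\neq 0\}$ as $\tau_n\to\infty$) and controlling the complementary region by Chebyshev's inequality, the factors $\tau_n^{2+4s/N}$ cancel from the two sides of the identity above and one ends up with
\[
 \|v_n\|_{L^{2+4s/N}(\R^N)}^{2+4s/N} \;\leq\; \frac{[v_n]_{\Hs}^2}{2K}+o(1)\qquad (n\to\infty).
\]
Because $K$ is arbitrary and $[v_n]_{\Hs}$ is uniformly bounded, this forces $\|v_n\|_{L^{2+4s/N}(\R^N)}\to 0$, and Lemma~\ref{lemma1}(ii) then yields $\int F(v_n)\,dx\to 0$, contradicting $\int F(v_n)\,dx=1$. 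The hardest step is organising the rescaling so that the $\tau_n$-factors cancel exactly; once this is arranged, Lemma~\ref{lemma1}(ii) closes the argument at once.
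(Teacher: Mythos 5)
Your parts (i)--(iii) are essentially the paper's argument. Part (iv) is correct but takes a genuinely different route. The paper rescales by the Gagliardo seminorm, setting $\rho_n=s^{-1}\log[u_n]_{\Hs}$ so that $v_n:=(-\rho_n)*u_n$ has $[v_n]_{\Hs}=1$, and then runs a Lions-type dichotomy on $\alpha=\limsup_n\sup_y\int_{B(y,1)}|v_n|^2$: in the vanishing case it invokes the Lions lemma to get $v_n\to 0$ in $L^{2+4s/N}$ and concludes via Lemma~\ref{lemma1}(ii), while in the non-vanishing case it uses the translated weak limit together with Fatou's lemma applied to $h_0$ to force $e^{-2\rho_n s}I(u_n)\to-\infty$. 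You instead normalise the potential energy, choosing $\rho_n$ by the intermediate value theorem so that $\int F(v_n)\,dx=1$, and then extract a contradiction purely from the Pohozaev identity $P(u_n)=0$ rewritten in scale-invariant form, combined with the pointwise superlinear bound $\tilde F(t)\geq (4s/N)K|t|^{2+4s/N}$ for $|t|\geq R_K$ (which follows from $(f_3)$ and Lemma~\ref{lemma3}(iv)) and a level-set splitting of $\int|v_n|^{2+4s/N}$. This bypasses concentration--compactness entirely: no dichotomy, no translated weak limits, no Fatou argument, at the price of the slightly delicate bookkeeping needed to make the $\tau_n$-factors cancel. One small remark: your aside that $[v_n]_{\Hs}$ is bounded away from zero is not actually needed, since $\tau_n^{2s/N}=[u_n]_{\Hs}/[v_n]_{\Hs}\geq [u_n]_{\Hs}/\sqrt{2(M+1)}\to\infty$ already, and the final contradiction $\int F(v_n)\,dx\to 0$ versus $\int F(v_n)\,dx=1$ does not use it.
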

\begin{proof}
Statement~$(i)$ follows directly from Lemma \ref{lemma4} $(i)$.

$(ii)$ Were the assertion not true, we would be able to take a
sequence $(u_n)_n \subset \Pm$ such that
$\left[u_n \right]_{\Hs}\to 0$, and so, by Lemma \ref{lemma1} $(i)$ we
could also find $\delta >0$ and $\overline{n}$ so large that
$\left[u_n \right]_{\Hs} \leq \delta$ for every $n \geq
\overline{n}$. By Remark \ref{oss1} we would have
\begin{gather*}
0=P(u_n) \geq \frac{1}{2} \left[u_n \right]_{\Hs}^2
\end{gather*}
which is possible only for a constant~$u_n$. But this is not
admissible since $u \in S_m$. Hence the statement must hold.

$(iii)$ For every $u \in \Pm$ Lemma \ref{lemma4} $(ii)$ and $(iii)$
implies that
\begin{gather*}
  I(u)=I(0*u)\geq I(\rho * u) \quad \hbox{for every} \quad \rho \in
  \R.
\end{gather*}
Let $\delta>0$ be the number given by Lemma \ref{oss1} $(i)$ and set
\(1/\rho:=s \log \left(\delta / \left[ u \right]_{\Hs}\right)\).
Since
$\delta = \left[\rho * u \right]_{\Hs}$, using again Lemma
\ref{lemma1} $(i)$ we obtain
\begin{gather*}
  I(u) \geq I(\rho * u) \geq \frac{1}{4} \left[\rho * u
  \right]_{\Hs}^2=\frac{1}{4} \delta^2
\end{gather*}
proving the statement.

$(iv)$ By contradiction we suppose the existence of $(u_n)_n \subset \Pm$
such that $\Vert u_n \Vert_{\Hs} \to \infty$ with
$\sup_{n \geq 1} I(u_n) \leq c$ for some $ c \in (0, \infty)$. For any
$n \geq 1$ we set
\begin{gather*}
  \rho_n =\frac{1}{s}\log \left(\left[u_n \right]_{\Hs} \right) \quad
  \hbox{and} \quad v_n=(-\rho_n)*u_n.
\end{gather*}
Evidently $\rho_n \to +\infty$, $(v_n)_n \subset S_m$ and
$\left[v_n \right]_{\Hs}=1$. We denote with
\begin{gather*}
  \alpha=\limsup_{n \to +\infty} \sup_{y \in
      \R^N}\int_{B(y,1)}|v_n|^2 \, dx
\end{gather*}
and we distinguish two cases.

\textit{Non vanishing:} $\alpha >0$. Up to a subsequence we can assume
the existence of a sequence $(y_n)_n \subset \R^N$ and
$\omega \in \Hs \setminus \lbrace 0\rbrace$ such that
\begin{gather*}
  \omega_n=v_n(\cdot + y_n) \rightharpoonup \omega \,\, \hbox{in} \,\,
  \Hs \quad \hbox{and} \quad \omega_n \to \omega \, \, \hbox{a.e. in}
  \, \R^N.
\end{gather*}
Recalling the definition of the continuous function $h_{\lambda}$ with $\lambda=0$,
remembering that $\rho_n \to +\infty$ as $n \to +\infty$ and using the
Fatou's lemma we have
\begin{gather*}
  \lim_{n \to +\infty} \int_{\R^N} h_0 \left( e^{\frac{N
        \rho_n}{2}}\omega_n\right)|\omega_n|^{2+\frac{4s}{N}} \, dx=
  \infty.
\end{gather*}
By step $(iii)$ and \eqref{eq5}, after changing the variables in the
integral, we obtain
\begin{align*}
0 \leq e^{-2 \rho_n s}I(u_n) &=e^{- 2 \rho_n s}I(\rho_n * v_n)= \frac{1}{2}-\int_{\R^N}h_0\left(e^{\frac{N \rho_n}{2}}v_n \right)|v_n|^{2 + \frac{4s}{N}} \, dx\\ 
& =\frac{1}{2}-\int_{\R^N}h_0\left(e^{\frac{N \rho_n}{2}} \omega \right)|\omega_n|^{2 + \frac{4s}{N}} \, dx \to -\infty
\end{align*}
as $n \to +\infty$.

\emph{Vanishing:} $\alpha=0$. By \cite[Lemma II.4]{MR3059423}, we have
that $v_n \to 0 $ in $L^{2+\frac{4s}{N}}(\R^N)$ and by Lemma
\ref{lemma1} $(ii)$ we see that
\begin{gather*}
  \lim_{n \to +\infty} e^{N \rho} \int_{\R^N}F\left(e^{\frac{N
        \rho}{2}}v_n \right)=0 \quad \hbox{for every} \quad \rho \in
  \R.
\end{gather*}
Since $P(\rho_n*v_n)=P(u_n)=0$, by Lemma \ref{lemma4} $(ii)$ and
$(iii)$, we obtain
\begin{multline*}
  c\geq I(u_n)=I(\rho_n * v_n)\\
  \geq P(\rho*v_n)= \frac{1}{2}e^{2 \rho s}- e^{-N \rho} \int_{\R^N}
  F\left(e^{\frac{N \rho}{2}}v_n\right)\, dx = \frac{1}{2}e^{2 \rho
    s}-o_n(1).
\end{multline*}
We can conclude choosing $\rho > \log(2c)/2s$ and letting
$n \to +\infty$.
\end{proof}
We conclude with a splitting result \emph{\`a la} Brezis-Lieb. A proof
is included for the reader's convenience.
\begin{lemma} \label{lemma6} Let $f\colon \R \to \R $ continuous, odd
  and let $(u_n)_n \subset \Hs$ a bounded sequence such that
  $u_n \to u$ pointwise almost everywhere in $\R^N$. If there exists
  $C >0$ such that
\begin{gather*}
|f(t)| \leq C \left(|t|+|t|^{2^*_s-1} \right),
\end{gather*}
then 
\begin{gather*}
\lim_{n \to +\infty} \int_{\R^N}|F(u_n)-F(u_n-u)-F(u)| \, dx=0
\end{gather*}
\end{lemma}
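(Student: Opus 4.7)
The plan is to adapt the classical Brezis--Lieb truncation argument to the nonlinearity $F$. First, using the growth hypothesis $|f(t)|\leq C(|t|+|t|^{2^*_s-1})$ together with the elementary identity
\begin{gather*}
F(a+b)-F(a)-F(b)=\int_0^1 \left[f(a+tb)-f(tb)\right]b\,dt,
\end{gather*}
I would derive, via two applications of Young's inequality (with conjugate exponents $(2,2)$ and $(2^*_s/(2^*_s-1),2^*_s)$), the pointwise estimate
\begin{gather*}
|F(a+b)-F(a)-F(b)|\leq \varepsilon\left(|a|^2+|a|^{2^*_s}\right)+C_\varepsilon\left(|b|^2+|b|^{2^*_s}\right)
\end{gather*}
valid for every $\varepsilon>0$ and every $a,b\in\R$. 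The key point in this step is to split the cross terms so that the $\varepsilon$-piece depends only on the first variable $a$.

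Next, setting $w_n:=u_n-u$ and applying the estimate above with $a=w_n(x)$, $b=u(x)$, I would introduce the truncated remainder
\begin{gather*}
W_n^\varepsilon(x):=\max\left\{|F(u_n(x))-F(w_n(x))-F(u(x))|-\varepsilon\left(|w_n(x)|^2+|w_n(x)|^{2^*_s}\right),\,0\right\}.
\end{gather*}
By construction $0\leq W_n^\varepsilon(x)\leq C_\varepsilon\left(|u(x)|^2+|u(x)|^{2^*_s}\right)$, and the right-hand side is in $L^1(\R^N)$ thanks to the Sobolev embedding $\Hs\hookrightarrow L^2(\R^N)\cap L^{2^*_s}(\R^N)$ (which requires $N>2s$). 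Since $u_n\to u$ a.e., we have $w_n\to 0$ a.e., and the continuity of $F$ with $F(0)=0$ forces $W_n^\varepsilon\to 0$ pointwise almost everywhere. Lebesgue's dominated convergence theorem then gives $\int_{\R^N} W_n^\varepsilon\,dx\to 0$ as $n\to+\infty$.

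Finally, by the very definition of $W_n^\varepsilon$,
\begin{gather*}
\int_{\R^N}|F(u_n)-F(w_n)-F(u)|\,dx \leq \int_{\R^N}W_n^\varepsilon\,dx + \varepsilon \int_{\R^N}\left(|w_n|^2+|w_n|^{2^*_s}\right)\,dx,
\end{gather*}
and the last integral is uniformly bounded in $n$ by some $M>0$ because $(w_n)_n$ is bounded in $\Hs$, hence in $L^2\cap L^{2^*_s}$. Letting first $n\to+\infty$ and then $\varepsilon\to 0^+$ yields the claim. The main obstacle is really only the derivation of the pointwise bound in the first step: if the $\varepsilon$-term were to depend on $b=u$ rather than on $a=w_n$, one would lose the dominating function needed for the dominated convergence argument, and the scheme would collapse.
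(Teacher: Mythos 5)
Your argument is correct and follows essentially the same route as the paper's: the pointwise estimate $|F(a+b)-F(a)-F(b)|\le\varepsilon\,\varphi(a)+\psi_\varepsilon(b)$ obtained from the growth bound and Young's inequality, applied with $a=u_n-u$ and $b=u$, followed by the Brezis--Lieb truncation. The only (presentational) difference is that the paper derives the slightly weaker two-term bound $|F(a+b)-F(a)|\le\varepsilon\,\varphi(a)+\psi_\varepsilon(b)$ and then invokes Theorem~2 of Brezis and Lieb (reference \texttt{MR699419}) as a black box, whereas you re-derive the truncation/dominated-convergence step directly.
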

\begin{proof}
  Let \(a\), \(b \in \mathbb{R}\) and \(\varepsilon>0\). We compute
  \begin{align*}
    \left| F(a+b)-F(a) \right| &= \left| \int_0^1 \frac{d}{d\tau}
                                 F(a+\tau b)\, d\tau \right| \\
                               &= \left| \int_0^1 F'(a+\tau b) b \, d\tau \right| \\
    &\leq C \int_0^1 \left( |a+\tau b| + |a+\tau b|^{2_s^*-1} \right)
      |b| \, d\tau \\
    &\leq C \left( |a| + |b| + 2^{2_s^*-1} \left(
    |a|^{2_s^*-1}+|b|^{2_s^*-1} \right) \right) |b| \\
    &\leq C \left( |a| + |b| + 2^{2_s^*} \left(
      |a|^{2_s^*-1}+|b|^{2_s^*-1} \right) \right) |b| \\
    &\leq C \left( |ab| + b^2 + 2^{2_s^*} \left( |a|^{2_s^*-1}|b|+|b|^{2_s^*}
    \right) \right) .
  \end{align*}
  We have used that $\tau \leq 1$ and the convexity inequality
  \begin{gather*}
    |a+b|^{2_s^*-1} \leq 2^{2_s^*-1} \left(
      |a|^{2_s^*-1}+|b|^{2_s^*-1} \right).
  \end{gather*}
  Now we use Young's inequality twice:
  \begin{align*}
    |ab| &\leq \varepsilon \frac{a^2}{2} + \frac{1}{2\varepsilon}
           |b|^2 \\
    |a|^{2_s^*-1}|b| &\leq \eta^{\frac{2_s^*}{2_s^*-1}}
                       \frac{|a|^{2_s^*}}{\frac{2_s^*}{2_s^*-1}} +
                       \frac{1}{\eta^{2_s^*}} \frac{|b|^{2_s^*}}{2_s^*}.
  \end{align*}
  Hence, choosing
  \begin{gather*}
    \eta = \varepsilon^{\frac{2_s^*-1}{2_s^*}},
  \end{gather*}
  we get
  \begin{gather*}
  |ab| + b^2 + 2^{2_s^*} \left( |a|^{2_s^*-1}|b|+|b|^{2_s^*}
    \right) \leq \varepsilon \frac{a^2}{2} + \frac{1}{2\varepsilon}
              b^2 + b^2 + 2^{2_s^*} \left(
              |a|^{2_s^*-1}|b|+|b|^{2_s^*}\right)  \\
    \leq \varepsilon C \left( a^2 + |2a|^{2_s^*} \right) + C \left[
      \left(1+\varepsilon^{-1} \right) b^2 + \left( 1+
      \varepsilon^{1-2_s^*} \right) |2b|^{2_s^*} \right] \\
    = \varepsilon \varphi(a) + \psi_\varepsilon (b).
  \end{gather*}
  Applying \cite[Theorem 2]{MR699419} with $g_n=u_n-u$ and $f=u$ we
  have the assertion.
\end{proof}

\section{Behavior of the map~$m \mapsto E_m$} \label{section3}

Under our standing assumptions~$(f_0)$--$(f_4)$, for every $m>0$
we can define the least level of energy
\begin{gather*}
  E_m=\inf_{u \in \Pm} I(u).
\end{gather*}
This section is devoted to the analysis of the quantity \(E_m\) as a
\emph{function} of \(m>0\).
\begin{lemma} \label{lemma7}
  If $(f_0)$--$(f_4)$ hold true, then
  $m \mapsto E_m$ is continuous.
\end{lemma}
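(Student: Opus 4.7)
The plan is to prove continuity of $m \mapsto E_m$ by establishing both semicontinuity inequalities at a fixed $m > 0$ along an arbitrary sequence $m_n \to m$. I will use repeatedly the scaling factor $t_n := \sqrt{m_n/m} \to 1$ together with the fiber map $\rho(\cdot)$ from Lemma~\ref{lemma4}, and the elementary identity $\rho * (\tau u) = \tau (\rho * u)$ for any scalar $\tau$.

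For $\limsup_n E_{m_n} \leq E_m$, fix $\varepsilon > 0$, pick $u \in \Pm$ with $I(u) < E_m + \varepsilon$, and set $u_n := t_n u \in S_{m_n}$. Then $u_n \to u$ in $\Hs$; Lemma~\ref{lemma4}\,$(i)$ yields a unique $\rho_n := \rho(u_n)$ with $\rho_n * u_n \in \mathcal{P}_{m_n}$; and since $u \in \Pm$ forces $\rho(u) = 0$, Lemma~\ref{lemma4}\,$(iii)$ gives $\rho_n \to 0$. Hence $\rho_n * u_n \to u$ in $\Hs$, so $E_{m_n} \leq I(\rho_n * u_n) \to I(u) < E_m + \varepsilon$.

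For $\liminf_n E_{m_n} \geq E_m$, pick $v_n \in \mathcal{P}_{m_n}$ with $I(v_n) \leq E_{m_n} + 1/n$. The upper bound just established shows that $I(v_n)$ is bounded, so Lemma~\ref{lemma5}\,$(iv)$—whose proof is robust under small perturbations of the mass since it only relies on an ambient $L^2$-bound and a vanishing/non-vanishing dichotomy—implies that $(v_n)$ is bounded in $\Hs$. Rescale $w_n := (1/t_n) v_n \in S_m$, let $\rho_n := \rho(w_n)$ so that $\rho_n * w_n \in \Pm$, and observe that $\rho_n * w_n = (1/t_n)(\rho_n * v_n)$. Setting $\phi_n := \rho_n * v_n$ and using $\rho(v_n) = 0$ with Lemma~\ref{lemma4}\,$(ii)$ one has $I(\phi_n) \leq I(v_n)$; therefore
\begin{gather*}
  E_m \leq I(\rho_n * w_n) = I\bigl((1/t_n)\phi_n\bigr) = I(\phi_n) + \bigl[I((1/t_n)\phi_n) - I(\phi_n)\bigr] \leq I(v_n) + o(1),
\end{gather*}
provided $(\phi_n)$ is bounded in $\Hs$, since then the local Lipschitz character of $I$ on bounded sets kills the bracketed difference as $t_n \to 1$.

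The main obstacle is showing that $(\rho_n)_n$ is bounded, from which boundedness of $(\phi_n)$ follows. For a lower bound, Lemma~\ref{lemma5}\,$(ii)$ applied to $\rho_n * w_n \in \Pm$ gives $e^{\rho_n s}[w_n]_{\Hs} = [\rho_n * w_n]_{\Hs} \geq \delta_m > 0$, which combined with an upper bound on $[w_n]_{\Hs}$ bounds $\rho_n$ below. For an upper bound, note that $I(\rho_n * w_n) = \max_\rho I(\rho * w_n) \geq I(w_n)$ is uniformly bounded; coercivity on $\Pm$ (Lemma~\ref{lemma5}\,$(iv)$) then controls $\|\rho_n * w_n\|_{\Hs}$, and combining this with a uniform lower bound on $[w_n]_{\Hs}$—itself coming from Lemma~\ref{lemma5}\,$(ii)$ applied to $v_n \in \mathcal{P}_{m_n}$, where the constant $\delta$ of Lemma~\ref{lemma1}\,$(i)$ is chosen for the fixed ball $B_{m+1}$—produces the upper bound on $\rho_n$. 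Combining the two semicontinuity inequalities gives $E_{m_n} \to E_m$.
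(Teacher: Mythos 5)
Your upper-semicontinuity step is sound and follows the paper closely, and the reduction of lower semicontinuity to the boundedness of $(\phi_n)=(\rho_n * v_n)$ — hence to the boundedness from above of $\rho_n=\rho(w_n)$ — is a legitimate reformulation. Using the multiplicative rescaling $w_n=(1/t_n)v_n$ in place of the paper's spatial dilation $\tilde{v}_k=v_k(\cdot/t_k)$ is a genuine variant (the identity $\rho*(\tau u)=\tau(\rho*u)$ plays exactly the role that $\rho*(v(\cdot/t))=(\rho*v)(\cdot/t)$ plays in the paper), and your lower bound on $\rho_n$ via Lemma~\ref{lemma5}\,$(ii)$ is correct.

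The gap is in the upper bound on $\rho_n$. You assert that "$I(\rho_n*w_n)=\max_\rho I(\rho*w_n)\geq I(w_n)$ is uniformly bounded"; but this chain only delivers a \emph{lower} bound on $I(\rho_n*w_n)$, and coercivity needs an \emph{upper} bound. No such bound is available a priori. If you try to obtain it from $I(\rho_n*w_n)=I((1/t_n)\phi_n)\leq I(\phi_n)+o(1)\leq I(v_n)+o(1)$, the $o(1)$ term (by your own local-Lipschitz argument) requires $(\phi_n)$ bounded in $\Hs$ — which requires $\rho_n$ bounded above, the very thing you are trying to prove. The reasoning is circular. The paper closes this gap by a completely different mechanism, and it is the real content of the lemma: it first shows (its Claim~2) that the rescaled sequence is \emph{non-vanishing} — if it vanished, then $v_k\to 0$ in $L^{2+4s/N}(\R^N)$, whence $P(v_k)=0$ together with Lemma~\ref{lemma1}\,$(ii)$ would give $[v_k]_{\Hs}^2=\frac{N}{2s}\int\tilde F(v_k)\,dx\to 0$, contradicting the (robustified) Remark~\ref{oss1}/Lemma~\ref{lemma5}\,$(ii)$. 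Non-vanishing produces translates converging a.e.\ to a nonzero limit, and then the Fatou argument underlying \eqref{eq11} shows that $\rho(\tilde v_k)\to+\infty$ would force $e^{-2\rho s}I\bigl(\rho(\tilde v_k)*\tilde v_k(\cdot+y_k)\bigr)\to-\infty$, contradicting $I(\rho(u)*u)\geq 0$ from Lemma~\ref{lemma4}\,$(ii)$. This non-vanishing/Fatou dichotomy is the missing idea; without it, your coercivity route has nothing to grip.
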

\begin{proof}
  Let $m>0$ and $(m_k)_k \subset \R$ such that $m_k \to m$ in $\R$. We
  want to show that $E_{m_k} \to E_m$ as $k \to +\infty$. Firstly, we
  will prove that
  \begin{equation} \label{eq14}
    \limsup_{k\to +\infty} E_{m_k} \leq
    E_m.
  \end{equation}
  For any $u \in \Pm$ we define
  \begin{gather*}
    u_k:=\sqrt{\frac{m_k}{m}} u \in S_{m_k}, \quad k \in \N.
  \end{gather*}
  It is easy to see that $u_k \to u$ in $\Hs$, thus, by Lemma
  \ref{lemma4} $(iii)$ we get

  \noindent $\lim_{k \to +\infty} \rho(u_k)=\rho(u)=0$. Therefore
  \begin{gather*}
    \rho(u_k)*u_k \to \rho(u)*u=0 \quad \hbox{in $\Hs$}
  \end{gather*}
  as $k \to +\infty$ and as a consequence
\begin{gather*}
\limsup_{k \to +\infty} E_{m_k} \leq \limsup_{k \to +\infty}I(\rho(u_k)*u_k)=I(u).
\end{gather*}
Since this holds for any~$u$, we obtain~\eqref{eq14}. The next step
consists in proving
\begin{equation} \label{eq15}
\liminf_{k \to +\infty} E_{m_k} \geq E_m.
\end{equation} 
From the definition of $E_{m_k}$, it follows that for every
$ k \in \N$ there exists $v_k \in \mathcal{P}_{m_k}$ such that
\begin{equation} \label{eq16}
I(v_k) \leq E_{m_k}+ \frac{1}{k}.
\end{equation}
We set 
\begin{gather*}
  t_k:=\left( \frac{m}{m_k} \right)^{\frac{1}{N}} \quad \hbox{and}
  \quad \tilde{v}_k:=v_k\left( \frac{\cdot}{t_k} \right) \in S_m.
\end{gather*}
By Lemma \ref{lemma4} and \eqref{eq16} we get
\begin{align*}
  E_m & \leq I(\rho(\tilde{v}_k)*\tilde{v}_k)\leq I(\rho(v_k)*\tilde{v}_k)+\left| I(\rho(\tilde{v}_k)*\tilde{v}_k) - I(\rho(\tilde{v}_k)*v_k) \right|\\
      & \leq I(v_k) + \left| I(\rho(\tilde{v}_k)*\tilde{v}_k) -
        I(\rho(\tilde{v}_k)*v_k) \right|  \\
      &\leq E_{m_k} + \frac{1}{k} + \left| I(\rho(\tilde{v}_k)*\tilde{v}_k) - I(\rho(\tilde{v}_k)*v_k) \right| \\
      & =: E_{m_k} + \frac{1}{k} + C(k). 
\end{align*}
In order to prove~\eqref{eq15} we show that
\begin{equation*} \label{eq17}
\lim_{k \to +\infty} C(k)=0.
\end{equation*}
Indeed, as a first step we notice that
$\rho * \left(v\left( \frac{\cdot}{t}\right)
\right)=(\rho*v)\left(\frac{\cdot}{t}\right)$, and after a change of
variable we get
\begin{align*}
  C(k) &=\left| \frac{1}{2}\left(t_k^{N-2s}-1\right)\left[\rho(\tilde{v}_k)*v_k\right]_{\Hs}^2-\left(t_k^N-1\right)\int_{R^N}F(\rho(\tilde{v}_k)*v_k) \, dx \right| \\
       & \leq  \frac{1}{2}\left|t_k^{N-2s}-1\right|\left[\rho(\tilde{v}_k)*v_k\right]_{\Hs}^2+\left|t_k^N-1\right|\int_{R^N}\left|F(\rho(\tilde{v}_k)*v_k)\right| \, dx \\
       &=: \frac{1}{2}\left|t_k^{N-2s}-1\right|A(k)+\left|t_k^{N}-1\right|B(k).
\end{align*}
Since $t_k \to 1$ as $k\to +\infty$, it suffices to prove that
\begin{equation}\label{eq18}
  \limsup_{k\to +\infty} A(k) < \infty, \quad \limsup_{k \to +\infty} B(k)<\infty.
\end{equation}
We divide the proof of \eqref{eq18} in three claims.

\medskip 

\emph{Claim 1:} $(v_k)_k$ is bounded in $\Hs$.

Recalling \eqref{eq14} and \eqref{eq16} we have that
\begin{gather*}
\limsup_{k \to +\infty} I(v_k) \leq E_m.
\end{gather*}
Thus, observing that $v_k \in \mathcal{P}_{m_k}$ and $m_k \to m$ if the claim does not hold, we obtain a contradiction with lemma \ref{lemma5} $(iv)$.

\medskip

\textit{Claim 2:} $(\tilde{v}_k)_k$ is bounded in $\Hs$, and there are a sequence~$(y_k)_k \subset \R$ and $v \in \Hs \setminus \lbrace 0 \rbrace$ such that~$\tilde{v}(\cdot + y_k) \to v$ a.e. in $\R^N$  up to a subsequence .

To see the boundedness of $(\tilde{v}_k)_k$ it suffices to notice that $t_k \to 1$ and the statement follows by claim 1. Now, we set
\begin{gather*}
\alpha=\limsup_{k \to +\infty} \sup_{y \in \R^N} \int_{B(y,1)} |\tilde{v}_k|^2 \, dx.
\end{gather*} 
If $\alpha=0$, by \cite[Lemma II.4]{MR3059423} we get $\tilde{v}_k \to 0$ in $L^{2+\frac{4s}{N}}(\R^N)$. As a consequence we have that
\begin{gather*}
\int_{\R^N}|v_k|^{2+\frac{4s}{N}} \, dx=\int_{\R_N} |\tilde{v}_k(t_k\cdot)|^{2+\frac{4s}{N}} \, dx = t_k^{-N}\int_{\R^N} |\tilde{v}_k|^{2+ \frac{4s}{N}} \, dx \to 0
\end{gather*}
as $k \to +\infty$, and since $P(v_k)=0$, by Lemma \ref{lemma2} $(i)$, we deduce that
\begin{gather*}
\left[v_k\right]_{\Hs}^2=\frac{N}{2s} \int_{\R^N} \tilde{F}(v_k) \, dx \to 0.
\end{gather*}
In this case, by virtue of Remark \ref{oss1}, we see that
\begin{gather*}
0=P(v_k)\geq \frac{1}{2}\left[v_k\right]_{\Hs}^2,
\end{gather*}
which is admissible only if $v_k$ in constant. But this is in contradiction
with the fact that $v_k \in \mathcal{P}_{m_k}$. Hence $\alpha$ must be
strictly positive.

\medskip 

\textit{Claim 3:} $\limsup_{k \to +\infty} \rho(\tilde{v}_k)<\infty$.

By contradiction we assume that up to a subsequence
$\rho(\tilde{v}_k) \to \infty$ as $k \to +\infty$. By Claim 2 we can
suppose the existence of a sequence $(y_k)_k \subset \R^N$ and
$v \in \Hs \setminus \lbrace 0\rbrace $ such that
\begin{equation} \label{eq19}
\tilde{v}_k(\cdot +y_k) \to v \quad \hbox{a.e. in} \, \R^N.
\end{equation}
Instead, by Lemma $\ref{lemma4}$ we get 
\begin{equation} \label{eq20}
\rho(\tilde{v}_k(\cdot+y_k))=\rho(\tilde{v}_k) \to \infty
\end{equation}
and
\begin{equation} \label{eq21}
I(\rho(\tilde{v}_k(\cdot+y_k))*\tilde{v}_k(\cdot+y_k))\geq 0.
\end{equation}
Now, taking into account \eqref{eq19}, \eqref{eq20}, \eqref{eq21} and
arguing similarly as we have already done to prove \eqref{eq11} we
have a contradiction. The proof concludes observing that by Claims 1
and 3
\begin{equation} \label{eq22}
\limsup_{k\to +\infty} \Vert \rho(\tilde{v}_k)*v_k \Vert_{\Hs} < \infty.
\end{equation}
Hence, by virtue of $(f_0)-(f_2)$ and \eqref{eq22}, \eqref{eq18} holds true.
\end{proof}
The next result provides a weak monotonicity property for \(E_m\).
\begin{lemma} \label{lemma8}
If $(f_0)-(f_4)$ hold, then $m \mapsto E_m$ is non-increasing in $ (0, \infty)$.
\end{lemma}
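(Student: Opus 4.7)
The plan is to establish $E_{m_2}\le E_{m_1}$ for $0<m_1<m_2$ by producing elements of $\mathcal{P}_{m_2}$ with arbitrarily small excess energy over $E_{m_1}$. Given $\varepsilon>0$, I start from a near-minimizer $u\in\mathcal{P}_{m_1}$ with $I(u)<E_{m_1}+\varepsilon$ and build $v\in\mathcal{P}_{m_2}$ with $I(v)\le I(u)+o(1)$, by perturbing $u$ with a small, far-away ``bump'' carrying the missing mass $m_2-m_1$, and then projecting the resulting element of $S_{m_2}$ onto $\mathcal{P}_{m_2}$ via the fiber map of Lemma~\ref{lemma4}.

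To construct the bump, I fix $\phi\in C_c^\infty(\R^N)$ with $\|\phi\|_{L^2}^2=m_2-m_1$ and set $\psi_\sigma:=\sigma*\phi$. Since $[\psi_\sigma]_{\Hs}=e^{\sigma s}[\phi]_{\Hs}\to 0$ as $\sigma\to-\infty$, Lemma~\ref{lemma1}(i), Lemma~\ref{lemma2}(i), and Remark~\ref{oss1} jointly imply $I(\psi_\sigma)\to 0^+$ and $0<P(\psi_\sigma)\le [\psi_\sigma]_{\Hs}^2\to 0$. I then smoothly truncate $u$ at scale $R$ to obtain a compactly supported $u_R\to u$ in $\Hs$, and translate the bump by $y$ with $|y|$ large enough that $\supp u_R$ and $\supp\psi_\sigma(\cdot-y)$ are disjoint; after renormalising to mass exactly $m_2$,
\[
w=\frac{\sqrt{m_2}}{\|u_R+\psi_\sigma(\cdot-y)\|_{L^2}}\bigl(u_R+\psi_\sigma(\cdot-y)\bigr)\in S_{m_2}.
\]
In the successive limits $R\to\infty$, $|y|\to\infty$, and finally $\sigma\to-\infty$, the renormalising constant tends to $1$, the $F$- and $\tilde F$-integrals decouple because of the disjoint supports (with Lemma~\ref{lemma6} handling the passage $u_R\to u$), and the Gagliardo cross term
\[
\iint\frac{\bigl(u_R(x)-u_R(z)\bigr)\bigl(\psi_\sigma(x-y)-\psi_\sigma(z-y)\bigr)}{|x-z|^{N+2s}}\,dx\,dz
\]
tends to zero, since after expansion it reduces to integrals over disjoint regions whose mutual distance blows up against the decaying kernel $|x-z|^{-N-2s}$. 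Altogether $I(w)\to I(u)$ and $P(w)\to 0$.

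Finally I set $v:=\rho(w)*w\in\mathcal{P}_{m_2}$. Lemma~\ref{lemma4} provides $\tfrac{d}{d\rho}I(\rho*w)=sP(\rho*w)$, whence
\[
I(v)-I(w)=\int_0^{\rho(w)}sP(r*w)\,dr.
\]
Since $\rho(w)$ is the unique zero of $r\mapsto P(r*w)$ and $P(w)\to 0$, one expects $\rho(w)\to 0$; then both factors in the integral above are small, yielding $I(v)\to I(w)\to I(u)$ and consequently $E_{m_2}\le I(u)<E_{m_1}+\varepsilon$. Letting $\varepsilon\to 0$ gives the claim. The main obstacle is precisely this step $\rho(w)\to 0$: the continuity of the fiber map in Lemma~\ref{lemma4}(iii) is stated only for strong $\Hs$ convergence, whereas $w$ is close to $u$ in Gagliardo seminorm but not in $L^2$ (the bump carries fixed mass $m_2-m_1$). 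A quantitative, parameter-uniform strengthening of that continuity is therefore needed, in which the strict maximality of $r=0$ for the fiber of $u\in\mathcal{P}_{m_1}$, together with $(f_4)$ and the splitting estimates of Lemmas~\ref{lemma1} and~\ref{lemma6} for the perturbation, is leveraged to show that $\rho(w)$ cannot drift away from zero once $P(w)$ is sufficiently small.
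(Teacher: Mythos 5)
Your construction (truncate $u$, attach a distant bump carrying the extra mass, then project onto $\mathcal{P}_{m_2}$ via the fiber map) is the same one the paper uses, so the geometric setup is right. But the final step you propose — reducing $I(v)-I(w)$ to $\int_0^{\rho(w)}sP(r*w)\,dr$ and then proving $\rho(w)\to 0$ — is a genuine gap, and you correctly identify it as such without closing it. The difficulty is real: $w$ is close to $u$ only in the Gagliardo seminorm and in the $F$- and $\tilde F$-integrals, not in $H^s$ (the bump carries fixed mass), so Lemma~\ref{lemma4}$(iii)$ does not apply, and a ``quantitative, parameter-uniform strengthening'' of it is precisely what one would have to prove from scratch. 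Moreover, the fiber map depends on $\rho\mapsto\int\tilde F(e^{N\rho/2}w)$; even when $\int\tilde F(\psi_\sigma)$ is small, rescaling by $e^{N\rho/2}$ can amplify the bump's contribution, so proximity of the fibers at $\rho=0$ does not automatically control them uniformly in $\rho$. Nothing in your argument rules out $\rho(w)$ drifting, so the claim $I(v)\to I(u)$ is not established.

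The paper avoids this entirely by \emph{never} showing that the projection parameter tends to zero. Write $\omega_\lambda=u_\delta+\lambda*\tilde v$ with disjoint supports, so that
\begin{gather*}
I\bigl(\rho(\omega_\lambda)*\omega_\lambda\bigr)
  = I\bigl(\rho(\omega_\lambda)*u_\delta\bigr)
  + I\bigl((\rho(\omega_\lambda)+\lambda)*\tilde v\bigr).
\end{gather*}
The first term is controlled \emph{without} any information on the actual value of $\rho(\omega_\lambda)$: by Lemma~\ref{lemma4}$(ii)$, $\rho\mapsto I(\rho*u_\delta)$ attains its maximum at $\rho(u_\delta)$, hence $I(\rho(\omega_\lambda)*u_\delta)\le I(\rho(u_\delta)*u_\delta)\le I(u)+\varepsilon/4$ once $\delta$ is small (here Lemma~\ref{lemma4}$(iii)$ \emph{does} apply, since $u_\delta\to u$ strongly in $\Hs$). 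For the second term one only needs $\rho(\omega_\lambda)$ to be bounded \emph{above} as $\lambda\to-\infty$ — a much weaker statement than $\rho(\omega_\lambda)\to 0$, which is proved by a Fatou-type contradiction (as in \eqref{eq11}). Then $\rho(\omega_\lambda)+\lambda\to-\infty$, so $\left[(\rho(\omega_\lambda)+\lambda)*\tilde v\right]_{\Hs}\to 0$ and the bump's energy vanishes by Lemma~\ref{lemma1}$(i)$. This decoupling of the two support pieces, combined with the maximality of the fiber and the shift identity $\rho*(\lambda*\tilde v)=(\rho+\lambda)*\tilde v$, is the missing ingredient that replaces the uncontrolled claim $\rho(w)\to 0$ in your write-up.
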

\begin{proof}
It suffices to show that for every $\varepsilon >0$ and $m, \, m' >0$ with $m> m'$ we have
\begin{equation} \label{eq23}
E_m \leq E_{m'} + \frac{\varepsilon}{2}.
\end{equation}
Now, we take $\chi \in C^{\infty}_c(\R^N)$ radial such that
\begin{gather*}
\chi(x)=
\begin{cases}
1 & |x| \leq 1 \\
\left[0,1\right] & 1<|x|\leq 2 \\
0 & |x|>2
\end{cases}
\end{gather*}
and $u \in \mathcal{P}_{m'}$. For every $\delta>0$ we set $u_{\delta}(x)=u(x)\chi(\delta x)$. By
a result of Palatucci \emph{et al.}, see~\cite[Lemma 5 of Section
6.1]{MR3216834}, we know that $u_{\delta} \to u$ as $\delta \to 0^+$,
and using Lemma \ref{lemma4} $(iii)$ we obtain
\begin{gather*}
\lim_{\delta \to 0^+} \rho(u_{\delta})=\rho(u)=0.
\end{gather*} 
As a consequence of that, we obtain
\begin{equation} \label{eq24}
\rho(u_{\delta})*u_{\delta} \to \rho(u)*u \quad \hbox{in} \, \Hs
\end{equation}
as $\delta \to 0^+$. Now, fixing $\delta>0$ small enough, by virtue of
\eqref{eq24} we have
\begin{equation} \label{eq25}
I(\rho(u_{\delta})*u_{\delta}) \leq I(u) + \frac{\varepsilon}{4}.
\end{equation}
After that, we choose $v \in C^{\infty}_c(\R^N)$ with $\supp(v) \subset B\left(0,1+\frac{4}{\delta}\right)\setminus B\left(0,\frac{4}{\delta}\right)$ and we set
\begin{gather*}
\tilde{v}=\frac{m-\Vert u_{\delta} \Vert^2_{L^2(\R^N)}}{\Vert v \Vert^2_{L^2(\R^N)}}
\end{gather*}
For every $\lambda \leq 0$ we also define
$ \omega_{\lambda}=u_{\delta}+\lambda *\tilde{v}$. We observe that
choosing $\lambda$ appropriately we have
\begin{gather*}
\supp (u_{\delta}) \cap \supp (\lambda * \tilde{v})= \emptyset
\end{gather*}
thus $\omega_{\lambda} \in S_m$.

\textit{Claim:} $\rho(\omega_{\lambda})$ is upper bounded as
$\lambda \to - \infty$.

If the claim does not hold we observe that by lemma \ref{lemma4}
$(ii)$ $I(\rho(\omega_{\lambda})*\omega_{\lambda})\geq 0$ and that
$\omega_{\lambda} \to u_{\delta}$ a.e. in $\R^N$ as
$\lambda \to - \infty$. Hence, arguing as we have already done to
obtain \eqref{eq11} we reach a contradiction. Then the claim must
hold.

By virtue of the claim
\begin{gather*}
  \rho(\omega_{\lambda}) + \lambda \to - \infty \quad \hbox{as} \,
  \lambda \to - \infty,
\end{gather*}
thus 
\begin{gather*}
  \left[ (\rho(\omega_{\lambda})+\lambda) * \tilde{v}\right]_{\Hs}^2=
  e^{2s (\rho (\omega_{\lambda})+\lambda)}
  \left[\tilde{v}\right]^2_{\Hs} \to 0
\end{gather*}
implying
\begin{align*}
  \Vert (\rho(\omega_{\lambda})+\lambda) * \tilde{v} \Vert_{L^{2+\frac{4s}{N}}(\R^N)} 
  \leq C \Vert (\rho(\omega_{\lambda})+\lambda) * \tilde{v} \Vert_{L^2(\R^N)} \left[(\rho(\omega_{\lambda})+\lambda) * \tilde{v}\right]_{\Hs} \to 0.
\end{align*}
As a consequence, by Lemma \ref{lemma1} $(ii)$, for a suitable
$\lambda$
\begin{equation} \label{eq26}
I((\rho(\omega_{\lambda})+\lambda) * \tilde{v})\leq \frac{\varepsilon}{4}.
\end{equation}
Finally, by Lemma \ref{lemma4} and using \eqref{eq23}, \eqref{eq25}
and \eqref{eq26} it easy to see that
\begin{align*}
  E_m & \leq I(\rho(\omega_{\lambda})*\omega_{\lambda})=I(\rho(\omega_{\lambda})*u_{\delta})+I(\rho(\omega_{\lambda})*(\lambda*\tilde{v})) \\
      & \leq I(\rho(u_{\delta})*u_{\delta})+I((\rho(\omega_{\lambda})+\lambda)*\tilde{v}) \\
      & \leq I(u)+ \frac{\varepsilon}{4}+ \frac{\varepsilon}{4} \leq E_{m'}+\varepsilon
\end{align*}
completing the proof.
\end{proof}
The strict monotonicity of \(E_m\) holds true only locally, as we now show.
\begin{lemma} 
  Assume $(f_0)-(f_4)$ hold true. Moreover, let $u \in S_m$ and
  $\mu \in \R$ such that
  \begin{gather*}
    \left(-\Delta\right)^s+\mu u=f(u)
  \end{gather*}
  and $I(u)=E_m$. Then $E_m >E_{m'}$ for every $m'>m$ close enough if
  $\mu >0$ and for any $m'<m$ close enough if $\mu <0$.
\end{lemma}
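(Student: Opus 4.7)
The plan is to construct, for every $m'$ sufficiently close to $m$ on the correct side, an explicit competitor in $\mathcal{P}_{m'}$ whose energy is strictly below $E_m$. I would set $u_\lambda:=\lambda u$ for $\lambda>0$ close to $1$, so that $u_\lambda\in S_{\lambda^2 m}$. By Lemma~\ref{lemma4}(i) there is a unique $\rho(u_\lambda)\in\R$ with $\rho(u_\lambda)*u_\lambda\in\mathcal{P}_{\lambda^2 m}$ (the scaling map preserves the $L^2$-norm), hence
\[
E_{\lambda^2 m}\leq\Phi(\lambda):=I\bigl(\rho(u_\lambda)*u_\lambda\bigr).
\]
Since $u_1=u\in\mathcal{P}_m$ we have $\rho(u_1)=0$ and $\Phi(1)=I(u)=E_m$, and Lemma~\ref{lemma4}(iii) yields $\rho(u_\lambda)\to 0$ as $\lambda\to 1$.

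Introduce the auxiliary function $\phi(\lambda,\rho):=I(\rho*u_\lambda)$. Changing variables in the Gagliardo seminorm and in $\int F$ gives
\[
\phi(\lambda,\rho)=\frac{\lambda^2 e^{2s\rho}}{2}\left[u\right]_{\Hs}^2-e^{-N\rho}\int_{\R^N}F\bigl(\lambda e^{N\rho/2}u\bigr)\,dx,
\]
which is jointly continuous; moreover $\partial_\lambda\phi$ is jointly continuous thanks to the growth controlled by $(f_1)$ and $(f_2)$, which supplies the required domination for differentiation under the integral. Direct computation yields $\partial_\lambda\phi(1,0)=\left[u\right]_{\Hs}^2-\int_{\R^N}f(u)u\,dx$, and testing the equation $(-\Delta)^s u+\mu u=f(u)$ against $u$ produces $\int f(u)u\,dx=\left[u\right]_{\Hs}^2+\mu m$. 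Substituting gives the key identity
\[
\partial_\lambda\phi(1,0)=-\mu m.
\]

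By Lemma~\ref{lemma4}(ii), $\rho=0$ is the unique maximizer of $\phi(1,\cdot)$, so $\phi(1,\rho(u_\lambda))\leq\phi(1,0)$ and
\[
\Phi(\lambda)-\Phi(1)\leq\phi(\lambda,\rho(u_\lambda))-\phi(1,\rho(u_\lambda))=\int_1^\lambda\partial_\lambda\phi\bigl(\tau,\rho(u_\lambda)\bigr)\,d\tau.
\]
Since $\rho(u_\lambda)\to 0$ as $\lambda\to 1$ and $\partial_\lambda\phi$ is jointly continuous, the right-hand side has the same sign as $(\lambda-1)\partial_\lambda\phi(1,0)=-(\lambda-1)\mu m$ for $\lambda$ sufficiently close to (and distinct from) $1$. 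Therefore, if $\mu>0$ we take $\lambda$ slightly greater than $1$, and if $\mu<0$ we take $\lambda$ slightly less than $1$; in either case $\Phi(\lambda)<E_m$ and hence $E_{m'}\leq\Phi(\lambda)<E_m$ for $m':=\lambda^2 m$ in the appropriate one-sided neighborhood of $m$. The main delicate point is the envelope-type inequality for $\Phi$ above, but since only the upper bound on $\Phi(\lambda)-\Phi(1)$ is needed, it is a routine consequence of elementary calculus combined with the continuity of the fiber map supplied by Lemma~\ref{lemma4}; no implicit function theorem is required.
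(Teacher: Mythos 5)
Your proposal is correct and follows essentially the same strategy as the paper's own proof: introduce the two-parameter family $(\lambda,\rho)\mapsto\rho*(\lambda u)$, compute the $\lambda$-derivative at $(1,0)$ using the equation (getting $-\mu m$), and combine this with the continuity of the fiber map $\lambda\mapsto\rho(\lambda u)$ to compare $E_{\lambda^2 m}$ with $E_m$. The only cosmetic difference is that you phrase the one-sided comparison via the fundamental theorem of calculus and a sign argument on the integrand, whereas the paper invokes the Mean Value Theorem after observing $\partial_t\alpha<0$ on a small rectangle; these are interchangeable.
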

\begin{proof}
  Let $t >0$ and $\rho \in \R$. Defining
  $u_{t,\rho}:=u(\rho*(tu))\in S_{mt^2}$ and
  \begin{gather*}
    \alpha(t,\rho):=I(u_{t,\rho})=\frac{1}{2}t^2 e^{2 \rho s}\left[
      u\right]_{\Hs}^2-e^{-N \rho}\int_{\R^N}F(te^{{\frac{N
          \rho}{2}}}u) \, dx
\end{gather*}
it is straightforward to verify that
\begin{align*}
  \frac{\partial}{\partial t} \alpha (t, \rho)&= t e^{2 \rho s}\left[
                                                u\right]_{\Hs}^2-e^{-N
                                                \rho}\int_{\R^N} f
                                                \left( te^{\frac{N
                                                \rho}{2}}u \right) e^{\frac{N \rho}{2}}u \, dx \\
                                              &=t^{-1}I'(u_{t,\rho})\left[u_{t, \rho} \right].
\end{align*}
In the case~$\mu>0$, we observe that~$u_{t,\rho}\to u$ in $\Hs$ as
$(t,\rho)\to (1,0)$. Moreover, we notice that
\begin{gather*}
I'(u)\left[u\right]=-\mu \Vert u \Vert_{L^2(\R^N)}^2=-\mu m <0
\end{gather*}
and so, choosing $\delta>0$ small enough we have
\begin{gather*}
\frac{\partial\alpha}{\partial t} (t,\rho)<0 \quad \hbox{for any} \, (t,\rho) \in(1,1+\delta) \times \left[-\delta, \delta \right].
\end{gather*}
Using the Mean Value Theorem, there exists $\xi \in (1,t)$ such that 
\begin{gather*}
\frac{\partial\alpha}{\partial t}  (\xi,\rho)=\frac{\alpha(t,\rho)-\alpha(1,\rho)}{t-1}
\end{gather*}
whenever $(t,\rho) \in (1,1+\delta) \times \left[-\delta, \delta \right]$, hence
\begin{equation} \label{eq27}
\alpha(t,\rho)=\alpha(1,\rho)+(t-1)\frac{\partial}{\partial t} \alpha (\xi,\rho)<\alpha(1,\rho).
\end{equation} 
Since by Lemma \ref{lemma4} $(iii)$ $\rho(tu) \to \rho(u)=0$ as $t \to 1^+$, setting for any $m'>m$ close enough to $m$
\begin{gather*}
t:=\sqrt{\frac{m'}{m}} \in (1,1+\delta) \quad \hbox{and} \quad \rho:=\rho(tu) \in \left[-\delta, \delta \right],
\end{gather*}
and using \eqref{eq27} together with Lemma \ref{lemma4} $(ii)$ we obtain that
\begin{gather*}
E_m \leq \alpha(t,\rho(tu))<\alpha(1,\rho(tu))=I(\rho(tu)*u)\leq I(u)=E_m.
\end{gather*}
The proof for $\mu<0$ is similar, and we omit it.
\end{proof}
As a direct consequence of the previous two lemmas we have the following result.
\begin{lemma} \label{lemma10}
Assume $(f_0)-(f_4)$ hold true. In addition let $u \in S_m$ and $\mu \in \R$ such that 
\begin{gather*}
(-\Delta)^s u+\mu u=f(u)
\end{gather*} 
with $I(u)=E_m$. Then $\mu \geq 0$ and if $\mu >0$ it is $E_m>E_{m'}$
for any $m'>m>0$.
\end{lemma}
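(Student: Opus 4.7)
My plan is to read Lemma~\ref{lemma10} as a short corollary of the two preceding results: the unnumbered lemma that precedes it (which gives local strict monotonicity of $m \mapsto E_m$ near a mass $m$ admitting a minimizer) and Lemma~\ref{lemma8} (global non-increasing monotonicity). The whole proof is therefore essentially a ``scissor argument'' between these two facts. There is no new analytic input needed; no Pohoza\v{e}v computation, no compactness argument, no estimate on $f$. The main content is a careful bookkeeping with inequalities.

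\emph{Step 1: Ruling out $\mu<0$.} I will argue by contradiction. Suppose $\mu<0$. The preceding lemma then supplies $\delta>0$ such that $E_m>E_{m'}$ for all $m'<m$ with $m-m'<\delta$. On the other hand, by Lemma~\ref{lemma8} the map $m'\mapsto E_{m'}$ is non-increasing on $(0,\infty)$; since $m'<m$, this forces $E_{m'}\geq E_m$. The two inequalities are in direct conflict, so $\mu\geq 0$.

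\emph{Step 2: Strict decrease beyond the local range.} Now suppose $\mu>0$. The preceding lemma provides $\delta>0$ such that
\[
E_m > E_{m''} \qquad \text{for every } m''\in (m,m+\delta).
\]
Given an arbitrary $m'>m$, I choose $m''\in(m,m+\delta)$ with $m''\leq m'$ (e.g.\ $m'':=\min\{m',m+\delta/2\}$). Applying Lemma~\ref{lemma8} to the pair $m''\leq m'$ gives $E_{m'}\leq E_{m''}$, and combining this with the strict local inequality yields
\[
E_{m'} \;\leq\; E_{m''} \;<\; E_m,
\]
which is the desired conclusion. The case $m'=m''$ is already covered by the local lemma.

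\emph{Expected difficulties.} Honestly, none of real substance: once one has both the local strict statement and the global non-increasing statement, the argument is almost automatic. The only potential pitfall is a directional slip — confusing ``non-increasing'' with ``non-decreasing,'' or picking $m''$ on the wrong side of $m$ in Step~2 — so I would be careful to state explicitly that non-increasing means $E_{m'}\leq E_{m''}$ when $m''\leq m'$, and to choose $m''$ strictly between $m$ and $m'$ so as to be within the local range granted by the previous lemma.
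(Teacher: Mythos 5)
Your proof is correct and is exactly the ``scissor argument'' the paper has in mind: the text says Lemma~\ref{lemma10} is a direct consequence of the unnamed local-monotonicity lemma preceding it and of Lemma~\ref{lemma8}, and you have simply spelled out the two elementary combinations (contradiction for $\mu<0$, chaining for $\mu>0$). Nothing is missing and nothing is redundant.
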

To make a step ahead, we describe the asymptotic behaviour of \(E_m\)
as \(m \to 0^+\) and  \(m \to +\infty\).
\begin{lemma} \label{lemma11}
Assume $(f_0)-(f_4)$ hold true, then $E_m \to +\infty$ as $m \to 0^+$.
\end{lemma}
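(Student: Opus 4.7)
The plan is to argue by contradiction: suppose there exist a sequence $m_n \to 0^+$ and a constant $M > 0$ with $E_{m_n} \leq M$, and pick $u_n \in \mathcal{P}_{m_n}$ realizing $I(u_n) \leq M + 1$. The goal is to derive $\liminf_n I(u_n) = +\infty$.

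The natural idea is to exploit the scaling $\rho * u$ (which preserves the $L^2$-norm) in order to renormalize $(u_n)$ so that its $H^s$-seminorm becomes $1$, while the information $m_n \to 0$ survives only in the $L^2$-norm. Concretely, I would set $\rho_n := \frac{1}{s}\log [u_n]_{H^s(\R^N)}$ and $v_n := (-\rho_n) * u_n$, so that
\begin{gather*}
  [v_n]_{H^s(\R^N)} = 1, \qquad \|v_n\|_{L^2(\R^N)}^2 = m_n \longrightarrow 0.
\end{gather*}
For every fixed $\rho \in \R$, the sequence $(\rho * v_n)_n$ is bounded in $H^s(\R^N)$, and by the fractional Gagliardo--Nirenberg inequality $\|w\|_{L^{2+4s/N}(\R^N)}^{2+4s/N} \leq C \|w\|_{L^2(\R^N)}^{4s/N}[w]_{H^s(\R^N)}^2$ (obtained by interpolating between $L^2(\R^N)$ and the Sobolev embedding $H^s(\R^N) \hookrightarrow L^{2^*_s}(\R^N)$) one gets
\begin{gather*}
  \|\rho * v_n\|_{L^{2+4s/N}(\R^N)}^{2+4s/N} \leq C\, m_n^{2s/N}\, e^{2\rho s} \longrightarrow 0.
\end{gather*}
Lemma~\ref{lemma1}~(ii) then yields $\int_{\R^N} F(\rho * v_n)\, dx \to 0$, whence
\begin{gather*}
  I(\rho * v_n) = \tfrac{1}{2}\, e^{2\rho s}\, [v_n]_{H^s(\R^N)}^2 - \int_{\R^N} F(\rho * v_n)\, dx \longrightarrow \tfrac{1}{2}\, e^{2\rho s}.
\end{gather*}

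To plug $u_n$ back in, note that $u_n \in \mathcal{P}_{m_n}$ forces $\rho(u_n) = 0$ by the uniqueness in Lemma~\ref{lemma4}~(i), so Lemma~\ref{lemma4}~(ii) gives $I(u_n) \geq I(\rho' * u_n)$ for every $\rho' \in \R$. The scaling identity $\rho * ((-\rho_n) * u_n) = (\rho - \rho_n) * u_n$ allows me to choose $\rho' := \rho - \rho_n$, producing $I(u_n) \geq I(\rho * v_n)$. Taking $\liminf_n$ delivers $\liminf_n I(u_n) \geq \tfrac{1}{2}\, e^{2\rho s}$ for every $\rho \in \R$, and letting $\rho \to +\infty$ contradicts $I(u_n) \leq M + 1$.

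The proof is essentially an exercise in combining the scaling Lemma~\ref{lemma4} with the compactness statement Lemma~\ref{lemma1}~(ii). The only analytic step is the smallness of $\|\rho * v_n\|_{L^{2+4s/N}(\R^N)}$, which is where the hypothesis $m_n \to 0^+$ is used decisively; everything else is bookkeeping about the scaling $\rho * u$ and its action on $I$, $[\cdot]_{H^s(\R^N)}$, and the $L^2$-norm.
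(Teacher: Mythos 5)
Your argument is correct and is essentially the paper's proof: both renormalize via $\rho_n = \frac{1}{s}\log[u_n]_{H^s(\R^N)}$ so that $[v_n]_{H^s(\R^N)}=1$, deduce smallness of $\rho * v_n$ in $L^{2+4s/N}(\R^N)$ from the vanishing $L^2$-norm by interpolation (Gagliardo--Nirenberg), apply Lemma~\ref{lemma1}~(ii), and then use $\rho(u_n)=0$ together with Lemma~\ref{lemma4}~(ii) to obtain $I(u_n)\geq \tfrac12 e^{2\rho s}+o_n(1)$ before letting $\rho\to+\infty$. The only cosmetic difference is that you phrase it as a contradiction while the paper shows directly that $I(u_n)\to+\infty$ along any sequence in the Pohozaev set with $\|u_n\|_{L^2(\R^N)}\to 0$.
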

\begin{proof}
  In order to prove the Lemma, we will show that for every sequence
  $(u_n)_n \subset \Hs \setminus \lbrace 0 \rbrace$ such that
\begin{gather*}
  P(u_n)=0 \quad \hbox{and} \quad \lim_{n \to +\infty} \Vert u_n
  \Vert_{L^2(\R^N)}=0
\end{gather*}
it must be $I(u_n) \to +\infty$. We set
\begin{gather*}
  \rho_n:=\frac{1}{s} \log \left(\left[u_n \right]_{\Hs} \right) \quad
  \hbox{and} \quad v_n:=(-\rho_n)*u_n
\end{gather*}
Trivially $\left[ v_n \right]_{\Hs}=1$ and
$\Vert v_n \Vert_{L^2(\R^N)}\to 0$. Moreover, thanks to these two
facts we also have by interpolation that $v_n \to 0$ in
$L^{2+\frac{4s}{N}}(\R^N)$, thus, by Lemma \ref{lemma1} $(ii)$ we have
\begin{gather*}
  \lim_{n\to +\infty} e^{-N \rho} \int_{\R^N} F\left(e^{\frac{N
        \rho}{2}}v_n \right)\, dx=0.
\end{gather*}
Since $P(\rho_n*v_n)=P(u_n)=0$, using Lemma \ref{lemma4} $(i)$ and
$(ii)$ we obtain that
\begin{align*}
  I(u_n)&=I(\rho_n*v_n) \geq I(\rho*v_n)= \frac{1}{2} e^{2 \rho s} -e^{N \rho}\int_{\R^N} F\left(e^{\frac{N \rho}{2}}v_n \right) \, dx\\
        &=\frac{1}{2}e^{2 \rho s}+ o_n(1).
\end{align*}
Since $\rho$ is arbitrary, we get the statement as $\rho \to +\infty$.
\end{proof}
\begin{lemma} \label{lemma12}
Assume $(f_0)-(f_4)$ and $(f_6)$. Then $E_m \to 0$ as $m \to +\infty$.
\end{lemma}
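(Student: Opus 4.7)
The plan is to show that for every $\varepsilon>0$ there exists $m_\varepsilon>0$ with $E_m\le C_w\,\varepsilon^{(N-2s)/(2s)}$ whenever $m\ge m_\varepsilon$, for a constant $C_w$ independent of $\varepsilon$ and $m$; combined with the strict positivity $E_m>0$ (Lemma~\ref{lemma5}~$(iii)$) and the arbitrariness of $\varepsilon$, this gives $E_m\to 0^+$. The test family will be the simplest one: fix $w\in C_c^\infty(\R^N)$ with $\Vert w\Vert_{L^2(\R^N)}^2=1$ and set $v_m:=\sqrt{m}\,w\in S_m$.

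By Lemma~\ref{lemma4}~$(ii)$, $E_m\le I(\rho(v_m)*v_m)=\max_{\rho\in\R} I(\rho*v_m)$. Computing $I(\rho*v_m)$ explicitly and performing the substitution $t=\sqrt{m}\,e^{N\rho/2}$ yields $I(\rho*v_m)=m\,h_m(t)$, where
\begin{gather*}
h_m(t):=\tfrac12\,m^{-2s/N}\,t^{4s/N}\,[w]_{\Hs}^2-t^{-2}\int_{\R^N}F(tw)\,dy.
\end{gather*}
From $(f_6)$, together with Lemma~\ref{lemma3}~$(iv)$ and a routine integration, for every $\varepsilon>0$ one finds $\delta_\varepsilon>0$ such that $F(\sigma)\ge\varepsilon^{-1}|\sigma|^{2^*_s}$ whenever $|\sigma|\le\delta_\varepsilon$. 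Hence, on the range $t\le\delta_\varepsilon/\Vert w\Vert_{L^\infty(\R^N)}$,
\begin{gather*}
h_m(t)\le\tilde h_\varepsilon(t):=\tfrac12\,m^{-2s/N}t^{4s/N}[w]_{\Hs}^2-\varepsilon^{-1}t^{2^*_s-2}\Vert w\Vert_{L^{2^*_s}(\R^N)}^{2^*_s},
\end{gather*}
and an elementary single-variable maximisation, performed at the unique critical point $t_\varepsilon^*\sim(K\varepsilon)^{1/\beta}\,m^{-(N-2s)/(4s)}$ with $\beta=8s^2/[N(N-2s)]$ and $K$ depending only on $w$, gives $m\,\tilde h_\varepsilon(t_\varepsilon^*)=C_w\,\varepsilon^{(N-2s)/(2s)}$.

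The main obstacle is that the bound $h_m\le\tilde h_\varepsilon$ is only valid near the origin, so one has to exclude that the supremum of $h_m$ escapes to large $t$. I handle this by introducing
\begin{gather*}
\Phi(t):=t^{-2-4s/N}\int_{\R^N}F(tw)\,dy,
\end{gather*}
which is continuous and strictly positive on $(0,\infty)$ by Lemma~\ref{lemma3}~$(i)$, and satisfies $\Phi(t)\to+\infty$ as $t\to+\infty$ by $(f_3)$ and Fatou's lemma. Consequently $\Phi\ge\phi_0>0$ on the half-line $[\delta_\varepsilon/\Vert w\Vert_{L^\infty(\R^N)},\infty)$, and the rewriting $h_m(t)=t^{4s/N}\bigl(\tfrac12 m^{-2s/N}[w]_{\Hs}^2-\Phi(t)\bigr)$ forces $h_m<0$ on that half-line once $m>\bigl([w]_{\Hs}^2/(2\phi_0)\bigr)^{N/(2s)}$. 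For such $m$, and for $m$ large enough that $t_\varepsilon^*\le\delta_\varepsilon/\Vert w\Vert_{L^\infty(\R^N)}$ (automatic for $m$ large with $\varepsilon$ fixed, since $t_\varepsilon^*\to 0$), the maximum of $h_m$ lies in $[0,\delta_\varepsilon/\Vert w\Vert_{L^\infty(\R^N)}]$, so
\begin{gather*}
E_m\le m\max_{t>0}h_m(t)\le m\,\tilde h_\varepsilon(t_\varepsilon^*)=C_w\,\varepsilon^{(N-2s)/(2s)}.
\end{gather*}
Letting $\varepsilon\to 0^+$ concludes the proof.
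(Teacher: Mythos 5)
Your argument is correct, and it reaches the conclusion by a genuinely different route from the one in the paper. Both proofs test $E_m$ against the rescaled family $\sqrt{m}\,w$ with $w\in L^\infty\cap S_1$ and invoke Lemma~\ref{lemma4}~$(ii)$ to reduce to bounding $\max_{\rho\in\R} I(\rho*(\sqrt{m}w))$, and both ultimately lean on $(f_6)$ through the same integration $F(t)\geq \frac{M}{2^*_s}|t|^{2^*_s}$ near the origin. The divergence comes after that. The paper works with the \emph{implicit} optimal parameter $\rho(m)$ and squeezes it through the Pohozaev constraint $P(\rho(m)*(\sqrt{m}w))=0$: first the function $g$ from Remark~\ref{oss2} (hence $(f_4)$) forces $\sqrt{m}\,e^{N\rho(m)/2}\to 0$, and then a second pass through the Pohozaev identity, with $(f_6)$ and Lemma~\ref{lemma3}~$(iv)$ providing $\tilde{F}(t)\geq \frac{4s}{N}F(t)\geq \varepsilon^{-1}|t|^{2^*_s}$, delivers the finer decay $\sqrt{m}\,e^{\rho(m)s}\to 0$ that controls the kinetic term. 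You instead bound the fiber maximum \emph{directly}: after the substitution $t=\sqrt{m}\,e^{N\rho/2}$ you reduce to a one--variable function $h_m$, dominate it near $t=0$ by the explicit two--power function $\tilde{h}_\varepsilon$ (whose maximum is computed in closed form), and kill the far range $t\geq \delta_\varepsilon/\|w\|_{L^\infty}$ by $(f_3)$ plus Fatou. This avoids the bootstrap through $\rho(m)$ entirely and has the bonus of yielding the explicit quantitative bound $E_m\leq C_w\,\varepsilon^{(N-2s)/(2s)}$ for $m\geq m_\varepsilon$; the price is that you must separately verify that the supremum of $h_m$ does not escape to large $t$, which you do correctly via the rewriting $h_m(t)=t^{4s/N}\bigl(\tfrac12 m^{-2s/N}[w]_{\Hs}^2-\Phi(t)\bigr)$. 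One small remark: your citation of Lemma~\ref{lemma3}~$(iv)$ in deriving $F(\sigma)\geq\varepsilon^{-1}|\sigma|^{2^*_s}$ is superfluous, since you work with $F$ (not $\tilde F$) and the integration of $(f_6)$ alone suffices; the paper routes through $\tilde F$ only because the Pohozaev functional $P$ is what it is controlling.
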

\begin{proof}
  We fix $u \in L^{\infty}(\R^N) \cap S_1$ and we set
  $u_m=\sqrt{m}u \in S_m$. By Lemma \ref{lemma4} $(ii)$ we can find a
  unique $\rho(m) \in \R$ such that $\rho(m) * u_m \in \Pm$. Since by
  Lemma \ref{lemma3} $(i)$ $F$ is non negative, we get
\begin{equation} \label{eq28}
0<E_m \leq I(\rho(m)*u_m)\leq \frac{1}{2}e^{2 \rho(m)s}\left[ u \right]_{\Hs}^2. 
\end{equation}
Thus, by \eqref{eq28} it suffices to show that
\begin{gather} \label{eq29}
\lim_{m \to \infty} \sqrt{m}\, e^{\rho(m)s}=0.
\end{gather}
Using the function $g$ defined in Remark \ref{oss2}, and
recalling that $P(\rho(m)*u_m)=0$ we get
\begin{gather*}
  \left[ u
  \right]_{\Hs}^2=\frac{N}{2s}m^{\frac{2s}{N}}\int_{\R^N}g\left(
    \sqrt{m}e^{\frac{N \rho(m)}{2}}u\right)|u|^{2+\frac{4s}{N}}\, dx,
\end{gather*}
which implies
\begin{equation} \label{eq30}
\lim_{m \to \infty} \sqrt{m}\, e^{\frac{N\rho(m)}{2}}=0.
\end{equation}
Now, using $(f_6)$ for any $\varepsilon>0$ we can find $\delta>0$ such
that
\begin{gather*}
  \tilde{F}(t)\geq \frac{4s}{N}F(t)\geq
  \frac{1}{\varepsilon}|t|^{\frac{2N}{N-2s}}
\end{gather*}
if $|t| \leq \delta$. Hence, taking into account the fact that
$P(\rho(m)*u_m)=0$ and \eqref{eq30}, we get
\begin{align*}
  \left[ u \right]_{\Hs}^2 & =\frac{N}{2s} \frac{1}{m}e^{-(N+2s)\rho(m)}\int_{\R^N}\tilde{F}\left(\sqrt{m}e^{\frac{N \rho(m)}{2}}u \right) \, dx\\
                           & \geq \frac{N}{2s} \frac{1}{\varepsilon}\left(\sqrt{m}e^{\rho(m)s} \right)^{\frac{4s}{N-2s}} \int_{\R^N}\tilde{F} \left(\sqrt{m}e^{\frac{N \rho(m)}{2}}u \right) \, dx
\end{align*}
for $m$ large enough. Then \eqref{eq29} holds, and the proof is complete.
\end{proof}

\section{Ground states} \label{section4}

We introduce the  functional 
\begin{gather*}
  \Psi(u)=I(\rho(u)*u)=\frac{1}{2}e^{2 \rho(u) s}\left[ u
  \right]_{\Hs}^2-e^{-N \rho(u)} \int_{\R^N}F\left( e^{\frac{N
        \rho(u)}{2}}u \right) \, dx.
\end{gather*}
\begin{lemma} \label{lemma13}
 The functional $\Psi\colon \Hs \setminus \lbrace 0 \rbrace \to \R$ is of
  class~$C^1$, and
\begin{gather*}
  d\Psi(u)\left[\varphi \right] = dI(\rho(u)*u)\left[\rho(u) * \varphi
  \right]
\end{gather*}
for every $u \in \Hs \setminus \lbrace 0 \rbrace$ and $ \varphi \in \Hs$.
\end{lemma}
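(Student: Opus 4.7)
The cleanest strategy is to avoid differentiating the (only known to be continuous) map $u \mapsto \rho(u)$ and instead exploit the fact that $\rho(u)$ is the unique \emph{maximizer} of $\rho \mapsto I(\rho * u)$, as established in Lemma \ref{lemma4}$(ii)$. This is essentially the envelope-theorem trick: first-order perturbations of the optimizer contribute nothing to the value function.

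Fix $u \in \Hs \setminus \{0\}$ and $\varphi \in \Hs$. Write $\rho_t := \rho(u+t\varphi)$ and $\rho_0 := \rho(u)$; by Lemma \ref{lemma4}$(iii)$ we have $\rho_t \to \rho_0$ as $t \to 0$. Using the maximization property twice we obtain the two-sided sandwich
\begin{gather*}
I(\rho_0 * (u+t\varphi)) - I(\rho_0 * u) \;\leq\; \Psi(u+t\varphi) - \Psi(u) \;\leq\; I(\rho_t * (u+t\varphi)) - I(\rho_t * u),
\end{gather*}
since $\Psi(u+t\varphi) = I(\rho_t*(u+t\varphi)) \geq I(\rho_0*(u+t\varphi))$ and $\Psi(u) = I(\rho_0*u) \geq I(\rho_t*u)$. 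Because the scaling map is linear in $u$, i.e.\ $\rho*(u+t\varphi) = \rho*u + t\,(\rho*\varphi)$, and because $I\colon \Hs \to \R$ is of class $C^1$ under $(f_0)$--$(f_2)$ (standard under Sobolev-subcritical growth), the left-hand side divided by $t$ converges, as $t\to 0^{\pm}$, to $dI(\rho_0*u)[\rho_0*\varphi]$. For the right-hand side I would apply the mean value theorem in the first variable, writing
\begin{gather*}
I(\rho_t * (u+t\varphi)) - I(\rho_t * u) = t\, dI\bigl(\rho_t * u + \theta_t\, t\,(\rho_t*\varphi)\bigr)\bigl[\rho_t*\varphi\bigr]
\end{gather*}
for some $\theta_t \in (0,1)$; since $\rho_t \to \rho_0$ in $\R$ and the scaling action is continuous from $\R \times \Hs$ to $\Hs$, continuity of $dI$ forces this quantity divided by $t$ to the same limit $dI(\rho_0*u)[\rho_0*\varphi]$. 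The sandwich yields Gateaux differentiability of $\Psi$ at $u$ in every direction $\varphi$, with the claimed formula
\begin{gather*}
d\Psi(u)[\varphi] = dI(\rho(u)*u)[\rho(u)*\varphi].
\end{gather*}

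To upgrade this to $C^1$, I would verify that the map
\begin{gather*}
u \longmapsto \bigl(\varphi \mapsto dI(\rho(u)*u)[\rho(u)*\varphi]\bigr)
\end{gather*}
is continuous from $\Hs\setminus\{0\}$ to $(\Hs)^*$. This factors as (i) $u \mapsto \rho(u) \in \R$, continuous by Lemma \ref{lemma4}$(iii)$; (ii) $(u,\rho) \mapsto \rho*u \in \Hs$, jointly continuous; (iii) $v \mapsto dI(v) \in (\Hs)^*$, continuous by $C^1$-regularity of $I$; combined with the fact that $\varphi \mapsto \rho*\varphi$ is a linear isometry of $\Hs$ continuously depending on $\rho$, the operator-norm continuity follows. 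The only delicate point is that Gateaux differentiability plus continuity of the Gateaux derivative already implies Fr\'echet differentiability, which is the usual wrap-up for such composite functionals.

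The main obstacle is exactly the non-smoothness issue for $\rho(u)$: without the envelope-type sandwich one would have to prove that $\rho$ is itself $C^1$ (via the implicit function theorem applied to $P(\rho*u)=0$), which is feasible but heavier. Using the maximum characterization bypasses this entirely and reduces everything to the continuous dependence already available.
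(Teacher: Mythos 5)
Your envelope/sandwich argument for the Gateaux derivative is sound and is essentially the approach of \cite{MR4150876}, to which the paper defers for this lemma (the paper itself omits the proof): the two-sided bound obtained from the maximization property of $\rho(\cdot)$, together with the linearity $\rho*(u+t\varphi)=\rho*u+t(\rho*\varphi)$, the mean value theorem, and the continuity of $u\mapsto\rho(u)$ from Lemma~\ref{lemma4}$(iii)$, correctly produces $d\Psi(u)[\varphi]=dI(\rho(u)*u)[\rho(u)*\varphi]$ without differentiating $\rho$. You handle the sign of $t$ correctly.

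The $C^1$ upgrade, however, contains a genuine gap. Writing $A_\rho\varphi:=\rho*\varphi$, you assert that ``$\varphi\mapsto\rho*\varphi$ is a linear isometry of $\Hs$ continuously depending on $\rho$'' and conclude operator-norm continuity. First, $A_\rho$ is not an isometry of $\Hs$: it preserves the $L^2$ norm but scales the Gagliardo seminorm by $e^{\rho s}$. More importantly, $\rho\mapsto A_\rho$ is only \emph{strongly} continuous, not continuous in the operator norm of $\mathcal{L}(\Hs)$ --- just as for translation or dilation groups, $\|A_{\rho'}-A_\rho\|_{\mathcal{L}(\Hs)}$ does not tend to $0$ as $\rho'\to\rho$ (test against functions concentrated at high frequency). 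Consequently, for $u_n\to u$ with $\rho_n:=\rho(u_n)\to\rho:=\rho(u)$ and $v_n:=\rho_n*u_n\to v:=\rho*u$, the decomposition
\begin{gather*}
d\Psi(u_n)-d\Psi(u)=\bigl[dI(v_n)-dI(v)\bigr]\circ A_{\rho_n}+dI(v)\circ\bigl[A_{\rho_n}-A_\rho\bigr]
\end{gather*}
has a first term that vanishes in $(\Hs)^*$ since $\sup_n\|A_{\rho_n}\|_{\mathcal{L}(\Hs)}<\infty$, but the second term cannot be controlled by $\|dI(v)\|_{(\Hs)^*}\,\|A_{\rho_n}-A_\rho\|_{\mathcal{L}(\Hs)}$ as your argument implicitly requires. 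To close the gap one must use the structure of $dI$: move the scaling onto the \emph{fixed} element $v$, writing
\begin{gather*}
dI(v)[A_\rho\varphi]=e^{2\rho s}\iint_{\R^{2N}}\frac{\bigl(((-\rho)*v)(x)-((-\rho)*v)(y)\bigr)\bigl(\varphi(x)-\varphi(y)\bigr)}{|x-y|^{N+2s}}\,dx\,dy-\int_{\R^N}e^{-\frac{N\rho}{2}}f\bigl(v(e^{-\rho}\,\cdot)\bigr)\varphi\,dx,
\end{gather*}
so that $dI(v)\circ A_\rho$ is represented by $(-\rho)*v\in\Hs$ and $e^{-N\rho/2}f(v(e^{-\rho}\,\cdot))\in L^{\frac{2N}{N+2s}}(\R^N)$. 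Strong continuity of the dilation action on these fixed elements then gives $dI(v)\circ A_{\rho_n}\to dI(v)\circ A_\rho$ in $(\Hs)^*$, which completes the proof. The rest (Gateaux plus continuous Gateaux derivative implies Fr\'echet $C^1$) is standard and correctly invoked.
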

\begin{proof}
  A proof appears in \cite{MR4150876} for the case
  \(s=1\). Only minor adjustments are needed in the fractional
  case, and we omit the details.
\end{proof}
For $m>0$, we consider the constrained functional \(J\colon {S_m} \to \R\)
defined by $J=\Psi_{|S_m}$. Lemma~\ref{lemma13} yields the
following statement.
\begin{lemma} 
The functional $J\colon S_m \to \R$ is $C^1$ and
\begin{gather*}
  dJ(u)\left[\varphi\right]=d\Psi(u)\left[\varphi\right]=dI(\rho(u)*u)\left[
    \rho(u) * \varphi\right]
\end{gather*}
for any $u \in S_m$ and $\varphi \in T_u S_m$, where \(T_u S_m\) is
the tangent space at \(u\) to the manifold \(S_m\).
\end{lemma}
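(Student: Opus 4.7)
The plan is to deduce this lemma directly as a corollary of the previous Lemma~\ref{lemma13}, together with the fact that $S_m$ is a smooth (indeed $C^\infty$) submanifold of $\Hs$ of codimension one. Specifically, I would first observe that $S_m = G^{-1}(m)$, where $G\colon \Hs \to \R$ is defined by $G(u) = \Vert u \Vert_{L^2(\R^N)}^2$. The map $G$ is of class $C^\infty$ with differential $dG(u)[\varphi] = 2 \int_{\R^N} u \varphi \, dx$, which is non-zero at every $u \in S_m$ since $m>0$ forces $u \neq 0$. By the implicit function theorem in Banach spaces, $S_m$ is therefore a $C^\infty$-embedded submanifold of $\Hs$ with tangent space
\begin{gather*}
T_u S_m = \left\lbrace \varphi \in \Hs \,\Big|\, \int_{\R^N} u \varphi \, dx = 0 \right\rbrace.
\end{gather*}

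Next, I would note the crucial inclusion $S_m \subset \Hs \setminus \lbrace 0 \rbrace$, which holds because $\Vert u \Vert_{L^2(\R^N)}^2 = m > 0$ rules out $u = 0$. Consequently the functional $\Psi$, which by Lemma~\ref{lemma13} is of class $C^1$ on $\Hs \setminus \lbrace 0 \rbrace$, restricts to a well-defined $C^1$ functional $J = \Psi|_{S_m}$ on the submanifold $S_m$. The general principle that the restriction of a $C^1$ functional on an ambient Banach space to a $C^1$-submanifold is itself $C^1$, with differential given by the restriction of the ambient differential to the tangent space, gives immediately
\begin{gather*}
dJ(u)[\varphi] = d\Psi(u)[\varphi] \qquad \text{for every } \varphi \in T_u S_m.
\end{gather*}

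Finally, to obtain the last equality in the statement, I would simply invoke the formula provided by Lemma~\ref{lemma13}, namely $d\Psi(u)[\varphi] = dI(\rho(u) * u)[\rho(u) * \varphi]$, which holds for every $\varphi \in \Hs$ and in particular for $\varphi \in T_u S_m$. Combining these two identities yields the claim. There is essentially no obstacle here: the statement is a formal corollary of Lemma~\ref{lemma13}, and no new analytic input about $f$ or the fractional Laplacian is required beyond verifying the elementary submanifold structure of $S_m$, which is standard and can be dispatched in a single sentence.
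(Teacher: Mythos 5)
Your proposal is correct and is exactly the argument the paper has in mind: the paper gives no proof at all, simply stating that Lemma~\ref{lemma13} yields this statement, and your elaboration (that $S_m \subset \Hs\setminus\{0\}$ is a $C^\infty$ submanifold via the regular level set $G^{-1}(m)$ of $G(u)=\|u\|_{L^2}^2$, so the restriction of the $C^1$ functional $\Psi$ is $C^1$ with differential restricted to $T_uS_m$, after which Lemma~\ref{lemma13} gives the final formula) is the standard and intended route. No gaps.
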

We recall from \cite[Definition 3.1]{MR1251958} a definition that will
be useful to construct a min-max principle.
\begin{definition}
  Let $B$ be a closed subset of a metric space $X$. We say that a
  class $\mathcal{G}$ of compact subsets of $X$ is a homotopy stable
  family with closed boundary $B$ provided
\begin{description}
\item[$(i)$] every set in $\mathcal{G}$ contains $B$,
\item[$(ii)$] for any set $A$ in $\mathcal{G}$ and any homotopy
  $\eta \in C\left(\left[0,1 \right]\times X, X \right)$ that
  satisfies $\eta(t,u)=u$ for all
  $(t,u) \in \left( \lbrace 0 \rbrace \times X\right) \cup \left(
    \left[0,1 \right] \times B \right)$, one has
  $\eta \left( \lbrace 1 \rbrace \times A \right) \in \mathcal{G}$.
\end{description}
\end{definition}
We remark that $B=\emptyset$ is admissible.
\begin{lemma} \label{lemma15} Let $\mathcal{G}$ be a homotopy stable
  family of compact subset with (with $B=\emptyset$). We set
\begin{gather*}
E_{m,\mathcal{G}}=\inf_{A \in \mathcal{G}} \max_{u \in A} J(u).
\end{gather*}
If $E_{m,\mathcal{G}}>0$, then there exists a Palais-Smale sequence
$(u_n)_n \in \Pm$ for the constrained functional $I_{|S_m}$ at level
$E_{m,\mathcal{G}}$. In particular, if $\mathcal{G}$ is the class of
all singletons in $S_m$, one has that
$\Vert u_n^- \Vert_{L^2(\R^N)}\to 0$ as $n \to +\infty$.
\end{lemma}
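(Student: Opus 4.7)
The plan is to apply Ghoussoub's min-max principle \cite{MR1251958} to the $C^1$-functional $J$ on the Hilbert manifold $S_m$ and then transfer the resulting abstract Palais--Smale sequence onto $\mathcal{P}_m$ via the fibering map $\pi(u) := \rho(u) * u$. First I would enlarge the construction by replacing $\mathcal{G}$ with $\tilde{\mathcal{G}} := \{\pi(A) \mid A \in \mathcal{G}\}$. The continuous homotopy $h(t,u) := (t\rho(u)) * u$, continuous by Lemma~\ref{lemma4}(iii), connects the identity to $\pi$ on $S_m$ and shows that $\tilde{\mathcal{G}}$ remains homotopy-stable. Because $\rho(\pi(u)) = 0$ by the uniqueness in Lemma~\ref{lemma4}(i), we have $J \circ \pi = J$, so the min-max value associated with $\tilde{\mathcal{G}}$ again equals $E_{m,\mathcal{G}}$, while every element of $\tilde{\mathcal{G}}$ is contained in $\mathcal{P}_m$.

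Next I would extract a near-minimizing family $\tilde A_n \in \tilde{\mathcal{G}}$ with $\max_{\tilde A_n} J \le E_{m,\mathcal{G}} + 1/n$ and invoke Ghoussoub's theorem to obtain $(v_n) \subset S_m$ such that $J(v_n) \to E_{m,\mathcal{G}}$, $\|dJ(v_n)\|_{(T_{v_n} S_m)^*} \to 0$, and $\mathrm{dist}_{H^s}(v_n, \tilde A_n) \to 0$. Setting $u_n := \pi(v_n) \in \mathcal{P}_m$, the continuity of $\rho$ from Lemma~\ref{lemma4}(iii) together with $\rho \equiv 0$ on $\mathcal{P}_m$ forces $\rho(v_n) \to 0$; in particular $(\rho(v_n))_n$ is bounded and $u_n - v_n \to 0$ in $H^s$. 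Since $I(u_n) = \Psi(v_n) = J(v_n)$, the correct level condition $I(u_n) \to E_{m,\mathcal{G}}$ is automatic.

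The main technical obstacle is to convert the smallness of $dJ(v_n)$ on $T_{v_n} S_m$ into smallness of $dI|_{S_m}(u_n)$ on $T_{u_n} S_m$. For each $\tilde\varphi \in T_{u_n} S_m$ I would set $\varphi := (-\rho(v_n)) * \tilde\varphi$; the scaling is an $L^2$-isometry preserving orthogonality to $v_n$, hence $\varphi \in T_{v_n} S_m$, and $\|\varphi\|^2 = \|\tilde\varphi\|_{L^2}^2 + e^{-2\rho(v_n)s}[\tilde\varphi]_{H^s}^2$ is controlled by $\|\tilde\varphi\|^2$ uniformly in $n$ thanks to the boundedness of $\rho(v_n)$. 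Lemma~\ref{lemma13} then gives $dJ(v_n)[\varphi] = dI(u_n)[\tilde\varphi]$, so $\|dI|_{S_m}(u_n)\|_{(T_{u_n} S_m)^*} \to 0$. For the singleton case I would replace each minimizing singleton $\{w_n\}$ by $\{|w_n|\}$: the oddness of $f$ yields $F(|u|) = F(u)$ and, combined with $[|u|]_{H^s} \le [u]_{H^s}$, the inequality $I(\rho * |u|) \le I(\rho * u)$ for every $\rho$, hence $J(|u|) \le J(u)$. Thus $\tilde A_n = \{\pi(|w_n|)\}$ consists of non-negative functions, so $\mathrm{dist}_{H^s}(v_n, \tilde A_n) \to 0$ implies $\|v_n^-\|_{L^2} \to 0$, and the Lipschitz estimate $\|u_n^- - v_n^-\|_{L^2} \le \|u_n - v_n\|_{L^2} \to 0$ yields $\|u_n^-\|_{L^2} \to 0$.
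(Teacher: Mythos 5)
Your overall strategy mirrors the paper's: push a minimizing family into $\mathcal{P}_m$ via the scaling map $\pi(u)=\rho(u)*u$, apply Ghoussoub's Theorem 3.2 to get $(v_n)$ with $J(v_n)\to E_{m,\mathcal{G}}$ and $\dist_{H^s}(v_n,\tilde A_n)\to 0$, set $u_n:=\pi(v_n)$, and transfer the Palais--Smale property from $J$ at $v_n$ to $I|_{S_m}$ at $u_n$ by scaling test functions. However, there is a genuine gap in the middle step.

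You assert that $\dist_{H^s}(v_n,\tilde A_n)\to 0$ together with the continuity of $\rho$ from Lemma~\ref{lemma4}(iii) and the fact that $\rho\equiv 0$ on $\mathcal{P}_m$ ``forces $\rho(v_n)\to 0$.'' This does not follow. Lemma~\ref{lemma4}(iii) establishes only pointwise (sequential) continuity of $\rho$ at each fixed $u\in H^s\setminus\{0\}$; it is not a uniform continuity statement. What you have is a sequence $w_n\in\tilde A_n$ with $\|v_n-w_n\|_{H^s}\to 0$ and $\rho(w_n)=0$, but since $(w_n)$ is a moving, not necessarily convergent, sequence, continuity at a point cannot be invoked to conclude $\rho(v_n)-\rho(w_n)\to 0$. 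The paper proves instead the weaker, but sufficient, quantitative bound $e^{-2\rho(v_n)s}=\left[v_n\right]_{\Hs}^2/\left[u_n\right]_{\Hs}^2\le C$, obtained by combining Lemma~\ref{lemma5}$(ii)$ (since $u_n\in\Pm$, $\left[u_n\right]_{\Hs}$ is bounded away from $0$) with Lemma~\ref{lemma5}$(iv)$ and $\dist(v_n,D_n)\to 0$ (so $\left[v_n\right]_{\Hs}$ is bounded above). That lower bound on $\rho(v_n)$ is exactly what you need to control $\|(-\rho(v_n))*\tilde\varphi\|_{\Hs}$ uniformly, so your final estimate on $\|dI|_{S_m}(u_n)\|$ can be salvaged.

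Your treatment of the singleton case has the same issue: you derive $\|u_n^-\|_{L^2}\to 0$ from $\|u_n-v_n\|_{L^2}\to 0$, which in turn requires $\rho(v_n)\to 0$. This detour is unnecessary. Since the dilation $\rho*$ multiplies by a positive constant and rescales the argument, $u_n^-=\rho(v_n)*v_n^-$, and the scaling is an $L^2$-isometry, so $\|u_n^-\|_{L^2(\R^N)}=\|v_n^-\|_{L^2(\R^N)}\to 0$ directly, without any control on $u_n-v_n$. Your auxiliary observation that replacing a singleton $\{w\}$ by $\{|w|\}$ decreases $J$ (via $[|u|]_{\Hs}\le[u]_{\Hs}$ and the evenness of $F$) is correct and matches the paper.
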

\begin{proof}
Let $(A_n)_n \subset \mathcal{G}$ be a minimizing sequence of $E_{m,\mathcal{G}}$. We define the map
\begin{gather*}
\eta \colon \left[ 0,1 \right] \times S_m \to S_m
\end{gather*}
where $\eta(t,u)=(t\rho(u))*u$ is continuous and well defined by lemma \ref{lemma4} $(ii)$ and $(iii)$. Noticing $\eta(t,u)=u$ for every $(t,u) \in \lbrace 0 \rbrace \times S_m$ we obtain that
\begin{gather*}
D_n:=\eta(1,A_n)=\lbrace \rho(u)*u \mid u \in A_n\rbrace \in \mathcal{G}.
\end{gather*}
In particular we can see that $D_n \subset \Pm$ for any $m>0$, with $m>0$. Since $J(\rho(u)*u)=J(u)$ for every $\rho \in \R$ and $u \in S_m$, we can observe that
\begin{gather*}
\max_{u\in D_n} J(u)=\max_{u \in A_n}J(u) \to E_{m,\mathcal{G}}
\end{gather*}
thus, $(D_n)_n$ is another minimizing sequence for $E_{m,\mathcal{G}}$. Now, using \cite[Theorem 3.2]{MR1251958} we get a Palais-Smale sequence $(v_n)_n \subset S_m$ for $J$ at level $E_{m,\mathcal{G}}$ such that $\dist_{\Hs}(v_n,D_n) \to 0$ as $n \to +\infty$. We will denote
\begin{gather*}
\rho_n:=\rho(v_n) \quad \hbox{and} \quad u_n:=\rho_n*v_n.
\end{gather*}

\textit{Claim:} There exists $C>0$ such that $e^{-2\rho_n s} \leq C$ for any $n \in \N$.

We start pointing out that
\begin{gather*}
e^{-2\rho_n s}=\frac{\left[v_n\right]_{\Hs}^2}{\left[u_n\right]_{\Hs}^2}.
\end{gather*}
By virtue of the fact that $(u_n)_n \subset \Pm$, using lemma \ref{lemma5} $(ii)$ we obtain that $\left\lbrace\left[ u_n\right]_{\Hs} \right\rbrace_n$ is bounded from below. Moreover, since $D_n \subset \Pm$ and the fact that
\begin{gather*}
\max_{u \in D_n} I=\max_{u \in D_n} J \to E_{m,\mathcal{G}},
\end{gather*}
 Lemma \ref{lemma5} $(iv)$ implies that $D_n$ is uniformly bounded in $\Hs$. Finally, from $\dist(v_n,D_n) \to 0$ we can deduce that $\sup_{n\in\N} \left[ v_n \right]_{\Hs}< \infty$. Thus the claim holds.

Now, from $(u_n) \subset \Pm$  we get
\begin{gather*}
I(u_n)=J(u_n)=J(v_n) \to E_{m,\mathcal{G}}.
\end{gather*} 
Instead, for any $\psi \in T_{u_n}S_m$ we have
\begin{align*}
\int_{\R^N}v_n\left[ (-\rho_n)*\psi\right] \, dx &=\int_{\R^N}v_n e^{-\frac{N \rho_n}{2}}\psi\left(e^{-\rho_n}x \right) \, dx = \int_{\R^N} e^{\frac{N \rho_n}{2}}v_n\left(e^{\rho_n}x \right)\psi \, dx \\
&=\int_{\R^N} (\rho_n * v_n) \psi \, dx = \int_{\R^N} u_n \psi \, dx=0
\end{align*}
implying $(-\rho_n*\psi)\in T_{v_n}S_m$. Besides, by the claim
\begin{gather*}
\Vert(-\rho_n)*v_n \Vert_{\Hs} \leq \max \lbrace C,1\rbrace \Vert \psi \Vert_{\Hs}.
\end{gather*}
Denoting with $\Vert \cdot \Vert_{u,*}$ the dual norm of the space $(T_u S_m)^*$ and using Lemma \ref{eq13} we get
\begin{align*}
\Vert dI(u_n) \Vert_{u_n,*}&= \sup_{\substack{\psi \in T_{u_n} S_m \\ \left\Vert \psi \right\Vert_{\Hs} \leq 1}} |dI(u_n)\left[ \psi \right]| =\sup_{\substack{\psi \in T_{u_n}S_m \\ \Vert \psi \Vert_{\Hs}\leq 1}}  |dI(\rho_n*v_n)\left[\rho_n*((-\rho_n)*\psi) \right]| \\
&= \sup_{\substack{\psi \in T_{u_n}S_m \\ \Vert \psi \Vert_{\Hs} \leq
  1}} |dJ(v_n)\left[ (-\rho_n)*\psi \right]| \\
  &\leq \Vert dJ(v_n)\Vert_{v_n,*} \sup_{\substack{\psi \in T_{u_n}S_m \\ \Vert \psi \Vert_{\Hs} \leq 1}} \Vert(-\rho_n)*\psi \Vert_{\Hs}\\
& \leq  \max \{C, 1 \} \ \Vert dJ(v_n) \Vert_{v_n,*} \to 0
\end{align*} 
as $n \to +\infty$ remembering that $(v_n)_n$ is a Palais-Smale
sequence for the functional $J$. We have just proved $(u_n)_n$ is a
Palais-Smale sequence for the functional $I_{|S_m}$ at level
$E_{m, \mathcal{G}}$ with the additional property that
$(u_n)_n \subset \Pm$. Finally, noticing that the family of singleton
of $S_m$ is a particular homotopy stable family of compact subsets of
$S_m$, and doing this particular choice as $\mathcal{G}$, arguing
similarly as we have just done, we can obtain a minimizing sequence
$(D_n)_n$ with the additional property that its elements are non
negative: we only need to replace the functions with their absolute value. 
Moreover, $(A_n)_n$ will inherit this property, and recalling that $\dist(v_n, D_n) \to 0$ as
$n \to +\infty$ we have
\begin{gather*}
\Vert u_n^- \Vert_{L^2(\R^N)}=\Vert \rho_n*v_n^-\Vert_{L^2(\R^N)}=\Vert v_n^- \Vert_{L^2(\R^N)} \to 0.
\end{gather*} 
This concludes the proof of the lemma.
\end{proof}
\begin{lemma} \label{lemma16}
We assume $(f_0)-(f_4)$ hold. Then there exists a Palais-Smale sequence $(u_n)_n \subset \Pm$ for the constrained functional $I_{|S_m}$ at level $E_m$ such that $\Vert u_n^- \Vert_{L^2(\R^N)}\to 0$ as $n \to +\infty$.
\end{lemma}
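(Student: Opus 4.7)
The plan is to apply Lemma~\ref{lemma15} with $\mathcal{G}$ chosen as the class of all singletons of $S_m$. The whole job then reduces to verifying the identification $E_{m,\mathcal{G}} = E_m$ and the positivity hypothesis $E_{m,\mathcal{G}} > 0$; the additional property $\Vert u_n^-\Vert_{L^2(\R^N)} \to 0$ is already built into the second assertion of Lemma~\ref{lemma15}.

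First I would check that the family $\mathcal{G}$ of singletons of $S_m$ is indeed a homotopy stable family with $B = \emptyset$: for any $u \in S_m$ and any continuous homotopy $\eta$ with $\eta(0,\cdot) = \mathrm{id}$, the image $\eta(1,\{u\}) = \{\eta(1,u)\}$ is again a singleton in $S_m$, so the two defining properties of Definition~1 hold trivially.

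Next I would prove $E_{m,\mathcal{G}} = E_m$. Unwinding the definitions gives
\begin{gather*}
E_{m,\mathcal{G}} \;=\; \inf_{A \in \mathcal{G}}\max_{u\in A} J(u) \;=\; \inf_{u \in S_m} I\bigl(\rho(u)*u\bigr).
\end{gather*}
For each $u \in S_m$, Lemma~\ref{lemma4}~$(i)$ ensures that $\rho(u)*u$ belongs to $\mathcal{P}_m$ (scaling preserves the $L^2$-norm and annihilates $P$), hence $E_{m,\mathcal{G}} \ge E_m$. Conversely, if $u \in \mathcal{P}_m$, uniqueness in Lemma~\ref{lemma4}~$(i)$ forces $\rho(u) = 0$, so $J(u) = I(u)$; taking the infimum over $\mathcal{P}_m$ yields $E_{m,\mathcal{G}} \le E_m$.

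Finally, Lemma~\ref{lemma5}~$(iii)$ gives $E_m > 0$, so the hypothesis of Lemma~\ref{lemma15} is in force. Invoking that lemma with this choice of $\mathcal{G}$ produces a Palais--Smale sequence $(u_n)_n \subset \mathcal{P}_m$ for $I_{|S_m}$ at level $E_m$ with $\Vert u_n^-\Vert_{L^2(\R^N)} \to 0$, which is exactly the claim. There is no real obstacle here beyond the identification $E_{m,\mathcal{G}} = E_m$; the entire analytic content has been packaged into Lemmas~\ref{lemma4},~\ref{lemma5}, and~\ref{lemma15}.
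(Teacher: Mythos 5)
Your proof is correct and follows essentially the same route as the paper: invoke Lemma~\ref{lemma15} with $\mathcal{G}$ the class of singletons in $S_m$, establish $E_{m,\mathcal{G}} = E_m$ via Lemma~\ref{lemma4}~$(i)$ (both the existence of $\rho(u)*u \in \mathcal{P}_m$ and the uniqueness forcing $\rho(u)=0$ for $u \in \mathcal{P}_m$), and use Lemma~\ref{lemma5} for positivity. The only addition you make is an explicit verification that singletons form a homotopy stable family, which the paper leaves implicit.
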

\begin{proof}
We apply lemma $\ref{lemma15}$ with $\mathcal{G}$ the class of all singletons in $S_m$. Lemma $\ref{lemma5}$ imply that $E_m >0$, thus the only thing that remains to prove is $E_m=E_{m,\mathcal{G}}$. In order to do that, as a first step we notice that
\begin{gather*}
E_{m,\mathcal{G}}=\inf_{A \in \mathcal{G}} \max_{u \in A} J(u)=\inf_{u \in S_m} I(\rho(u)*u).
\end{gather*}
Since for every $u \in S_m$ we have that $ \rho(u) * u \in \Pm$ it must be $I(\rho(u) *u) \geq E_m$, thus $E_{m, \mathcal{G}}\geq E_m$. On the other hand, if $u \in \Pm$ we have $\rho(u)=0$ and $I(u)\geq E_{m,\mathcal{G}}$, that implies $E_m\geq E_{m,\mathcal{G}}$.
\end{proof}
\begin{lemma} \label{lemma17}
Let $(u_n)_n \subset S_m$ be a bounded Palais-Smale sequence for the constrained functional $I_{|S_m}$ at level $E_m>0$ such that $P(u_n) \to 0$ as $n \to +\infty$. Then we have the existence of $u \in S_m$ and $\mu >0$ such that, up to a subsequence and translations in $\R^N$, $u_n \to u$ strongly in $\Hs$ and
\begin{gather*}
(-\Delta)^s u+\mu u=f(u).
\end{gather*}
\end{lemma}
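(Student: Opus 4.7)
The plan is to extract a nontrivial weak limit by concentration–compactness, identify it as a solution of the limiting equation with a positive Lagrange multiplier $\mu$, and then rule out loss of mass by exploiting the strict monotonicity of $m \mapsto E_m$ established in Lemma~\ref{lemma10}.

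First, the Palais–Smale property on $S_m$ produces Lagrange multipliers $\mu_n \in \R$ with $(-\Delta)^s u_n + \mu_n u_n - f(u_n) \to 0$ in $H^{-s}(\R^N)$; testing against $u_n$, and using $(f_1),(f_2)$ together with the boundedness of $(u_n)_n$, shows that $(\mu_n)_n$ is bounded, so up to a subsequence $\mu_n \to \mu$. I would next apply a Lions-type dichotomy to
\begin{gather*}
\alpha = \limsup_{n \to +\infty} \sup_{y \in \R^N} \int_{B(y,1)} |u_n|^2 \, dx.
\end{gather*}
If $\alpha = 0$, Lions' vanishing lemma gives $u_n \to 0$ in $L^{2+4s/N}(\R^N)$, Lemma~\ref{lemma1}(ii) forces $\int_{\R^N}\tilde F(u_n)\, dx \to 0$, and combined with the hypothesis $P(u_n) \to 0$ this yields $[u_n]_{\Hs} \to 0$ and hence $I(u_n) \to 0$, contradicting $E_m > 0$. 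Therefore $\alpha > 0$, and after translating by suitable $y_n \in \R^N$ (still writing $u_n$ for the translated sequence) one has $u_n \weakto u \neq 0$ in $\Hs$; passing to the limit in the Euler–Lagrange equation yields $(-\Delta)^s u + \mu u = f(u)$, and $u$ satisfies the fractional Pohozaev identity $P(u) = 0$. To get $\mu > 0$, combining this identity with the equation tested against $u$ produces
\begin{gather*}
s\mu \Vert u \Vert_{\ldue}^2 = \tfrac{N-2s}{2} \int_{\R^N}\!\left[\tfrac{2N}{N-2s} F(u) - f(u)u\right] dx,
\end{gather*}
which is strictly positive thanks to $(f_5)$ and $u \not\equiv 0$.

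For strong convergence, set $v_n := u_n - u$. By Lemma~\ref{lemma6} and the standard Brezis–Lieb splittings for $[\cdot]_{\Hs}$ and $\Vert \cdot \Vert_{\ldue}$ one has $I(u_n) = I(u) + I(v_n) + o(1)$ and $P(v_n) \to 0$; subtracting the equation for $u$ from the Palais–Smale identity for $u_n$ gives
\begin{gather*}
[v_n]_{\Hs}^2 + \mu \Vert v_n \Vert_{\ldue}^2 = \int_{\R^N} f(v_n) v_n \, dx + o(1).
\end{gather*}
If $\Vert v_n \Vert_{L^{2+4s/N}} \to 0$, Lemma~\ref{lemma1}(iii) kills the right-hand side and, since $\mu > 0$, this forces $v_n \to 0$ in $\Hs$ and in particular $u \in S_m$. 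If instead this $L^{2+4s/N}$-vanishing fails, a further translation extracts a second nontrivial weak limit $w \neq 0$ solving $(-\Delta)^s w + \mu w = f(w)$, so $w \in \mathcal{P}_{m''}$ with $m'' := \Vert w \Vert_{\ldue}^2 > 0$; iterating the splittings and using Lemma~\ref{lemma5}(iii) yields
\begin{gather*}
E_m \geq E_{m'} + E_{m''}, \qquad m', m'' > 0, \qquad m' + m'' \leq m,
\end{gather*}
where $m' := \Vert u \Vert_{\ldue}^2$, in direct contradiction with $E_{m'} > E_m$ (Lemma~\ref{lemma10}, since $\mu > 0$) combined with $E_{m''} > 0$.

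The hard part will be this last scenario: one must control the residual $r_n := v_n - w(\cdot - y_n')$ carefully so that $\liminf_n I(r_n) \geq 0$ (using $P(r_n) \to 0$, the splitting of Lemma~\ref{lemma6}, and the a~priori bound of Lemma~\ref{lemma5}(iii)), which is what makes the inequality $E_m \geq E_{m'} + E_{m''}$ actually available. Once this is secured, the strict monotonicity of $m \mapsto E_m$ closes the argument and gives $u_n \to u$ strongly in $\Hs$, with $\mu > 0$ as already established.
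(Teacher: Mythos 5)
Your overall plan coincides with the paper's: Lions dichotomy to extract a nontrivial weak limit, passage to the limit in the Euler--Lagrange equation and the fractional Pohozaev identity, the $(f_5)$-computation giving $\mu>0$, Brezis--Lieb splittings, and then ruling out loss of mass via the strict monotonicity of $m\mapsto E_m$ (Lemma~\ref{lemma10}, usable because $\mu>0$). The $\mu>0$ computation you wrote is the same as \eqref{eq40}, and the vanishing branch of your dichotomy is sound (modulo a small rewrite: the identity one actually gets from subtracting the equations is $[v_n]^2+\mu\|v_n\|_{L^2}^2=\int (f(u_n)-f(u))v_n\,dx+o(1)$, and both $\int f(u_n)v_n\to 0$ by Lemma~\ref{lemma1}$(iii)$ and $\int f(u)v_n\to 0$ because $f(u)\in L^{2N/(N+2s)}$ and $v_n\weakto 0$; no Brezis--Lieb for $f(u)u$ is needed, which is good because Lemma~\ref{lemma6} only covers~$F$).

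The gap is exactly where you flagged it, and the specific tool you invoke there would not close it. Lemma~\ref{lemma5}$(iii)$ asserts $\inf_{\mathcal{P}_m}I>0$; the residual $r_n=v_n-w(\cdot-y_n')$ does not lie in $\mathcal{P}_m$ (it is not even on $S_m$), so that lemma gives you nothing about $\liminf I(r_n)$. Nor does $P(r_n)\to 0$ alone force $\liminf I(r_n)\geq 0$: $I(r_n)=\tfrac12[r_n]^2-\int F(r_n)$ and $P(r_n)=[r_n]^2-\tfrac{N}{2s}\int\widetilde{F}(r_n)$ are different linear combinations, and without further structure a negative limit is not ruled out. What the paper actually does is prove $\lim I(v_n^1)\geq 0$ \emph{unconditionally} by contradiction: assuming $\lim I(v_n^1)<0$ forces non-vanishing (because in the vanishing case $\int F(v_n^1)\to 0$ by Lemma~\ref{lemma1}$(ii)$ and then $\lim I(v_n^1)=\tfrac12\lim[v_n^1]^2\geq 0$), and iterating the extraction produces infinitely many nontrivial profiles $\omega_k\in B_m$ with $P(\omega_k)=0$; Remark~\ref{oss1} (not Lemma~\ref{lemma5}$(iii)$) then gives the uniform lower bound $[\omega_k]_{\Hs}\geq\delta$, contradicting $\sum_k[\omega_k]_{\Hs}^2\leq\sup_n[u_n]_{\Hs}^2<\infty$. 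With $\lim I(v_n^1)\geq 0$ in hand, one gets $E_m=I(\omega_1)+\lim I(v_n^1)\geq I(\omega_1)\geq E_h\geq E_m$ (non-increasingness, Lemma~\ref{lemma8}), hence $E_m=E_h$ and $\lim I(v_n^1)=0$; strict monotonicity from $\mu>0$ forces $h=m$, and then $\|v_n^1\|_{L^2}\to 0$ plus interpolation and Lemma~\ref{lemma1}$(ii)$ upgrade $\lim I(v_n^1)=0$ to $[v_n^1]_{\Hs}\to 0$. Replace the appeal to Lemma~\ref{lemma5}$(iii)$ by Remark~\ref{oss1} and make the iteration/counting explicit, and your argument will match.
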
 
\begin{proof}
  It is clear that~$(u_n)_n \subset S_m$ is bounded in $\Hs$ and is a
  Palais-Smale sequence. Together, these two facts enable us to assume
  without loss of generality that
  $\lim_{n \to +\infty} \left[ u_n \right]_{\Hs}$,
  $\lim_{n \to +\infty} \int_{\R^N} F(u_n) \, dx$, and
  $\lim_{n \to +\infty} \int_{\R^N} f(u_n)u_n \, dx$ exist. Besides,
  \cite[Lemma 3]{MR695536} implies
\begin{gather*}
(-\Delta)^su_n + \mu_n u_n-f(u_n) \to 0 \quad \hbox{in \(\Hs^*\)} 
\end{gather*} 
where we denoted
\begin{gather*}
  \mu_n=\frac{1}{m}\left(\int_{\R^N} f(u_n) u_n \, dx -\left[ u_n
    \right]_{\Hs}^2 \right).
\end{gather*}
By the assumptions done above we can see that $\mu_n \to \mu$ for some
$\mu \in \R$ and we also have that for any $(y_n)_n \subset \R^N$
\begin{equation} \label{eq33}
(-\Delta)^s u_n(\cdot + y_n)+\mu u_n(\cdot + y_n) -f(u_n(\cdot+y_n))
\to 0 \quad \hbox{in \(\Hs^*\)} .
\end{equation}

\textit{Claim:} $(u_n)_n$ is non vanishing.

Otherwise by \cite[Lemma II.4]{MR3059423} we would get $u_n \to 0$ in $L^{2+\frac{4s}{N}}(\R^N)$.
Taking into account that $P(u_n) \to 0$ and using lemma \ref{lemma1} $(ii)$ we get
\begin{gather*}
\left[ u_n \right]_{\Hs}^2=P(u_n) + \frac{N}{2s} \int_{\R^n} \tilde{F}(u_n) \, dx \to 0
\end{gather*}
and as a consequence of that,
\begin{gather*}
  E_m=\lim_{n \to +\infty} I(u_n)=\frac{1}{2} \lim_{n \to
    +\infty}\left[ u_n \right]_{\Hs}^2-\lim_{n \to +\infty}\int_{\R^N}
  F(u_n) \, dx
\end{gather*}
contradicting $E_m>0$. Then the claim must hold.

Since $(u_n)_n$ in non vanishing we can find $(y_n^1)_n \subset \R^N$
and $\omega_1 \in B_m \setminus \lbrace 0 \rbrace$ such that
$u_n(\cdot + y_n^1) \rightharpoonup \omega_1$ in $\Hs$,
$u_n(\cdot + y_n^1) \to \omega_1$ in
$L^p_{\mathrm{\mathrm{loc}}}(\R^N)$ for $p \in \left[1, 2^*_s\right]$
and $ u_n(\cdot + y_n^1) \to \omega $ a.e. in $\R^N$. Now, we want to
apply \cite[Lemma A.1]{MR695535} with $P(t)=f(t)$ and
$Q(t)=|t|^{(N+2s)/(N-2s)}$ and we notice that
\begin{multline} \label{eq34}
  \lim_{n \to +\infty} \int_{\R^N} \left| \left[ f(u_n(\cdot + y_n^1)-f(\omega_1) \right]\varphi \right| \, dx\\
  \leq \Vert \varphi \Vert_{L^{\infty}(\R^N)} \lim_{n \to +\infty}
  \int_{\supp (\varphi )} \left| f(u_n(\cdot + y_n^1)-f(\omega_1)
  \right|\, dx
\end{multline}
for any $\varphi \in C_c^{\infty}(\R^N)$. Hence, by \eqref{eq33} and
\eqref{eq34} we get
\begin{equation}
  \label{eq35} (-\Delta)^s\omega_1 + \mu \omega_1=
  f(\omega_1)
\end{equation}
and through the Pohozaev Identity (see for instance \cite[Proposition
4.1]{MR3007900}) associated to \eqref{eq35} we also have
$P(\omega_1)=0$. Now, we set $v_n^1:=u_n-\omega_1(\cdot-y_n^1)$ for
every $n \in \N$. Clearly
$v_n^1(\cdot+y_n^1)=u_n(\cdot+y_n^1)-\omega_1 \rightharpoonup 0$ in
$\Hs$, thus
\begin{align} \label{eq36}
  m&=\lim_{n \to +\infty} \Vert u_n(\cdot +
  y_n^1 ) \Vert_{L^2(\R^N)} =\lim_{n \to +\infty} \Vert
  v_n^1\Vert_{L^2(\R^N)}^2 + \Vert \omega_1 \Vert_{L^2(\R^N)}^2.
\end{align}
By lemma \ref{lemma6} we also have
\begin{gather*}
  \lim_{n \to +\infty} \int_{\R^N} F(u_n(\cdot + y_n^1)) \, dx
  =\int_{\R^N} F(\omega_1) \, dx + \lim_{n \to +\infty} \int_{\R^N}
  F(v_n^1(\cdot + y_n^1)) \, dx
\end{gather*}
hence
\begin{align} \label{eq37}
  E_m=\lim_{n \to +\infty} I(u_n)&= \lim_{n \to +\infty} I(u_n(\cdot + y_n^1)) = \lim_{n \to +\infty} I(v_n^1(\cdot + y_n^1))+I(\omega_1) \\ \notag
                                 & = \lim_{n \to +\infty} I(v_n^1) + I(\omega_1).
\end{align}

\textit{Claim:} $\lim_{n \to +\infty} I(v_n^1) \geq 0$.

If the claim does not hold, i.e $ \lim_{n \to +\infty} I(v_n^1)<0$, $(v_n^1)_n$ is non vanishing, then there exists $(y_n^2)_n \subset \R^N$ such that
\begin{gather*}
\lim_{n \to +\infty} \int_{B(y_n^2,1)} |v_n^1|^2 >0.
\end{gather*}
Since $v_n^1(\cdot + y_n^1) \to 0$ in $L^2_{\mathrm{loc}}(\R^N)$, it must be $|y_n^2-y_n^1| \to \infty$, and up to a subsequence  $v_n^1(\cdot + y_n^2) \to \omega_2$ in $\Hs$ for some $\omega_2 \in B_m \setminus \lbrace 0 \rbrace$. We notice 
\begin{gather*}
u_n(\cdot + y_n^2)=v_n^1(\cdot + y_n^2)+\omega_1(\cdot -y_n^1+y_n^2)\rightharpoonup \omega_2
\end{gather*}
thus, arguing as before, we get $P(\omega_2)=0$ and $I(\omega_2)>0$. We set
\begin{gather*} 
v_n^2= v_n^1-\omega^2 (\cdot - y_n^2)= u_n- \sum_{\ell=1}^{2}\omega_{\ell} (\cdot-y_n^{\ell})
\end{gather*}
and we observe that
\begin{align*}
\lim_{n \to +\infty}  \left[ v_n^2\right]_{\Hs}^2 &= \lim_{n \to +\infty} \left[ v_n^1 \right]_{\Hs}^2 +  \left[ \omega_2 \right]_{\Hs}^2-2 \lim_{n \to +\infty} \langle v_n^1, \omega_2(\cdot - y_n^2) \rangle_{\Hs}\\
&= \lim_{n \to +\infty} \left[ v_n^1 \right]_{\Hs}^2 +  \left[ \omega_2 \right]_{\Hs}^2 -2 \lim_{n \to +\infty} \langle v_n^1(\cdot + y_n^2), \omega_2 \rangle_{\Hs}  \\
&= \lim_{n \to +\infty}  \left[ u_n\right]_{\Hs}^2 + \left[
                                                                                                                                                                           \omega_1\right]_{\Hs}^2 - \left[ \omega_2\right]_{\Hs}^2 \\
  &\quad {}-2 \lim_{n \to +\infty} \langle u_n(\cdot + y_n^1), \omega_1\rangle_{\Hs} \\
&= \lim_{n \to +\infty}  \left[ u_n\right]_{\Hs}^2 - \sum_{\ell=1}^{2} \left[ \omega_{\ell} \right]_{\Hs}^2
\end{align*}
and 
\begin{gather*}
  0> \lim_{n \to +\infty} I(v_n^1)=I(\omega_2) + \lim_{n \to +\infty}
  I(v_n^2)> \lim_{n \to +\infty} I(v_n^2).
\end{gather*}  
Iterating, we can build an infinite sequence $(\omega_k) \subset B_m \setminus \lbrace 0 \rbrace$ such that $P(\omega_k)=0$ and
\begin{gather*}
\sum_{\ell=1}^{k} \left[ \omega_k \right]_{\Hs}^2 \leq \left[ u_n \right]_{\Hs}^2 < \infty
\end{gather*}
for every $k \in \N$. Though, this is a contradiction. Indeed,
recalling remark \ref{oss1}, for any
$\omega \in B_m \setminus \lbrace 0 \rbrace$ such that $P(\omega)=0$,
we can find $\delta >0$ such that
$\left[ \omega \right]_{\Hs}^2 \geq \delta$. Hence, the claim must
hold and $\lim_{n \to +\infty} I(v_n^1) \geq 0$.

Now, we denote with
$h:=\Vert \omega_1 \Vert_{L^2(\R^N)}^2 \in \left( 0, m\right]$. By
virtue of the claim, \eqref{eq37} and the fact that
$\omega_1 \in \mathcal{P}_h$, we get
\begin{gather*}
E_m =I(\omega_1) +\lim_{n \to +\infty} I(v_n^1) \geq I(\omega^1) \geq E_h
\end{gather*}
but, recalling that $E_m$ in non-increasing by lemma \ref{lemma8}, we obtain
\begin{equation} \label{eq38}
I(\omega_1) =E_m=E_h
\end{equation} 
and
\begin{equation} \label{eq39}
\lim_{n \to +\infty} I(v_n^1)=0.
\end{equation}
To prove that $\mu \geq 0$ it suffices to put together \eqref{eq35},
\eqref{eq38} and Lemma \ref{lemma10}. Instead, to see that
$\mu$ is strictly positive, using
$(f_5)$, lemma \ref{lemma2} and the Pohozaev Identity corresponding to
\eqref{eq35}, we get
\begin{equation} \label{eq40}
  \mu=\frac{1}{ms}\int_{\R^N} \left(
    N F(\omega_1) -\frac{N-2s}{2}f(\omega_1) \omega_1
  \right)\, dx >0.
\end{equation}
At this point, we suppose by contradiction that $h<m$, but taking
into account \eqref{eq35}, \eqref{eq40} and Lemma \eqref{lemma10} we
would have
\begin{gather*}
I(\omega_1)=E_h>E_m
\end{gather*} 
which is not compatible with \eqref{eq39}. Thus $h=m$. Moreover, by
\eqref{eq36} \(v_n^1 \to 0\) in \(L^2(\R^N)\).  It remains only to
prove the strong convergence of $(v_n^1)_n$ in $\Hs$. To do that, it
is sufficient to notice that by lemma \ref{lemma1} $(ii)$ we
have~$\lim_{n \to +\infty} \int_{\R^N} F(v_n^1) \, dx$, and so we
obtain the assertion thanks to \eqref{eq39}.
\end{proof}

\begin{proof}[Proof of theorem \ref{th1}]
  Applying lemma \ref{lemma16} we obtain a Palais-Smale sequence
  $(u_n)_n \subset \Pm $ at level $E_m>0$ for the constrained
  functional $I_{|S_m}$. This sequence is bounded in $\Hs$ by Lemma
  \ref{lemma5} and through Lemma \ref{lemma17} we get a critical point
  $u \in S_m$ at the level $E_m>0$ that results to be a ground state
  energy. Finally, since $\Vert u_n^- \Vert_{L^2(\R^N)} \to 0$ we
  deduce that $u \geq 0$ and after applying the strong maximum
  principle we obtain $u >0$.
\end{proof}
\begin{proof}[Proof of theorem \ref{th2}]
  The proof is a direct consequence of Theorem \ref{th1} and Lemmas
  \ref{lemma5}, \ref{lemma7}, \ref{lemma8}, \ref{lemma11},
  \ref{lemma12}.
\end{proof}

\section{Existence of radial solutions} \label{section5}

This section is devoted to prove the existence of infinitely many
radial solutions to problem \eqref{eq:Pm}. Before doing this, we recall
some basic definitions and we provide some notation.

\medskip

Denote by $\sigma \colon \Hs \to \Hs$ the transformation
$\sigma(u)=-u$ and let $X \subset \Hs$. A set $A \subset X$ is called
$\sigma$-invariant if $\sigma(A)=A$. A homotopy
$\eta \colon \left[0,1\right]\times X \to X$ is $\sigma$-equivariant
if $\eta(t,\sigma(u))=\sigma(\eta(t,u))$ for all
$(t,u) \in \left[0,1\right] \times X$. Next definition is in
\cite[Definition 7.1]{MR1251958}.
\begin{definition}
  Let $B$ be a closed $\sigma$-invariant subset of $X\subset \Hs$. We say
  that a class $\mathcal{G}$ of compact subsets of $X$ is a
  $\sigma$-homotopy stable family with closed boundary $B$ provided
  \begin{description}
  \item[$(i)$] every set in $\mathcal{G}$ is $\sigma$-invariant.
  \item[$(ii)$] every set in $\mathcal{G}$ contains $B$,
  \item[$(iii)$] for any set $A$ in $\mathcal{G}$ and any
    $\sigma$-equivariant homotopy
    $\eta \in C\left(\left[0,1 \right]\times X, X \right)$ that
    satisfies $\eta(t,u)=u$ for all
    $(t,u) \in \left( \lbrace 0 \rbrace \times X\right) \cup \left(
      \left[0,1 \right] \times B \right)$, one has
    $\eta \left( \lbrace 1 \rbrace \times A \right) \in \mathcal{G}$.
  \end{description}
\end{definition}
We denote with $\Hr$ the space of radially symmetric functions in
$\Hs$ and recall that $\Hr \hookrightarrow L^p(\R)$ compactly for all
$p \in (2,2^*_s)$ (see~\cite[Proposition I.1]{MR683027}).  \bigskip

In order to prove the main result of this section, we need to build a
sequence of $\sigma$-homotopy stable families of compact subsets of
$S_m\cap \Hr$. We point out that in the definition above, the case in
which $B=\emptyset$ is not excluded. The idea is borrowed from
\cite{MR4150876}. Let $(V_k)_k$ be a sequence of finite
dimensional linear subspaces of $\Hr$ such that $V_k \subset V_{k+1}$,
$\dim V_k=k$ and $\bigcup_{k\geq 1} V_k$ is dense in $\Hr$. Denote by
$\pi_k$ the orthogonal projection from $\Hr$ onto $V_k$. We recall to
the reader the definition of the genus of $\sigma$-invariant sets
introduced by M. A. Krasnoselskii and we refer to \cite[Section
7]{MR845785} or \cite[chapter 10]{MR2292344} for its basic properties.
\begin{definition}
  Let $A$ be a nonempty compact $\sigma$-invariant subset of $\Hr$. The
  genus $\gamma(A)$ of $A$ is the least integer $k$ such that there
  exists $\phi \in C(\Hr,\R^k)$ such that $\phi$ is odd and
  $\phi(x) \neq 0$ for all $x \in A$. We set $\gamma(A)=\infty$ if
  there are no integers with the above property and
  $\gamma(\emptyset)=0$.
\end{definition}
Let $\mathcal{A}$ be the family of closed $\sigma$-invariant subset of
$S_m\cap \Hr$. For each $k \in \N$, set
\begin{gather*}
  \mathcal{G}_k:=\lbrace A \in \mathcal{A} \mid \gamma(A)\geq k \rbrace
\end{gather*}
and
\begin{gather*}
  E_{m,k}=\inf_{A \in \mathcal{A}} \max_{u \in A} J(u).
\end{gather*}
Next, we give a result about the weak convergence of the nonlinearity
$f$.
\begin{lemma} \label{lemma23}
  Assume $(f_0)-(f_2)$ hold true. Let
  $(u_n)_n \subset H^s_r(\R^N)$. If $u_n \rightharpoonup u$ in
  $H^s_r(\R^N)$ for some $u \in H^s_r(\R^N)$, then
  $f(u_n) \rightharpoonup f(u)$ in $L^{\frac{2N}{N+2s}}(\R^N)$.
\end{lemma}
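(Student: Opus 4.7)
The plan is to combine the growth bounds on $f$ coming from $(f_1)$ and $(f_2)$ with the compactness of the embedding $H^s_r(\R^N)\hookrightarrow L^p(\R^N)$ for $p\in(2,2^*_s)$, and then use the classical fact that pointwise convergence plus boundedness in a reflexive $L^p$ implies weak convergence.

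First, I would convert $(f_1)$ and $(f_2)$ into the global pointwise bound
\begin{gather*}
|f(t)| \leq C\bigl(|t|^{1+\frac{4s}{N}} + |t|^{\frac{N+2s}{N-2s}}\bigr)\qquad\text{for all }t\in\R,
\end{gather*}
for some $C>0$. Raising this to the power $\frac{2N}{N+2s}$ (the conjugate exponent of $2^*_s$) yields
\begin{gather*}
|f(t)|^{\frac{2N}{N+2s}} \leq C'\bigl(|t|^{2+\frac{4s}{N+2s}} + |t|^{2^*_s}\bigr).
\end{gather*}
The exponent $2+\frac{4s}{N+2s}$ lies strictly between $2$ and $2^*_s$, so by interpolation (between $L^2$ and $L^{2^*_s}$) and the Sobolev embedding $H^s(\R^N)\hookrightarrow L^{2^*_s}(\R^N)$, the right-hand side is bounded in $L^1(\R^N)$ along any bounded sequence in $H^s(\R^N)$. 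In particular, $(f(u_n))_n$ is bounded in $L^{\frac{2N}{N+2s}}(\R^N)$.

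Next, since $(u_n)_n$ is weakly convergent in $H^s_r(\R^N)$, it is bounded there, and by the compact embedding $H^s_r(\R^N)\hookrightarrow L^p(\R^N)$ for $p\in(2,2^*_s)$ (e.g.\ for $p=2+\tfrac{4s}{N}$), I may extract a subsequence $(u_{n_k})_k$ with $u_{n_k}\to u$ strongly in $L^p(\R^N)$ and hence almost everywhere in $\R^N$. By the continuity of $f$ (assumption $(f_0)$), $f(u_{n_k})\to f(u)$ almost everywhere.

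Finally, since $L^{\frac{2N}{N+2s}}(\R^N)$ is reflexive and $(f(u_{n_k}))_k$ is bounded there with $f(u_{n_k})\to f(u)$ a.e., a standard result (see e.g.\ Lions' lemma or the Brezis-Lieb framework) gives $f(u_{n_k})\rightharpoonup f(u)$ in $L^{\frac{2N}{N+2s}}(\R^N)$. A routine subsequence-of-subsequence argument then upgrades this to weak convergence of the full sequence $(f(u_n))_n$, since the weak limit $f(u)$ is independent of the chosen subsequence. The only mildly delicate step is verifying the intermediate exponent $2+\frac{4s}{N+2s}$ actually falls into the compact-embedding range, but once this is checked the rest is essentially bookkeeping.
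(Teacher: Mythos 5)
Your proof is correct and follows essentially the same route as the paper: the same growth bound $|f(t)|\le C(|t|^{1+4s/N}+|t|^{(N+2s)/(N-2s)})$, boundedness of $(f(u_n))_n$ in $L^{2N/(N+2s)}(\R^N)$ via Sobolev and interpolation, a.e.\ convergence through the compact radial embedding, and identification of the weak limit by the standard ``bounded in reflexive $L^p$ plus a.e.\ convergence implies weak convergence'' principle, which the paper proves inline with a Severini--Egorov covering argument rather than citing. The only minor caveat is that ``Lions' lemma'' and ``Brezis--Lieb'' are not quite the right names for that last principle, but the fact itself is correct and standard, and the rest of the bookkeeping (including the exponent check $2+\frac{4s}{N+2s}\in(2,2^*_s)$ and the subsequence-of-subsequences upgrade) is sound.
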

\begin{proof}
  We borrow some ideas from~\cite[Theorem 2.6]{MR2120260}.  We start
  exploiting the compact embeding
  $H^s_r(\R^N) \hookrightarrow L^{p}(\R^N)$ for any~$p \in
  (2,2^*_s)$. Hence, up to a subsequence, $u_n \to u$ in $L^p(\R^N)$
  and a.e. in $\R^N$. From equation \eqref{eq3}, we get
\begin{gather*}
  |f(u_n)|^{\frac{2N}{N+2s}} \leq C_{\varepsilon}
  |u_n|^{\frac{2N}{N-2s}}+C|u_n|^{2\frac{N+4s}{N+2s}}
\end{gather*}
for some $C_{\varepsilon}, \, C >0$. As a consequence of that,
recalling the fractional Sobolev inequality and observing that
$2\frac{N+4s}{N+2s} \in (2,2^*_s)$, we obtain that
$\left( f(u_n)\right)_n$ is bounded in
$L^{\frac{2N}{N+2s}}(\R^N)$. Thus, there exists
$y \in L^{\frac{2N}{N+2s}}(\R^N)$ such that
$f(u_n) \rightharpoonup y$. At this point, we fix a cover
$\left( \Omega_j \right)_j$ of $\R^N$ made of subsets with finite
measure. For any $\upsilon > 0 $, Severini-Egorov's Theorem yields the
existence of $B_{\upsilon}^j \subset \Omega_j$, with
measure~$\left| B_{\upsilon}^j \right| < \upsilon$, such that $u_n \to u$
uniformly in $\Omega_j \setminus B_{\upsilon}^j$. Clearly $y=f(u)$ in
$\Omega_j \setminus B_{\upsilon}^j$. Now, we set
\begin{gather*}
  \mathcal{Q}:=\left\lbrace x \in \R^N \mid y \neq f(u) \right\rbrace \quad
  \hbox{and} \quad Q_j:= \left\lbrace x \in \Omega_j \mid y \neq
  f(u) \right\rbrace.
\end{gather*}
Since $\upsilon $ is arbitrary and $Q_j \subset B_{\upsilon}^j$, we
have that \(Q_j\) is a set of measure zero. Furthermore, it is easy to see that
$\mathcal{Q}=\bigcup_{j=1}^{\infty}Q_j$, thus \(Q\) has measure zero and the proof is complete.
\end{proof}
From now on, we will always assume $(f_0)-(f_5)$ hold until the end of
the section. 
\begin{lemma} \label{lemma18} Let $\mathcal{G}$ be a $\sigma$-homotopy
  stable family of compact subset of $S_m\cap \Hr$ (with
  $B=\emptyset$) and set
\begin{gather*}
  E_{m,\mathcal{G}}:= \inf_{A \in \mathcal{G}} \max_{u \in A} J(u).
\end{gather*} 
If $E_{m,\mathcal{G}}>0$ then there exists a Palais-Smale sequence
$(u_n)_n$ in $\p_m \cap \Hr$ for $I_{| S_m \cap \Hr}$ at level
$E_{m,\mathcal{G}}$.
\end{lemma}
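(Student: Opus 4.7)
The plan is to mimic the proof of Lemma \ref{lemma15} but in the equivariant radial setting. First, I would take a minimizing sequence $(A_n)_n \subset \mathcal{G}$ for $E_{m,\mathcal{G}}$ and define the map
\begin{gather*}
  \eta \colon [0,1] \times (S_m \cap \Hr) \to S_m \cap \Hr, \qquad
  \eta(t,u) = (t\rho(u)) * u.
\end{gather*}
The crucial observation is that $\eta$ is $\sigma$-equivariant: by Lemma \ref{lemma4}(iv) we have $\rho(-u)=\rho(u)$, so $\eta(t,-u) = (t\rho(u))*(-u) = -\eta(t,u)$. The scaling $\rho * u$ preserves radial symmetry since $(\rho * u)(x) = e^{N\rho/2} u(e^\rho x)$ depends on $x$ only through $|x|$ when $u$ does, and continuity follows from Lemma \ref{lemma4}(iii). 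Since $\eta(0,u)=u$, the $\sigma$-homotopy stability of $\mathcal{G}$ gives
\begin{gather*}
  D_n := \eta(1, A_n) = \{ \rho(u)*u \mid u \in A_n \} \in \mathcal{G}.
\end{gather*}

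By construction $D_n \subset \p_m \cap \Hr$, and since $J(\rho*u)=J(u)$ for every $\rho\in\R$, we obtain $\max_{D_n} J = \max_{A_n} J \to E_{m,\mathcal{G}}$, so $(D_n)_n$ is itself a minimizing sequence. Applying the $\sigma$-equivariant minimax principle of Ghoussoub (\cite[Theorem 7.2]{MR1251958}) to $(D_n)_n$ furnishes a Palais-Smale sequence $(v_n)_n \subset S_m \cap \Hr$ for the constrained functional $J$ at level $E_{m,\mathcal{G}}$ satisfying $\dist_{\Hs}(v_n, D_n) \to 0$ as $n \to +\infty$.

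Setting $\rho_n := \rho(v_n)$ and $u_n := \rho_n * v_n$, I would then repeat the final argument of Lemma \ref{lemma15} verbatim: the identity
\begin{gather*}
  e^{-2\rho_n s} = \frac{[v_n]_{\Hs}^2}{[u_n]_{\Hs}^2}
\end{gather*}
together with the lower bound from Lemma \ref{lemma5}(ii) on $[u_n]_{\Hs}$, the uniform bound of $D_n$ in $\Hs$ from Lemma \ref{lemma5}(iv), and $\dist(v_n,D_n)\to 0$ yield $e^{-2\rho_n s}\leq C$. The change of variables $\psi \mapsto (-\rho_n)*\psi$ identifies tangent vectors at $u_n$ with tangent vectors at $v_n$ on $S_m \cap \Hr$ (radial symmetry is preserved), and the dual-norm estimate
\begin{gather*}
  \| dI(u_n) \|_{u_n,*} \leq \max\{C,1\} \, \| dJ(v_n) \|_{v_n,*} \to 0
\end{gather*}
shows that $(u_n)_n \subset \p_m \cap \Hr$ is the sought Palais-Smale sequence for $I_{\vert S_m \cap \Hr}$ at level $E_{m,\mathcal{G}}$.

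The main obstacle is the correct invocation of the $\sigma$-equivariant deformation theorem: one must ensure that the pseudogradient flow can be chosen to be $\sigma$-equivariant and to preserve the radial constraint. Since the functional $J$, the constraint manifold $S_m \cap \Hr$, and the map $\eta$ are all compatible with the $\Z_2$-action $\sigma(u)=-u$, the construction of \cite[Theorem 7.2]{MR1251958} applies with only cosmetic modifications, so no genuinely new analytic difficulty arises beyond what was already handled in Lemma \ref{lemma15}.
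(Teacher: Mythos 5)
Your proposal is correct and follows exactly the same route as the paper, which proves this lemma in one line by directing the reader to repeat the argument of Lemma~\ref{lemma15} with Ghoussoub's equivariant minimax principle (Theorem~7.2 of~\cite{MR1251958}) in place of Theorem~3.2. You have usefully spelled out the two checks the paper leaves implicit---that $\eta(t,u)=(t\rho(u))*u$ is $\sigma$-equivariant via Lemma~\ref{lemma4}$(iv)$ and that the scaling preserves radial symmetry---and the remainder of your argument transfers the Palais--Smale estimate verbatim, as intended.
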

\begin{proof}
  It suffices to replace Theorem 3.2 with 7.2 of \cite{MR1251958} in
  the proof of Lemma \ref{lemma15}.
\end{proof}
\begin{lemma} \label{lemma19}
  For any $k \in \N$ we have,
\begin{description}
\item[$(i)$] $\mathcal{G}_k\neq \emptyset$ and $\mathcal{G}_k$ is a
  $\sigma$-homotopy stable family of compact subsets of $S_m\cap \Hr$
  (with $B=\emptyset$),
\item[$(ii)$] $E_{m,k+1}\geq E_{m,k}>0$.
\end{description}
\end{lemma}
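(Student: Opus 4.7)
The plan is to verify separately the two clauses of the lemma, relying on standard properties of the Krasnoselskii genus together with the fiber-map analysis already carried out in Section~\ref{section2}. Specifically, (i) reduces to checking non-emptiness and the three axioms of the definition, while (ii) follows from the nesting of the classes $\mathcal{G}_k$ together with the pointwise estimate $J(u) \geq E_m$.

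For non-emptiness, I would exploit the $k$-dimensional subspaces $V_k \subset \Hr$ introduced just before the lemma. The map
\begin{gather*}
\Phi_k\colon V_k \setminus \{0\} \to S_m \cap V_k, \qquad \Phi_k(u) = \sqrt{m}\, \frac{u}{\|u\|_{L^2}}
\end{gather*}
is continuous and odd, and it sends the unit $L^2$-sphere of $V_k$ onto a compact, $\sigma$-invariant subset $A_k$ of $S_m \cap \Hr$. Since this sphere is homeomorphic to $S^{k-1}$, the standard Borsuk--Ulam argument gives $\gamma(A_k) = k$, so $A_k \in \mathcal{G}_k$. For the three axioms of a $\sigma$-homotopy stable family: (a) every $A \in \mathcal{G}_k$ is $\sigma$-invariant by definition of $\mathcal{A}$; (b) the condition $B \subset A$ is vacuous since $B = \emptyset$; (c) given $A \in \mathcal{G}_k$ and a $\sigma$-equivariant homotopy $\eta$ with $\eta(0,u) = u$, the set $\eta(\{1\} \times A)$ is compact (continuous image of a compact set), $\sigma$-invariant (since $\sigma \eta(1,u) = \eta(1,\sigma u)$), and $\eta(1,\cdot)\colon A \to \eta(\{1\}\times A)$ is a continuous odd map, so by the monotonicity property of the genus under continuous odd maps we obtain $\gamma(\eta(\{1\} \times A)) \geq \gamma(A) \geq k$. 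Therefore $\eta(\{1\} \times A) \in \mathcal{G}_k$.

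For (ii), the inclusion $\mathcal{G}_{k+1} \subset \mathcal{G}_k$ is immediate from the definition, and taking infima over a smaller class can only increase the value, which gives $E_{m,k+1} \geq E_{m,k}$. To prove the strict positivity $E_{m,k} > 0$, I would use that for every $u \in S_m$ one has $\rho(u)*u \in \mathcal{P}_m$ by Lemma~\ref{lemma4}(i), hence
\begin{gather*}
J(u) = \Psi(u) = I(\rho(u)*u) \geq \inf_{v \in \mathcal{P}_m} I(v) = E_m > 0
\end{gather*}
by Lemma~\ref{lemma5}(iii). Taking the maximum over any $A \in \mathcal{G}_k$ and then the infimum over $\mathcal{G}_k$ yields $E_{m,k} \geq E_m > 0$, completing the argument. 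No real obstacle is expected here: the only point requiring care is the correct application of the genus monotonicity under the odd map $\eta(1,\cdot)$, which is a routine consequence of the definition of $\gamma$.
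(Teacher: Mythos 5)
Your proof is correct and follows essentially the same route as the paper: the paper also takes $S_m\cap V_k$ (which is exactly your $A_k$) with $\gamma(S_m\cap V_k)=k$ via \cite[Theorem 10.5]{MR2292344}, observes that the axioms of a $\sigma$-homotopy stable family hold directly from the definition of $\mathcal A$, and derives $E_{m,k}\ge E_m>0$ from $\rho(u)*u\in\mathcal P_m$ together with Lemma~\ref{lemma5}(iii). You simply make explicit the genus-monotonicity check for axiom (c), which the paper leaves as a one-line remark.
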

\begin{proof}
  $(i)$ It suffices to notice that for any $k\in \N$ one has
  $S_m\cap V_k \in \mathcal{A}$ and that by
  \cite[Theorem 10.5]{MR2292344}
\begin{gather*}
\gamma(S_m\cap V_k)=k.
\end{gather*}
Thus $\mathcal{G}_k \neq \emptyset$. The conclusion is a direct
consequence of the definition of $\mathcal{A}$.

$(ii)$ By the previous step $E_{m,k}$ is well defined. Furthermore,
recalling that $\rho(u) * u \in \p_m$ for all $u \in A$, where $A$ is
chosen arbitrarily in $\mathcal{G}$, we have
\begin{gather*}
  \max_{u \in A} J(u)=\max I(\rho(u)*u)=\inf_{v \in \p_m} I(v),
\end{gather*}
hence $E_{m,k}>0$. The other part of the statement follows easily from
$\mathcal{G}_{k+1} \subset \mathcal{G}_{k}$.
\end{proof}
\begin{lemma} \label{lemma20}
  Let $(u_n)_n \subset S_m \cap \Hr$ be a
  bounded Palais-smale sequence for $I_{|S_m}$ at an arbitrary level
  $c>0$ satisfying $P(u_n) \to 0$. Then there exists
  $u \in S_m \cap \Hr$ and $\mu>0$ such that, up to a subsequence,
  $u_n \to u$ strongly in $\Hr$ and
\begin{gather*}
  (-\Delta)^s+\mu u=f(u).
\end{gather*}
\end{lemma}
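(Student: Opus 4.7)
The plan is to adapt the proof of Lemma \ref{lemma17} to the radial setting, exploiting the crucial simplification that the compact embedding $\Hr \hookrightarrow L^p(\R^N)$ for $p \in (2, 2^*_s)$ (see \cite{MR683027}) obviates the need for any concentration--compactness splitting. Since $(u_n)_n$ is bounded in $\Hr$, up to a subsequence $u_n \weakto u$ in $\Hr$, $u_n \to u$ almost everywhere, and $u_n \to u$ strongly in $L^p(\R^N)$ for every $p \in (2, 2^*_s)$. Setting
\begin{gather*}
\mu_n = \frac{1}{m}\left(\int_{\R^N} f(u_n) u_n \, dx - [u_n]_{\Hs}^2\right),
\end{gather*}
exactly as in the proof of Lemma \ref{lemma17} (via \cite[Lemma 3]{MR695536}), one obtains a bounded sequence of Lagrange multipliers with $\mu_n \to \mu \in \R$ along a subsequence, and $(-\Delta)^s u_n + \mu_n u_n - f(u_n) \to 0$ in $(\Hs)^*$. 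Passing to the limit with the aid of Lemma \ref{lemma23} then yields $(-\Delta)^s u + \mu u = f(u)$.

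Next I would rule out the trivial limit and establish the positivity of $\mu$. If $u \equiv 0$, then $u_n \to 0$ in $L^{2+4s/N}(\R^N)$ since $2+\frac{4s}{N} \in (2, 2^*_s)$; Lemma \ref{lemma1}\,(ii) then gives $\int_{\R^N} F(u_n) \, dx \to 0$ and $\int_{\R^N} \tilde F(u_n) \, dx \to 0$, which combined with $P(u_n) \to 0$ forces $[u_n]_{\Hs}^2 \to 0$, hence $I(u_n) \to 0$, contradicting $c > 0$. With $u \neq 0$ in hand, the Pohozaev identity associated to the limit equation (see \cite[Proposition 4.1]{MR3007900}) combined with the equation tested against $u$ yields
\begin{gather*}
\mu = \frac{1}{s \, \Vert u \Vert_{L^2(\R^N)}^2} \int_{\R^N} \left(N F(u) - \frac{N-2s}{2} f(u) u\right) dx,
\end{gather*}
which is strictly positive by $(f_5)$ and Lemma \ref{lemma3}\,(i), exactly as in the derivation of \eqref{eq40}.

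The last and most delicate step is to upgrade weak to strong convergence in $\Hr$. I would first prove that $\int_{\R^N} \tilde F(u_n) \, dx \to \int_{\R^N} \tilde F(u) \, dx$. By Lemma \ref{lemma6} applied with $g_n = u_n - u$, together with Lemma \ref{lemma1}\,(ii) applied to $(u_n - u)_n$ (which tends to zero in $L^{2+4s/N}$), one obtains $\int F(u_n) \to \int F(u)$. Writing $\int f(u_n) u_n \, dx - \int f(u) u \, dx = \int f(u_n)(u_n-u)\,dx + \int (f(u_n) - f(u)) u \, dx$, the first term vanishes by Lemma \ref{lemma1}\,(iii) and the second by Lemma \ref{lemma23} together with the Sobolev embedding $u \in L^{2^*_s}(\R^N)$. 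Since the limit equation gives $P(u)=0$, the hypothesis $P(u_n) \to 0$ and the convergence of $\int \tilde F(u_n)$ now force $[u_n]_{\Hs} \to [u]_{\Hs}$, which combined with $u_n \weakto u$ in $\Hr$ promotes the weak convergence of the Gagliardo seminorm to a strong one. Finally, testing the Lagrange multiplier identity against $u_n$ and letting $n \to \infty$ gives $[u]_{\Hs}^2 + \mu m = \int f(u) u \, dx$; comparing with the equation tested against $u$ forces $\Vert u \Vert_{L^2(\R^N)}^2 = m$ because $\mu > 0$. Hence $u \in S_m$ and $u_n \to u$ strongly in $L^2(\R^N)$, which together with the convergence of the Gagliardo seminorms yields $u_n \to u$ strongly in $\Hr$.

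The hardest part is precisely this recovery of the $L^2$ norm: the radial compact embedding fails at the endpoint $p = 2$, so the identification $\Vert u \Vert_{L^2(\R^N)}^2 = m$ cannot follow from compactness alone and must be extracted from the algebraic structure of the limit equation together with the strict positivity of $\mu$ guaranteed by $(f_5)$.
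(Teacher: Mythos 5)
Your proof is correct and follows the same overall strategy as the paper's: pass to a weak limit via the radial compact embedding, extract the Lagrange multiplier and identify the limit equation with the help of Lemma~\ref{lemma23}, rule out a vanishing limit using $P(u_n)\to 0$ together with Lemma~\ref{lemma1}\,(ii), then use the Pohozaev identity and $(f_5)$ to obtain $\mu>0$, and finally deduce strong convergence. The only genuine divergence is in the last compactness step. The paper combines, in one stroke, the identity
\begin{gather*}
\left[u\right]_{\Hs}^2+\mu\Vert u\Vert_{L^2(\R^N)}^2=\lim_{n\to+\infty}\left[u_n\right]_{\Hs}^2+\mu m
\end{gather*}
with weak lower semicontinuity of both the Gagliardo seminorm and the $L^2$-norm; since $\mu>0$, the two inequalities $\left[u\right]_{\Hs}^2\leq\liminf\left[u_n\right]_{\Hs}^2$ and $\Vert u\Vert_{L^2(\R^N)}^2\leq m$ must both be equalities. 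You instead prove $\left[u_n\right]_{\Hs}^2\to\left[u\right]_{\Hs}^2$ separately, using $P(u_n)\to 0$, $P(u)=0$, and the convergence $\int_{\R^N}\tilde{F}(u_n)\,dx\to\int_{\R^N}\tilde{F}(u)\,dx$, and only afterwards recover $\Vert u\Vert_{L^2(\R^N)}^2=m$. Your route is sound but slightly longer: proving convergence of $\int_{\R^N}\tilde{F}(u_n)\,dx$ requires $\int_{\R^N}F(u_n)\,dx\to\int_{\R^N}F(u)\,dx$, which brings in the Brezis--Lieb splitting of Lemma~\ref{lemma6}, a tool the paper's proof of this lemma does not need since there only $\int_{\R^N}f(u_n)u_n\,dx\to\int_{\R^N}f(u)u\,dx$ is used. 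One small wording nit: ``testing the Lagrange multiplier identity against $u_n$'' is tautological, because the definition of $\mu_n$ makes $\left[u_n\right]_{\Hs}^2+\mu_n m-\int_{\R^N}f(u_n)u_n\,dx$ vanish identically; what you actually use is this exact identity together with the limits already established, which is perfectly fine.
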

\begin{proof}
  By the boundedness of the Palais-Smale sequence we may assume
  $u_n \rightharpoonup u$ in $\Hr$, $u_n \to u$ in $L^p(\R^N)$ for any
  $p \in (2,2^*_s)$ and a.e. in $\R^N$. Besides, as already seen in
  the previous section, using \cite[Lemma 3]{MR695536} we get
\begin{equation} \label{eq41}
(-\Delta)^su_n+ \mu_n u_n - f(u_n) \to 0 \quad \hbox{in} \, (\Hr)^*
\end{equation}  
where
\begin{gather*}
  \mu_n:=\frac{1}{m} \left( \int_{\R^N}f(u_n)u_n \, dx-\left[
      u_n\right]_{\Hs}^2 \right).
\end{gather*}
Again, similarly to the proof of Lemma \ref{lemma17}, we can assume
the existence of $\mu \in \R$ such that $\mu_n \to \mu$, from which we
derive
\begin{equation} \label{eq42}
(-\Delta)^s+\mu u=f(u).
\end{equation}
\textit{Claim:}
$u \neq 0$.

If $u=0$, then by the compact embedding $u_n \to 0$ in
$L^{2+\frac{4s}{N}}(\R^N)$. Hence, using Lemma \ref{lemma1} $(ii)$ and
the fact that $P(u_n) \to 0$, we have $\int_{\R^N}F(u_n) \, dx \to 0$
and
\begin{gather*}
  \left[u_n \right]^2_{\Hs}=P(u_n)+ \frac{N}{2s}\int_{\R^N}
  \tilde{F}(u_n) \, dx \to 0,
\end{gather*}
from which
\begin{gather*}
  c=\lim_{n \to +\infty} I(u_n)=\frac{1}{2} \lim_{n \to +\infty}
  \left[u_n \right]_{\Hs}^2-\lim_{n \to +\infty} F(u_n) \, dx = 0,
\end{gather*}
that contradicts the hypothesis of $c>0$. Now, since $u \neq 0$, as we
obtained ($\ref{eq40}$), we get
\begin{gather*}
  \mu:=\frac{1}{ms} \int_{\R^N} \left(N F(u)-
    \frac{N-2s}{2}f(u)u \right)\, dx >0.
\end{gather*}
Since $u_n \rightharpoonup u$ in $\Hr$, by Lemma \ref{lemma23}
\begin{gather*}
\int_{\R^N} \left[ f(u_n)-f(u) \right]u \, dx \to 0.
\end{gather*}
Indeed, the fractional Sobolev inequality implies that
$u \in L^{\frac{2N}{N-2s}}(\R^N)$, and the multiplication by \(u\)
turns out to be a continuous linear operator from
\(L^{\frac{2N}{N+2s}}(\R^N)\) into \(L^1(\mathbb{R}^N)\).  Now,
observing that $\int_{\R^N}f(u_n) (u_n-u) \, dx \to 0$ by Lemma \ref{lemma1}
$(iii)$ we get
\begin{gather*}
\lim_{n \to +\infty} \int_{\R^N} f(u_n)u_n \, dx= \int_{\R^N} f(u)u \, dx.
\end{gather*}
Finally, from \eqref{eq41} and \eqref{eq42} one has
\begin{multline*}
\left[u\right]_{\Hs}^2+ \mu \int_{\R^N} u^2 \, dx  = \int_{\R^N}f(u)u \, dx \\
 = \lim_{n \to +\infty} \int_{\R^N} f(u_n) u_n \, dx = \lim_{n \to +\infty} \left[ u_n \right]_{\Hs}^2+\mu m,
\end{multline*}
and since $\mu >0$,
\begin{gather*}
\lim_{n \to +\infty} \left[ u_n \right]_{\Hs}^2=\left[u\right]_{\Hs}^2, \quad \lim_{n \to +\infty} \int_{\R^N}u_n^2 \, dx=m=\int_{\R^N} u^2 \, dx.
\end{gather*}
Thus $u_n \to u$ in $\Hr$.
\end{proof}
\begin{lemma} \label{lemma21} For any $c>0$, there exists
  $\beta=\beta(c)>0$ and $k(c) \in \N$ such that for any $k \geq k(c)$
  and any $u \in \p_m \cap \Hr$
\begin{gather*}
  \|\pi_k u\|_{H^s(\mathbb{R}^N)} \leq \beta \quad\hbox{implies}\quad
  I(u)\geq c .
\end{gather*} 
\end{lemma}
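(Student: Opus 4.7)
The plan is a proof by contradiction. Assume the statement fails for some fixed $c>0$; then one can extract sequences $k_n \to +\infty$ and $u_n \in \Pm \cap \Hr$ with $\|\pi_{k_n} u_n\|_{\Hs} \to 0$ and $I(u_n) < c$. By the coercivity of $I$ on $\Pm$ from Lemma \ref{lemma5}$(iv)$, this bound forces $(u_n)_n$ to be bounded in $\Hr$, so, up to a subsequence, $u_n \rightharpoonup u$ weakly in $\Hr$.

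The crucial step is to identify $u = 0$. Fix $k \in \N$. For $n$ large enough one has $k_n \geq k$ and hence $V_k \subset V_{k_n}$, so $\pi_k u_n = \pi_k(\pi_{k_n} u_n)$ and $\|\pi_k u_n\|_{\Hs} \to 0$. Since $V_k$ is finite-dimensional, $\pi_k$ is compact, and the weak convergence $u_n \rightharpoonup u$ upgrades to strong convergence $\pi_k u_n \to \pi_k u$ in $\Hs$. Consequently $\pi_k u = 0$ for every $k$, and the density of $\bigcup_k V_k$ in $\Hr$ yields $u = 0$.

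Having identified the weak limit, I would invoke the Strauss-type compact embedding $\Hr \hookrightarrow L^p(\R^N)$ for $p \in (2, 2^*_s)$, in particular for $p = 2 + 4s/N$, to deduce $u_n \to 0$ strongly in $L^{2+4s/N}(\R^N)$. Lemma \ref{lemma1}$(ii)$ then provides $\int_{\R^N} F(u_n)\, dx \to 0$ and $\int_{\R^N} \tilde F(u_n)\, dx \to 0$; the latter, together with $P(u_n) = 0$, gives $[u_n]_{\Hs}^2 = \frac{N}{2s}\int_{\R^N} \tilde F(u_n)\, dx \to 0$. Hence $I(u_n) \to 0$, in contradiction with $I(u_n) \geq \inf_{\Pm} I > 0$ furnished by Lemma \ref{lemma5}$(iii)$.

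The delicate point is the vanishing of the weak limit $u$: since each $u_n$ lies on the $L^2$-sphere of radius $\sqrt{m}$, the statement $u=0$ is \emph{not} about $L^2$ convergence but relies on matching the $n$-dependent smallness $\|\pi_{k_n} u_n\|_{\Hs}\to 0$ with the \emph{strong} convergence of $(\pi_k u_n)_n$ in each fixed finite-dimensional $V_k$; the nested structure $V_k \subset V_{k_n}$ is exactly what makes the two pieces of information compatible. Once $u=0$ is secured, the compact radial embedding together with Lemma \ref{lemma1}$(ii)$ and the Pohozaev equality $P(u_n)=0$ carries out the rest mechanically.
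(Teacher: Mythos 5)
Your proof is correct and follows the same overall strategy as the paper: argue by contradiction to extract $(k_j)_j$ with $k_j\to\infty$ and $(u_j)_j\subset\Pm\cap\Hr$ with $\|\pi_{k_j}u_j\|_{\Hs}\to 0$ and $I(u_j)<c$, obtain boundedness from coercivity (Lemma~\ref{lemma5}$(iv)$), show the weak limit is zero, and then combine the compact radial embedding, Lemma~\ref{lemma1}$(ii)$, and $P(u_j)=0$ to drive $[u_j]_{\Hs}^2\to 0$, contradicting Lemma~\ref{lemma5}.

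The one step where you diverge is the identification $u=0$. The paper pairs $\pi_{k_j}u_j$ against the weak limit $u$ and moves the projection across the inner product, using $\pi_{k_j}u\to u$; you instead fix $k$, observe that the nesting $V_k\subset V_{k_j}$ together with $\|\pi_k\|\le 1$ gives $\|\pi_k u_j\|_{\Hs}\le\|\pi_{k_j}u_j\|_{\Hs}\to 0$, and then pass to the limit through the compact finite-rank operator $\pi_k$ to conclude $\pi_k u=0$ for all $k$, hence $u=0$ by density. Both routes work, but yours is arguably the more robust bookkeeping: the paper's adjointness identity $(\pi_{k_j}u_j,u)=(u_j,\pi_{k_j}u)$ is valid for the $H^s$ inner product (since $\pi_k$ is defined as the $\Hs$-orthogonal projection onto $V_k$), yet the paper writes it with $L^2$ subscripts, which is a slip; your finite-dimensional compactness argument sidesteps that issue entirely. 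Finally, you close with $I(u_j)\to 0$ against $\inf_{\Pm}I>0$ (Lemma~\ref{lemma5}$(iii)$) while the paper uses $\inf_{\Pm}[\cdot]_{\Hs}>0$ (Lemma~\ref{lemma5}$(ii)$); these are interchangeable here.
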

\begin{proof}
  By contradiction, we assume that there exists $c_0$ such that for
  any $\beta>0$ and any $k \in \N$ it is possible to find
  $\ell \geq k$ and $u \in \mathcal{P}_m \cap \Hr$ such that
\begin{gather*}
I(u)<c_0 \quad \hbox{with} \, \, \, \Vert \pi_k u \Vert_{\Hs} \leq \beta.
\end{gather*}
In view of that, one can find a sequence $(k_j)_j \subset \N$, with
$k_j \to \infty$ as $j \to \infty$, and a sequence
$(u_j)_j \subset \p_m \cap \Hr$ such that
\begin{equation} \label{eq43} \Vert \pi_{k_j}u_j \Vert_{\Hs}\leq
  \frac{1}{j} \quad \hbox{and} \quad I(u_j)<c_0
\end{equation}
for any $j \in \N$. Noticing that by Lemma \ref{lemma5} $(iv)$
$(u_j)_j$ is bounded, up to a subsequence we have
$u_j \rightharpoonup u$ in $\Hr$ and $L^2(\R^N)$.

\textit{Claim:} $u=0$.

Since $k_j \to \infty$, it follows that $\pi_{k_j}u \to u$ in $L^2(\R^N)$, hence
\begin{gather*}
(\pi_{k_j}u_j,u)_{L^2(\R^N)}=(u_j,\pi_{k_j}u)_{L^2(\R^N)}\to (u,u)_{L^2(\R^N)}
\end{gather*}
 as $ j \to \infty$.
\end{proof}
On the other hand, using \eqref{eq43} we get $\pi_{k_j}u_j \to 0$ in
$L^2(\R^N)$, thus the claim must hold. Now, since
$\Vert u_j \Vert_{L^{2+\frac{4s}{N}}(\R^N)} \to 0$ by the compact
embedding, $(u_j)_j \subset \p_m \cap \Hr$, and Lemma \ref{lemma1}
$(ii)$, we obtain
\begin{gather*}
\left[ u_j \right]_{\Hs}^2=\frac{N}{2s} \int_{\R^N}\tilde{F}(u_j) \, dx \to 0
\end{gather*}
as $j \to \infty$, which contradicts Lemma \ref{lemma5} $(ii)$.
\begin{lemma} \label{lemma22}
$E_{m,k} \to \infty$ as $k \to +\infty$.
\end{lemma}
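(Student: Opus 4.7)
The plan is to combine Lemma \ref{lemma21} with the Borsuk--Ulam-type vanishing property of the Krasnoselskii genus. Fix an arbitrary $c > 0$ and let $\beta = \beta(c) > 0$ and $k(c) \in \N$ be provided by Lemma \ref{lemma21}; it will be enough to show that $E_{m,k} \geq c$ for every $k \geq k(c) + 1$, since $c$ is arbitrary.

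I would first introduce the scaling map $T\colon S_m \cap H^s_r(\R^N) \to \mathcal{P}_m \cap H^s_r(\R^N)$ defined by $T(u) = \rho(u) * u$. By Lemma \ref{lemma4}(i) one has $T(u) \in \mathcal{P}_m$; the scaling $(\rho*u)(x) = e^{N\rho/2} u(e^\rho x)$ preserves both the $L^2$-norm and radial symmetry, so $T$ is well defined. Lemma \ref{lemma4}(iii) yields the continuity of $T$, while Lemma \ref{lemma4}(iv) together with the identity $\rho*(-u) = -(\rho*u)$ shows that $T$ is odd. Fixing now an arbitrary $A \in \mathcal{G}_k$, the image $T(A)$ is a compact $\sigma$-invariant subset of $\mathcal{P}_m \cap H^s_r(\R^N)$, and composing any continuous odd map $\phi\colon T(A) \to \R^n \setminus \{0\}$ with $T$ produces a continuous odd map from $A$ into $\R^n\setminus \{0\}$; therefore $\gamma(T(A)) \geq \gamma(A) \geq k$.

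The decisive step is then to produce some $v \in T(A)$ with $\pi_{k-1}(v) = 0$. Since $\pi_{k-1}\colon T(A) \to V_{k-1}$ is continuous and odd and $\dim V_{k-1} = k-1 < k \leq \gamma(T(A))$, were $\pi_{k-1}$ nowhere vanishing on $T(A)$ the standard genus inequality would give $\gamma(T(A)) \leq k-1$, a contradiction. Hence such a $v$ exists, and writing $v = \rho(u)*u$ with $u \in A$ we have $\|\pi_{k-1} v\|_{H^s(\R^N)} = 0 \leq \beta$, with $k-1 \geq k(c)$, so Lemma \ref{lemma21} yields $J(u) = I(\rho(u)*u) = I(v) \geq c$. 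Consequently $\max_{u \in A} J(u) \geq c$, and since $A \in \mathcal{G}_k$ was arbitrary, $E_{m,k} \geq c$, which completes the argument upon letting $c \to +\infty$.

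I do not expect any serious obstacle: the only point requiring a little care is the genus transfer $\gamma(T(A)) \geq \gamma(A)$ and the verification that $T$ is continuous, odd and takes values in $\mathcal{P}_m \cap H^s_r(\R^N)$, but these are immediate consequences of Lemma \ref{lemma4} once one recalls that the scaling $\rho * \cdot$ commutes with $u \mapsto -u$ and with radial symmetry.
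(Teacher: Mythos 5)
Your proof is correct, and it rests on exactly the same two ingredients as the paper's: Lemma~\ref{lemma21} and the genus inequality $\gamma(\varphi(A))\geq\gamma(A)$ for the odd continuous map $\varphi(u)=\rho(u)*u$. The only difference is presentational: the paper argues by contradiction (assuming $\max_{\overline A}I<c$, deducing via Lemma~\ref{lemma21} that $\pi_{k(c)}$ is nowhere vanishing on $\overline A$, then normalizing to get $\gamma(\overline A)\leq k(c)$), whereas you argue directly, noting that $\gamma(T(A))\geq k>k-1=\dim V_{k-1}$ forces $\pi_{k-1}$ to vanish at some $v\in T(A)$, to which Lemma~\ref{lemma21} applies immediately. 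This is the same Borsuk--Ulam mechanism phrased in its contrapositive form, and it yields a slightly cleaner exposition; the mathematical content is identical.
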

\begin{proof}
We assume by contradiction that there exists $c>0$ such that 
\begin{gather*}
\liminf_{k \to +\infty} E_{m,k}<c.
\end{gather*}
Denote with $\beta(c)$ and $k(c)$ the numbers given in Lemma
\ref{lemma21}. Up to choose a bigger $c$, we can find $k>k(c)$ such
that $E_{m,k}<c$. Moreover, by definition of $E_{m,k}$ there must be
$A \in \mathcal{G}_k$ such that
\begin{gather*}
\max_{u \in A}I(\rho(u)*u)=\max_{u \in A}J(u)<c.
\end{gather*}
Now, recalling Lemma \ref{lemma4} $(iii)$ and $(iv)$ we get that the map $\varphi:A \to \p_m\cap \Hr$ defined by $\varphi(u)=\rho(u) * u$ is odd and continuous. Thus, setting $\overline{A}:=\varphi(A)\subset \p_m \cap \Hr$ we have
\begin{gather*}
\max_{v \in \overline{A}}I(v)<c
\end{gather*}
and
\begin{equation} \label{eq44}
\gamma(\overline{A})\geq \gamma(A) \geq k > k(c)
\end{equation}
by the properties of the genus. On the other hand, Lemma \ref{lemma21} implies that
\begin{gather*}
\inf_{v \in \overline{A}} \Vert \pi_{k(c)}v \Vert_{\Hs}\geq \beta(c)>0,
\end{gather*}
and after setting 
\begin{gather*}
  \phi(v):=\frac{\pi_{k(c)}v}{\Vert \pi_{k(c)} v \Vert_{\Hs}} \quad
  \hbox{for any} \, v \in \overline{A}
\end{gather*}
  we get
\begin{gather*}
\gamma(\overline{A})\leq \gamma(\phi(\overline{A})) \leq k(c)
\end{gather*}
noticing that $\phi$ is odd, continuous and that
$\phi(\overline{A}) \subset V_{k(c)}$.  That is against
\eqref{eq44}. Therefore $E_{m,k} \to \infty$ as $k \to +\infty$.
\end{proof}
\begin{proof}[Proof of Theorem \ref{th3}]
  For each $k \in \N$, by Lemmas \ref{lemma18} and \ref{lemma19} one
  can find a Palais-Smale sequence $(u_n)_n \subset \p_m \cap \Hr$ of
  the constrained functional $I_{|S_m \cap \Hr}$ at level
  $E_{m,k}>0$. By Lemma \ref{lemma5} $(u_n)_n$ is bounded and by
  virtue of Lemma \ref{lemma20} we deduce that $(P_m)$ has a radial
  solution $u_k$ such that $I(u_k)=E_{m,k}$. Moreover, using Lemma
  \ref{lemma19} $(ii)$ and Lemma \ref{lemma22}, we get
\begin{gather*}
  I(u_{k+1})\geq I(u_k)>0 \quad \hbox{for any $k \geq 1$}
\end{gather*}
and $I(u_k) \to \infty$.
\end{proof}

\section{Appendix}

\begin{proof}[Proof of Lemma \ref{lemma1}]
  $(i)$ It suffices to show that there exists $\delta>0$ such that
  \begin{gather*}
    \int_{\R^N} |F(u)| \, dx \leq \frac{1}{4} \left[ u \right]_{\Hs}^2
  \end{gather*}
  whenever $u \in B_m$ and $\left[ u \right]_{\Hs} \leq \delta$. In
  order to show that, we start noticing that $(f_0)$, $(f_1)$, and
  $(f_2)$ imply that for every $\varepsilon >0$ we can find
  $C_1=C_1(\varepsilon)>0$ such that
  \begin{equation} \label{eq1} |F(u)|\leq \varepsilon
    |t|^{2+\frac{4s}{N}}+C_1|t|^{\frac{2N}{N-2s}}.
  \end{equation}
  Hence, by \eqref{eq1}, using the interpolation inequality and the
  fractional Sobolev inequality (see for instance \cite[Theorem
  6.5]{MR2944369}), we get
\begin{align*}
  \int_{\R^N} |F(u)| \, dx & \leq \varepsilon \int_{\R^N} |u|^{2+\frac{4s}{N}} \, dx + C_1 \int_{\R^N} |u|^{\frac{2N}{N-2s}} \, dx \leq \varepsilon m^{\frac{2s}{N}} \Vert u \Vert_{L^{2^*_s}(\R^N)}^2+C_1\Vert u \Vert_{L^{2^*_s}(\R^N)}^{2^*_s}  \\
                           &\leq  \varepsilon m^{\frac{2s}{N}}C_1\left[ u \right]_{\Hs}^2 + C_2 \left[ u \right]_{\Hs}^{2^*_s} 
                             = \left[ \varepsilon m^{\frac{2s}{N}}C_1 + C_2 \left[ u \right]_{\Hs}^{2^*_s-2} \right] \left[ u\right]_{\Hs}^2.
\end{align*}
Choosing
\begin{gather*}
  \varepsilon = \frac{1}{8m^{\frac{2s}{N}}C_1} \quad \hbox{and} \quad
  \delta=\left( \frac{1}{C_2} \right)^{\frac{1}{2^*_s-2}}
\end{gather*}
the assertion is verified.

$(ii)$ Since $(f_0)$, $(f_1)$ and $(f_2)$ hold, for every
$\varepsilon >0$ there exists $C_3, \, C_4>0$ such that
\begin{gather*}
  |f(t)t| \leq
  \frac{\varepsilon}{2}|t|^{\frac{2N}{N-2s}}+C_3|t|^{2+\frac{4s}{N}}
\end{gather*}
and
\begin{gather*}
  |F(t)| \leq
  \frac{\varepsilon}{2}|t|^{\frac{2N}{N-2s}}+C_4|t|^{2+\frac{4s}{N}},
\end{gather*}
which implies
\begin{equation} \label{eq2}
  |\tilde{F}(t)| \leq
  \varepsilon|t|^{\frac{2N}{N-2s}}+\left(C_3+C_4\right)|t|^{2+\frac{4s}{N}}.
\end{equation}
By \eqref{eq2} we have
\begin{align*}
  \int_{\R^N} |\tilde{F}(u_n)| \, dx & \leq \varepsilon \int_{\R^N} |u_n|^{\frac{2N}{N-2s}} \, dx + \int_{\R^N} |u_n|^{2+ \frac{4s}{N}} \, dx \\
                                     & \leq \varepsilon C_5 \left[ u_n \right]_{\Hs}^{\frac{2N}{N-2s}} + \left(C_3+C_4\right] \Vert u_n \Vert_{L^{2+\frac{4s}{N}}(\R^N)}^{2+\frac{4s}{N}} \\
                                     & \leq \varepsilon C_6  + \left(C_3+C_4\right) \Vert u_n \Vert_{L^{2+\frac{4s}{N}}(\R^N)}^{2+\frac{4s}{N}}\to 0
\end{align*}
as $n\to +\infty$ and $\varepsilon \to 0$. The proof of \(\lim_{n \to +\infty} \int_{\mathbb{R}^N} |F(u_n)|\, dx =0\) is similarl.

$(iii)$. $(f_0)$, $(f_1)$ and $(f_2)$ imply that for every $\varepsilon >0$ we can find $C_7>0$ such that 
\begin{equation}\label{eq3}
|f(t)| \leq \varepsilon |t|^{\frac{N+2s}{N-2s}}+C_7|t|^{1+\frac{4s}{N}}.
\end{equation}
Hence, by \eqref{eq3}, we obtain that
\begin{align*}
  \int_{\R^N} |f(u_n)||v_n| \, dx & \leq \varepsilon \int_{\R^N}|u_n|^{\frac{N+2s}{N-2s}}|v_n| \, dx + C_7 \int_{\R^N} |u_n|^{1+\frac{4s}{N}}|v_n|\, dx \\
                                  &\leq \varepsilon \Vert u_n \Vert_{L^{2^*_s}(\R^N)}^{\frac{N+2s}{2N}}\Vert v_n \Vert_{L^{2^*_s}(\R^N)}^{\frac{N-2s}{2N}}+C_7 \Vert u_n \Vert_{L^{2+\frac{4s}{N}}(\R^N)}^{\frac{N+4s}{2(N+2s)}}\Vert v_n \Vert_{L^{2+\frac{4s}{N}}(\R^N)}^{\frac{N}{2(N+2s)}} \\
                                  & \leq \varepsilon C_8 \Vert u_n \Vert_{\Hs}^{\frac{N+2s}{2N}} \Vert v_n \Vert_{\Hs}^{\frac{N-2s}{2N}}+C_9 \Vert u_n \Vert_{\Hs}^{\frac{N+4s}{2(N+2s)}}\Vert v_n \Vert_{L^{2+\frac{4s}{N}}(\R^N)}^{\frac{N}{2(N+2s)}}\\
                                  & \leq \varepsilon C_{10} + C_{11}\Vert v_n \Vert_{L^{2+\frac{4s}{N}}(\R^N)}^{\frac{N}{2(N+2s)}} \to 0
\end{align*}
as $n\to +\infty$ and $\varepsilon \to 0$. This completes the proof of
the Lemma.
\end{proof}

\begin{proof}[Proof of Lemma \ref{lemma2}]
  $(i)$ Let us fix $m:=\Vert u \Vert_{\ldue}^2$. We observe that
  $\rho * u \in S_m$ and after a change of variables we obtain
\begin{gather*}
  \left[ \rho * u
  \right]_{\Hs}^2=\int_{\R^{2N}}\frac{e^{N\rho}(u(x)-u(y))^2}{|x-y|^{N+2s}}
  \, dx\, dy = e^{2\rho s} \left[ u\right]_{\Hs}^2.
\end{gather*}
By virtue of the previous computation, choosing $\rho \ll -1$, Lemma
\ref{lemma1} $(i)$ guarantees the existence of a $\delta >0$ such that
if $\left[\rho * u \right]_{\Hs} \leq \delta$ then
\begin{gather*}
  \frac{1}{4}e^{2 \rho s} \left[ u \right]_{\Hs}^2\leq I(\rho * u)
  \leq e^{2 \rho s} \left[ u \right]_{\Hs}^2,
\end{gather*}
thus 
\begin{gather*}
\lim_{\rho \to - \infty} I(\rho * u)=0^+.
\end{gather*}

$(ii)$ For every $\lambda \geq 0$ we define the function
$h_{\lambda}\colon \R \to \R$ as follows
\begin{equation}\label{eq4}
h_{\lambda}(t)=
\begin{cases}
\frac{F(t)}{|t|^{2+\frac{4s}{N}}}+\lambda & t \neq 0 \\
\lambda & t = 0.
\end{cases}
\end{equation}
It is straightforward to verify that
$F(t)=h_{\lambda}(t)|t|^{2+\frac{4s}{N}}-\lambda|t|^{2+\frac{4s}{N}}$. Moreover,
from $(f_0)$ and $(f_1)$ it follows that $h_\lambda$ is continuous,
whereas thanks to $(f_3)$ we have
\begin{gather*}
h_{\lambda}(t) \to +\infty \quad \hbox{as} \quad t\to +\infty.
\end{gather*}
Putting together the divergence of the limit above at infinity and
$(f_1)$, we can find $\lambda>0$ large enough such that
$h_{\lambda}(t) \geq 0$ for every $t \in \R$. Now, applying the well
known Fatou's Lemma, we obtain
\begin{gather*}
  \liminf_{\rho \to \infty} \int_{\R^N} h_{\lambda} (e^{\frac{N
      \rho}{2}}u)|u|^{2+\frac{4s}{N}}\, dx \geq \int_{\R^N}\lim_{\rho
    \to \infty} h_{\lambda}(e^{\frac{N
      \rho}{2}}u)|u|^{2+\frac{4s}{N}}\, dx= \infty.
\end{gather*}
Then, we observe that
\begin{align} \label{eq10} I(\rho *u) & = \frac{1}{2} \left[ \rho *u
  \right]_{\Hs}^2 + \lambda \int_{\R^N} |\rho *u|^{2+ \frac{4s}{N}} \,
  dx- \int_{\R^N}h_{\lambda}(\rho *u)|\rho *u|^{2+ \frac{4s}{N}} \, dx
  \\ \notag & =  e^{2\rho s}\left[\frac{1}{2} \left[ u \right]_{\Hs}^2
    + \lambda \int_{\R^N} |u|^{2+ \frac{4s}{N}} \, dx-
    \int_{\R^N}h_{\lambda}(e^{\frac{N \rho}{2}}u)|u|^{2+ \frac{4s}{N}} \, dx\right],
\end{align}
from which it follows immediately that
\begin{gather*}
  \lim_{\rho \to \infty} I(\rho * u )=- \infty.
\end{gather*}
\end{proof}

\bibliographystyle{plain}
\bibliography{prescribedmass}
\end{document}